\documentclass[a4paper]{amsart}
\usepackage{graphicx}
\usepackage{amssymb}
\usepackage{amsmath}
\usepackage{amsthm}
\usepackage{amscd,url}
\usepackage[all,2cell]{xy}

\UseAllTwocells \SilentMatrices
\newtheorem{thm}{Theorem}[section]

\newtheorem{cor}[thm]{Corollary}

\newtheorem{lem}[thm]{Lemma}
\newtheorem{exm}[thm]{Example}

\newtheorem{prop}[thm]{Proposition}

\theoremstyle{definition}
\newtheorem{defn}[thm]{Definition}
\theoremstyle{remark}

\numberwithin{equation}{section}

\begin{document}
\title[$\tau$-tilting modules, depth and  delooping level]{$\tau$-tilting modules, depth and delooping level}
\author[Xu and Zhang] {Mingfei Xu and Xiaojin Zhang$^*$}

\makeatletter
\@namedef{subjclassname@2020}{\textup{2020} Mathematics Subject Classification}
\makeatother

\thanks{$^*$ The corresponding author}
\subjclass[2020]{16G10}
\thanks{Email: mfxu123@163.com, xjzhang@jsnu.edu.cn}
\keywords{$\tau$-tilting module, depth, delooping level, finitistic dimension}

\maketitle

\dedicatory{}%
\commby{}%
\begin{abstract}
Let $A$ be a finite-dimensional basic algebra over an algebraically closed field $K$, $T$ a finitely generated support $\tau$-tilting right $A$-module and $B={\rm End}_A T$. Denote by ${\rm Fac}T$ the subcategory of finitely generated right $A$-modules generated by $T$. We define the depth relative to $T$ and the delooping level relative to $T$ and show that the finitistic dimension of the opposite algebra of $B$ is bounded by the depth of $\textup{Fac}T$ relative to $T$ and the delooping level of $\textup{Fac}T$ relative to $T$ whenever $T$ is self-orthogonal. We give applications to the finitistic dimension conjecture. More precisely, we show that if $A$ is an algebra of finite representation type, then the finitistic dimension of $B^{op}$ is finite.
\end{abstract}

\section{Introduction}

In 2014, Adachi, Iyama and Reiten \cite{AIR} introduced $\tau$-tilting theory as a generalization of tilting theory from the viewpoint of mutation. From then on, $\tau$-tilting theory became popular in the representation theory of finite-dimensional algebras. In the $\tau$-tilting theory, support $\tau$-tilting modules are the most important objects. For support $\tau$-tilting modules over special algebras, we refer to \cite{A, M, DIJ, IZ} and the references therein. For the infinitely generated case, that is, the silting modules, we refer to \cite{AnMV}. For the connection between $\tau$-tilting modules and homological conjectures, we refer to \cite{CLZZ, Z}.

For a finite-dimensional algebra $A$, the finitistic dimension of $A$ is the supremum of the finite projective dimension of finitely generated modules. It is conjectured that the finitistic dimension of $A$ is always finite. This is the famous finitistic dimension conjecture which is still open now. For the study of the finitistic conjecture, we refer to \cite{GPS, IT, K, PX, Xi2} and the references therein.  In 2022, Gelinas \cite{G} introduced a new homological notion delooping level to study the finitistic dimension. And he proved the following theorem \cite[Proposition 1.3]{G}.

 \begin{thm}\label{1.1} For a finite-dimensional algebra $A$, the finitistic dimension of $A^{op}$ is bounded by the depth of $A$ and the delooping level of $A$. That is, the depth of $A$ $\leq$ the finitistic dimension of $A^{op}$ $\leq$ the delooping level of $A$.
 \end{thm}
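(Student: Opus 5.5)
The plan is to prove the two inequalities separately, using Gelinas' definitions. Recall them for a right $A$-module: the depth of $A$ is the least $d$ with ${\rm Ext}^d_A(A/{\rm rad}A, A)\neq 0$. The delooping level of a module $M$ is the least $m$ such that $\Omega^m M$ is a stable direct summand of $\Omega^{m+1}N$ for some module $N$, and the delooping level of $A$ is the maximum of the delooping levels of the simple right $A$-modules. Throughout I would use the dualities $D={\rm Hom}_K(-,K)$ and $(-)^*={\rm Hom}_A(-,A)$ between right and left modules.

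\emph{Lower bound.} Let $d$ be the depth of $A$; if $d=0$ there is nothing to prove, so assume $d\ge 1$. Choose a simple right module $S$ with ${\rm Ext}^d_A(S,A)\neq 0$ and ${\rm Ext}^i_A(S,A)=0$ for $i<d$. I take a minimal projective resolution $P_d\to\cdots\to P_0\to S\to 0$ and apply $(-)^*$. Since ${\rm Hom}_A(S,A)={\rm Ext}^0_A(S,A)=0$ and the lower Ext groups vanish, the complex
\[0\to P_0^*\to P_1^*\to\cdots\to P_d^*\to X\to 0\]
is exact, where $X$ is the cokernel of $P_{d-1}^*\to P_d^*$. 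Hence $X$ is a left $A$-module of projective dimension at most $d$. To see that it is exactly $d$, I would use that the dual of a minimal resolution is still a complex of maps with image in the radical. So this is a minimal projective resolution of $X$, and its length-$d$ term $P_0^*$ is nonzero. Alternatively, ${\rm Ext}^d(X,A^{op})$ recovers $P_0$ modulo the image of $P_1$, which is nonzero. Either way the projective dimension of $X$ equals $d$, so $d$ is at most the finitistic dimension of $A^{op}$.

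\emph{Upper bound.} Let $M$ be a left module of finite projective dimension $n\ge 1$. By minimality of its projective resolution, ${\rm Ext}^n_{A^{op}}(M,S')\neq 0$ for some simple left module $S'$. Writing $S'=DS$ with $S$ a simple right module, the duality ${\rm Tor}^A_n(S,M)\cong D\,{\rm Ext}^n_{A^{op}}(M,DS)$ gives ${\rm Tor}^A_n(S,M)\neq 0$. Suppose, for a contradiction, that the delooping level $m$ of $S$ satisfies $m<n$. Then $\Omega^m S\oplus P\cong \Omega^{m+1}N\oplus Q$ with $P,Q$ projective. Since $n-m\ge 1$, dimension shifting kills the projective summands and gives
\[{\rm Tor}_n(S,M)\cong{\rm Tor}_{n-m}(\Omega^m S,M),\]
which is a direct summand of
\[{\rm Tor}_{n-m}(\Omega^{m+1}N,M)\cong{\rm Tor}_{n+1}(N,M)=0.\]
This contradiction shows $n\le m$, which is at most the delooping level of $A$. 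Taking the supremum over all such $M$ gives the upper bound.

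\emph{Main obstacle.} The delicate step is the exactness bookkeeping in the lower bound. One must check that $X$ has projective dimension exactly $d$ and not merely at most $d$; this is where the minimality of the dualized resolution (maps landing in the radical) is needed. Everything in the upper bound is formal once one passes from ${\rm Ext}$ into simples to ${\rm Tor}$ against simples via $D$.
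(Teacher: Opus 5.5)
Your two-part strategy is the one the paper relies on. The paper quotes Theorem~\ref{1.1} from Gelinas, but its own proofs specialize to your arguments: the lower bound is the case $T=A$ of Theorem~\ref{3.3} (dualizing a minimal resolution via Lemma~\ref{3.a}, giving Corollary~\ref{3.4}), and the upper bound is the Tor dimension shift in the proof of Theorem~\ref{4.8}. Your upper bound is correct, with one small slip. A stable retract means $\Omega^mS\oplus Y\oplus P\cong\Omega^{m+1}N\oplus Q$ for some module $Y$, not $\Omega^mS\oplus P\cong\Omega^{m+1}N\oplus Q$. Since ${\rm Tor}_j(-,M)$ vanishes on projectives for $j\geq 1$, your summand argument goes through unchanged.

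The genuine problem is your definition of depth. The least $d$ with ${\rm Ext}^d_A(A/{\rm rad}A,A)\neq 0$ is the minimum of the grades of the simple modules. This is always $0$, because ${\rm Hom}_A(A/{\rm rad}A,A)\cong{\rm soc}(A_A)\neq 0$. So your case $d\geq 1$ never occurs, and as written you have only proved $0\leq{\rm findim}A^{op}$.

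The paper, following Gelinas, defines ${\rm depth}A$ as the supremum of ${\rm grade}\,S$ over all simple $S$. You therefore need ${\rm grade}\,S\leq{\rm findim}A^{op}$ for every simple $S$. This includes the case ${\rm grade}\,S=\infty$, where you must show ${\rm findim}A^{op}=\infty$; this is case (1) in the proof of Theorem~\ref{3.3}.

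Your construction already delivers this once you notice that it only uses ${\rm Ext}^i_A(S,A)=0$ for $0\leq i<d$, never ${\rm Ext}^d_A(S,A)\neq 0$. For any simple $S$ and any finite $d\leq{\rm grade}\,S$, it yields a left module $X$ with ${\rm pd}X\leq d$ and ${\rm Ext}^d_{A^{op}}(X,A)\cong S\neq 0$, hence ${\rm pd}X=d$. Take $d={\rm grade}\,S$ when the grade is finite, and $d$ arbitrary when it is infinite. Then take the supremum over all simples to obtain the stated inequality.
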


 If the delooping level of $A$ is finite for any algebra $A$, then by Theorem \ref{1.1} the finitistic dimension conjecture is proved. Unfortunately,  Kershaw and Rickard \cite{KR} constructed a counter-example for this. Moreover, Sen \cite{S} computed the delooping level for Nakayama algebras. In 2025, Guo and Igusa \cite{GuI} introduced the definitions of $k$-delooping level and derived delooping level and gave a more precise bound for the finitistic dimension. Later Guo \cite{Gu} showed that the derived delooping level is left and right symmetric. Chen and Hu \cite{CH} studied delooping level and derived delooping level in terms of functions. Recently, Chen \cite{C} showed that the delooping level is not preserved under derived equivalence.

Since $A$ is always a support $\tau$-tilting module in $\text{mod}A$, it is natural to ask: Is there a generalized version of Theorem \ref{1.1} in terms of support $\tau$-tilting modules? In this paper, we give a positive answer to the question and then apply our result to the finitistic dimension conjecture.

In what follows, let $A$ be a finite-dimensional algebra over an algebraically closed field $K$ and let $T\in \text{mod} A$ be a support $\tau$-tilting module with $B=\text{End}_A T$. Denote by $\mathcal{C}=\text{Fac}T=T(S)=\text{Filt(Fac} S)$, where $S=S_1\bigoplus \cdots \bigoplus S_m$ is the left finite semi-brick in \cite[Theorem 2.3(2)]{As} and $S_i$ is a brick. We first define the depth of $\mathcal{C}$ relative to $T$ (See Definition \ref{3.1} for details) and then prove the following result.

\begin{thm}\label{1.2}
 Let $A, T, B$ and $\mathcal{C}$ be as above and let $T$ be self-orthogonal. Then the depth of $\mathcal{C}$ relative to $T$ $\leq$ the finitistic dimension of $B^{op}$.
\end{thm}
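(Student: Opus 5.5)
The plan is to imitate Gelinas' proof of the inequality $\textup{depth}\,A\le \textup{fin.dim}\,A^{op}$ in Theorem \ref{1.1}. The idea is to replace projective $A$-modules by modules in $\textup{add}\,T$ and the functor $\textup{Hom}_A(-,A)$ by $\textup{Hom}_A(-,T)$, which takes values in $\textup{mod}\,B^{op}$. I will take Definition \ref{3.1} in the following form. The depth of $\mathcal{C}$ relative to $T$ is the least $n$ such that $\textup{Ext}^n(S_i,T)\neq 0$ for some brick $S_i$ of the semi-brick $S$. Here the Ext is computed from a (minimal) $\textup{add}\,T$-resolution
\[
\cdots\to T_2\xrightarrow{f_2} T_1\xrightarrow{f_1} T_0\xrightarrow{f_0} S_i\to 0,
\]
in which each map onto its image is a right $\textup{add}\,T$-approximation inside $\mathcal{C}=\textup{Fac}\,T$. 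If the depth is $0$ or infinite there is nothing to prove, so I assume it equals some $n$ with $1\le n<\infty$, attained at a brick $S_i$.

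The first step is to check that $\textup{Hom}_A(-,T)$ sends $\textup{add}\,T$ into $\textup{proj}\,B^{op}$. This holds because $\textup{Hom}_A(T,T)=B$ as a left $B$-module, and a direct summand of $T$ goes to a direct summand of $B$. Applying $\textup{Hom}_A(-,T)$ to the resolution then gives a complex of finitely generated projective $B^{op}$-modules
\[
0\to \textup{Hom}_A(S_i,T)\to P_0\to P_1\to\cdots\to P_n\to\cdots,\qquad P_j=\textup{Hom}_A(T_j,T).
\]
Its cohomology at $P_j$ is the relative $\textup{Ext}^j(S_i,T)$. By the choice of $n$, this complex is exact at $P_j$ for $1\le j<n$ and not exact at $P_n$.

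For the second step, assume first that $\textup{Hom}_A(S_i,T)=0$, which is the case $n\ge 1$. Then $0\to P_0\to\cdots\to P_{n-1}\to P_n\to M\to 0$ is a projective resolution of $M=\textup{Coker}(P_{n-1}\to P_n)$, so $\textup{pd}_{B^{op}}M\le n$. To get equality I will show $\textup{Ext}^n_{B^{op}}(M,B)\neq 0$. Since $\textup{Hom}_{B^{op}}(-,B)$ applied to $P_j$ recovers $T_j$ by $\textup{Hom}_{B^{op}}(\textup{Hom}_A(T_j,T),B)\cong T_j$ for $T_j\in\textup{add}\,T$, this Ext group is computed by the original $\textup{add}\,T$-complex. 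It becomes the cokernel of $T_1\to T_0$, which is $S_i\neq 0$ if the resolution is exact in low degrees. More precisely, I will check that $\textup{Ext}^n_{B^{op}}(M,B)$ surjects onto $S_i$, or at least is nonzero. This gives $\textup{fin.dim}\,B^{op}\ge \textup{pd}\,M=n$.

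The main obstacle is making the $\textup{add}\,T$-resolution well behaved. A kernel of a right $\textup{add}\,T$-approximation of a module in $\textup{Fac}\,T$ need not lie in $\textup{Fac}\,T$; Wakamatsu's lemma only gives $\textup{Ext}^1_A(T,K)=0$. The hypothesis that $T$ is self-orthogonal, that is, $\textup{Ext}^{k}_A(T,T)=0$ for all $k\ge 1$, is where I expect to use the real input. Combined with $T$ being Ext-projective in $\textup{Fac}\,T$ (from \cite{AIR}), it should let me compare the relative Ext with $\textup{Ext}^j_A(S_i,T)$ by dimension shifting along the short exact sequences $0\to K_{j+1}\to T_j\to K_j\to 0$. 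It should also ensure that the double-dual identification above is compatible with the cohomology. If the kernels leave $\textup{Fac}\,T$, I would instead work with the torsion parts $tK_j$ and replace $K_j$ by its maximal submodule in $\textup{Fac}\,T$. This should not change $\textup{Hom}_A(-,T)$-cohomology in the relevant degrees, again by self-orthogonality.
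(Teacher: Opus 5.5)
Your route is the paper's: apply $\mathrm{Hom}_A(-,T)$ to a minimal $\mathrm{add}\,T$-resolution of a brick of grade $n$ and take $M=\mathrm{Coker}(P_{n-1}\to P_n)$. There is, however, a genuine gap in how you dispose of the cases.

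By Definition \ref{3.1}, $\mathrm{depth}_T\mathcal{C}$ is the \emph{supremum} of $\mathrm{grade}_T S_i$ over the bricks. It is not the least $n$ with some $\mathrm{Ext}^n_A(S_i,T)\neq 0$. When it is infinite there is not ``nothing to prove'': the inequality then says $\mathrm{findim}\,B^{op}=\infty$. This is exactly the paper's case (1), a brick $S_i$ with $\mathrm{Ext}^j_A(S_i,T)=0$ for all $j\ge 0$, and your proposal never treats it. Under your reading of depth as a minimum, such a brick is invisible whenever another brick has finite grade. The repair is your own construction used at every level. For such $S_i$, the complex $0\to P_0\to\cdots\to P_n\to M_n\to 0$ with $M_n=\mathrm{Coker}(P_{n-1}\to P_n)$ is a projective resolution for every $n$. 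The corrected top-Ext computation below then gives $\mathrm{pd}\,M_n=n$ for all $n$. In the finite case, run the argument for a brick attaining the supremum.

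Second, the key computation is mis-stated. For $X\in\mathrm{add}\,T$ one has $\mathrm{Hom}_{B^{op}}(\mathrm{Hom}_A(X,T),B)\cong\mathrm{Hom}_A(T,X)$, a projective right $B$-module, not $X$; you recover $X$ only by dualizing into ${}_BT$. Hence $\mathrm{Ext}^n_{B^{op}}(M,B)\cong\mathrm{Coker}\,\mathrm{Hom}_A(T,f_1)\cong\mathrm{Hom}_A(T,S_i)$. This is nonzero because $S_i\in\mathrm{Fac}\,T$ and the $f_j$ are $\mathrm{add}\,T$-approximations. Alternatively, $\mathrm{Ext}^n_{B^{op}}(M,{}_BT)\cong\mathrm{Coker}\,f_1=S_i\neq 0$ forces $\mathrm{pd}\,M\ge n$ just as well.

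With this correction, your certificate for $\mathrm{pd}\,M=n$ (a nonvanishing top Ext) is a valid alternative to the paper's. The paper instead uses Lemma \ref{3.a} to see that the truncated complex is a minimal projective resolution, whose length is therefore the projective dimension. Your version does not need minimality.

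Finally, your ``main obstacle'' is not one. By Proposition \ref{2.6}, the kernels do lie in $\mathrm{Fac}\,T$. In any case, comparing your relative Ext with $\mathrm{Ext}^j_A(S_i,T)$ only needs the dimension shift along $0\to K_{j+1}\to T_j\to K_j\to 0$ (with $K_0=S_i$ and $K_j=\mathrm{Im}\,f_j$ for $j\ge 1$). That shift gives $\mathrm{Ext}^1_A(K_j,T)\cong\mathrm{Coker}(\mathrm{Hom}_A(T_j,T)\to\mathrm{Hom}_A(K_{j+1},T))$ and $\mathrm{Ext}^{k+1}_A(K_j,T)\cong\mathrm{Ext}^k_A(K_{j+1},T)$ for $k\ge 1$. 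It uses only $\mathrm{Ext}^{\ge 1}_A(T,T)=0$, so the torsion-part workaround is unnecessary.
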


For a module $M\in {\rm mod}A$, we use ${\rm Sub}M$ to denote the subcategory cogenerated by $M$. We give the following bijection between bricks in $\mathcal{C}$ and simple modules in $\text{Sub}\mathbb{D}T$, which is partially shown in \cite{As}.

\begin{thm}\label{1.3}
 Let $A, T, S, S_i$ and $B$ be as above. Then the functor
 $$\textup{Hom}_A(T, -): \textup{Fac}T \to \textup{Sub}\mathbb{D}T$$
  induces a bijection between $\{S_1, \dots, S_m\}$ and $\{\text{simple B-modules in \textup{Sub}}\mathbb{D}T\}$.
\end{thm}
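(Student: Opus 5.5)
Throughout, $F={\rm Hom}_A(T,-)$. The plan has four steps: reduce to the sincere (tilting) case, use the Brenner--Butler equivalence, show each $F(S_i)$ is simple, and then show there are no other simples in ${\rm Sub}\,\mathbb{D}T$.

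\textbf{Reduction and equivalence.} First, replacing $A$ by $A/\langle e\rangle$, where $e$ is the idempotent with $(T,eA)$ a support $\tau$-tilting pair, we may assume $T$ is a sincere $\tau$-tilting module. Then $T$ is a tilting module over $\bar A=A/{\rm ann}(T)$, and ${\rm Fac}\,T$ is unchanged because ${\rm Fac}\,T\subseteq{\rm mod}\,\bar A$. Over $\bar A$ the classical Brenner--Butler theorem applies. It says that $F$ restricts to an equivalence from the torsion class ${\rm Fac}\,T$ to the torsion-free class $\mathcal{Y}={\rm Ker}\,{\rm Ext}^1_B(-,T)$ in ${\rm mod}\,B$. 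This class $\mathcal{Y}$ should coincide with ${\rm Sub}\,\mathbb{D}T$ (as $B$-modules, where $\mathbb{D}T={\rm Hom}_{\bar A}(T,\mathbb{D}\bar A)$). To check this, note that $\mathcal{Y}$ is closed under submodules and contains $F(\mathbb{D}\bar A)=\mathbb{D}T$, which gives ${\rm Sub}\,\mathbb{D}T\subseteq\mathcal{Y}$. Conversely, every $F(M)$ with $M\in{\rm Fac}\,T$ embeds into $F(I)$ for an injective hull $M\hookrightarrow I$ in ${\rm mod}\,\bar A$, and $F(I)\in{\rm add}\,\mathbb{D}T$ because $F$ is left exact.

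\textbf{Images of the bricks are simple.} Next I show that each $F(S_i)$ is a simple $B$-module. Since $F$ is an equivalence on ${\rm Fac}\,T$ and ${\rm Sub}\,\mathbb{D}T$ is closed under submodules, any nonzero proper submodule $N\subset F(S_i)$ lies in ${\rm Sub}\,\mathbb{D}T$. So $N\cong F(X)$ for some $X\in{\rm Fac}\,T$, and the inclusion equals $F(f)$ for a map $f\colon X\to S_i$. This $f$ is nonzero, and it is not an isomorphism because $N$ is a proper submodule.

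The key step is Asai's characterisation of the semibrick: $S_i$ is exactly the top of the torsion class, i.e. $S=T/{\rm rad}_{\mathcal{C}}T$. In particular every nonzero map from an object of ${\rm Fac}\,T$ to $S_i$ is surjective, since its image lies in ${\rm Fac}\,T$ and $S_i$ is minimal in the filtration sense. So $f$ is an epimorphism; let $K=\ker f$. Because $F$ is left exact, $F(K)$ is the kernel of $F(f)$, and $F(f)$ is injective, so $F(K)=0$. Since $T$ generates ${\rm Fac}\,T$, this forces $K\notin{\rm Fac}\,T$ unless $K=0$.

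I expect this to be the main obstacle: the kernel $K$ need not lie in ${\rm Fac}\,T$. I would handle it through the torsion pair $({\rm Fac}\,T,T^{\perp})$. The condition $F(K)=0$ says exactly that $K\in T^{\perp}$. Then the sequence $0\to K\to X\to S_i\to 0$ with $X,S_i$ torsion and $K$ torsion-free gives, on applying $F$ and using ${\rm Ext}^1_A(T,K)$, that $F(X)\to F(S_i)$ is not surjective; this is consistent with $N$ being proper. To get the contradiction I would instead use the characterisation in \cite{As} that $F(S_i)$ is the simple top of the indecomposable projective $F(T_i)$. This characterisation should follow from the fact that ${\rm Hom}_A(T_j,S_i)$ is $K$ if $j=i$ and $0$ otherwise after a suitable indexing, given that $S$ is the set of bricks labelling the minimal mutations. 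Then $\dim F(S_i)=\sum_j\dim{\rm Hom}(T_j,S_i)=1$, so $F(S_i)$ is simple. I would use this dimension count as the actual proof of simplicity.

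\textbf{Injectivity and surjectivity.} Injectivity of $S_i\mapsto F(S_i)$ follows because $F$ is faithful on ${\rm Fac}\,T$ and the $S_i$ are pairwise non-isomorphic. For surjectivity, let $U$ be a simple $B$-module in ${\rm Sub}\,\mathbb{D}T$. By the equivalence, $U\cong F(Y)$ for some $Y\in{\rm Fac}\,T$, and $Y$ is a brick since ${\rm End}\,U=K$. The object $Y$ has a filtration by the $S_j$'s, so there is a nonzero map $Y\to S_j$ for some $j$. Applying $F$ gives a nonzero map $U\to F(S_j)$ between simples, which is therefore an isomorphism; hence $Y\cong S_j$ by the equivalence. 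Finally, $m=|T|$, the number of indecomposable summands of $T$, matches the number of simple $B$-modules. This gives a consistency check that all simples of $B$ lie in ${\rm Sub}\,\mathbb{D}T$ in the sincere case.
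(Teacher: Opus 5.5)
\textbf{Simplicity of $F(S_i)$.} The step you flag as the main obstacle is not actually an obstacle, and your workaround leaves the simplicity of $F(S_i)$ unproved. Proposition~\ref{2.8} (Asai's Lemma 2.7(1)) says that a nonzero $f\colon X\to S_i$ with $X\in T(S)=\mathrm{Fac}\,T$ is surjective \emph{and} has $\ker f\in T(S)$. This holds for any semibrick, by induction on the length of a $\mathrm{Fac}\,S$-filtration of $X$. So your $K$ does lie in $\mathrm{Fac}\,T$, and it satisfies $F(K)=0$. Hence $K=0$, $f$ is an isomorphism, and $N=F(S_i)$, contradicting properness.

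This is essentially the paper's proof. The paper routes the argument through the projective cover $F(T_i)\to F(S_i)$ and the snake lemma, but the decisive input is the same kernel statement: it makes $0\to F(Q)\to F(M)\to F(S_i)\to 0$ exact, because $\mathrm{Ext}^1_A(T,Q)=0$.

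Instead, you fall back on $\dim_K\mathrm{Hom}_A(T_j,S_i)=\delta_{ij}$, which you say ``should follow'' but never prove. This is equivalent to $F(S_i)$ being the simple top of $F(T_i)$, so it is the claim restated. It can be proved. Write $S_i=T_i/R_i$ with $R_i=\sum_{g\in\mathrm{rad}(T,T_i)}\mathrm{Im}\,g\in\mathrm{Fac}\,T$. Every map $T_j\to S_i$ lifts to $T_j\to T_i$ since $\mathrm{Ext}^1_A(T,R_i)=0$, and radical maps land in $R_i$. One of these two arguments has to be supplied.

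\textbf{Surjectivity and the closing remark.} In the surjectivity step, $Y$ is filtered by objects of $\mathrm{Fac}\,S$, not by the $S_j$ themselves, so a nonzero map $Y\to S_j$ does not follow. For instance, in Example~\ref{3.2} the simple module $1$ lies in $\mathrm{Fac}\,T$ but has no nonzero map to either brick $3$ or $\substack{1\\2}$. Use the bottom term of the filtration instead. It gives a nonzero map $S_j\to Y$, so $F(S_j)\to U$ is a nonzero map between simple modules, hence an isomorphism. Then $S_j\cong Y$ by the equivalence; this is what the paper does.

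Finally, your closing ``consistency check'' is false. Only summands $T_i\notin\mathrm{Fac}(T/T_i)$ produce a nonzero brick, so $m$ can be smaller than $|T|$. In Example~\ref{3.2}, $T$ is sincere with $|T|=3$ but $m=2$. By Corollary~\ref{4.3}, every simple $B$-module lies in $\mathrm{Sub}\,\mathbb{D}T$ only when $T\cong A$. The remark is not used in your bijection, but it rests on a misreading of how $S$ relates to $T$ and should be dropped.
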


By using Theorem \ref{1.3}, we are able to define the $k$-delooping level of $\mathcal{C}$ (See Definition \ref{4.1} for details) and give the following theorem.

\begin{thm}\label{1.4}
   Let $A, T, S, S_i$ and $ B$ be as above. Let $k$ be a positive integer. Then
\begin{itemize}
    \item
    [(1)] The $k$-delooping level of $B$ $\leq$ the $k$-delooping level of $\mathcal{C}$ relative to $T$.
    \item
    [(2)] The finitistic dimension of $B^{op}$ $\leq$ the delooping level of $\mathcal{C}$ relative to $T$.
\end{itemize}
\end{thm}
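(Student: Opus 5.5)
We have to guess the definitions. Gelinas's setup: for $M\in{\rm mod}A$, ${\rm dell}\,M$ is the least $n$ such that $\Omega^n M$ is a stable retract of $\Omega^{n+1}N$ for some $N$; ${\rm dell}A=\sup$ over simple modules. The $k$-delooping level uses $\Omega^{n+k}N$ instead of $\Omega^{n+1}N$. Presumably Definition \ref{4.1} sets ${k\text{-}\rm dell}_T\,\mathcal{C}$ to be the supremum over the bricks $S_i$ of the least $n$ such that $\Omega_T^n S_i$ is a stable retract of $\Omega_T^{n+k}N$ for some $N\in\mathcal{C}$. Here $\Omega_T$ is the relative syzygy, the kernel of a right ${\rm add}T$-approximation $T_0\to X$, which stays in $\mathcal{C}={\rm Fac}T$ because $T$ is Ext-projective in ${\rm Fac}T$.

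The plan is to transport everything along $F={\rm Hom}_A(T,-)\colon \mathcal{C}\to{\rm Sub}\mathbb{D}T\subseteq{\rm mod}B$. Four facts are needed.

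\begin{itemize}
\item[(a)] $F$ is fully faithful on $\mathcal{C}$, with ${\rm Ext}^1_A(T,\mathcal{C})=0$. This is the standard $\tau$-tilting/torsion theory equivalence, via Brenner--Butler for the tilting module $T$ over $A/{\rm ann}T$.
\item[(b)] $F$ sends ${\rm add}T$ onto ${\rm proj}B$.
\item[(c)] $F$ sends a right ${\rm add}T$-approximation sequence $0\to\Omega_TX\to T_0\to X\to 0$ in $\mathcal{C}$ to a projective cover type exact sequence $0\to F\Omega_TX\to FT_0\to FX\to 0$. Exactness on the right uses ${\rm Ext}^1_A(T,\Omega_TX)=0$, which holds since $\Omega_TX\in\mathcal{C}$. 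Hence $F\Omega_T X\cong \Omega_B FX$ up to projective summands, and inductively $F\Omega_T^n X\simeq\Omega_B^n FX$ stably.
\item[(d)] $F$ preserves direct sums and split retractions, so a stable retraction $\Omega_T^nS_i \mid \Omega_T^{n+k}N\oplus T'$ maps to a stable retraction $\Omega^n_B FS_i\mid \Omega_B^{n+k}FN\oplus P$.
\end{itemize}

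For part (1), Theorem \ref{1.3} says the $FS_i$ are exactly the simple $B$-modules lying in ${\rm Sub}\mathbb{D}T$, so those simples satisfy $k\text{-}{\rm dell}\,FS_i\le k\text{-}{\rm dell}_T S_i$. The remaining simple $B$-modules are the ones not cogenerated by $\mathbb{D}T$, and they need a separate argument.

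\emph{This is where I expect the main obstacle.} For such a simple $S'$ I would show $k$-${\rm dell}\,S'=0$, i.e.\ that $S'$ is a stable retract of some $k$-th syzygy. My first attempt uses the torsion pair $({\rm Sub}\mathbb{D}T$-complement$)$ on ${\rm mod}B$ induced by the tilting theory. Since $T$ is tilting over $\bar A=A/{\rm ann}T$, the $B$-modules split into $F(\mathcal{C})$ and ${\rm Ext}^1_{\bar A}(T,{\rm mod}\bar A)$. A simple $S'$ in the latter class is ${\rm Ext}^1(T,Y)$. One then uses a coresolution of $Y$ to write $S'$ as a syzygy-like cokernel, which places $S'$ inside $\Omega_B$ of something. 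Iterating this $k$ times may require further care. If it fails, the definition in the paper presumably already takes the supremum only over the simples in ${\rm Sub}\mathbb{D}T$, and Gelinas-type bounds for the other simples follow because they have projective dimension at most $1$, or are first syzygies.

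For part (2), take $k=1$ in part (1) and apply Theorem \ref{1.1} to $B$:
$$\operatorname{fin.dim}B^{op}\le{\rm dell}\,B\le{\rm dell}_T\,\mathcal{C}.$$
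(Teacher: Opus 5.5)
\textbf{There is a genuine gap.} Your facts (a)--(d) are the paper's argument for the simples $D_i\cong\mathrm{Hom}_A(T,S_i)$, and your part (2) is the paper's. The gap is in part (1), for the simple $B$-modules outside ${\rm Sub}\mathbb{D}T$.

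\textbf{The definition is not what you guessed.} Definition~\ref{4.1}(2) takes the supremum not only of the terms $k$-${\rm dell}_T S_i$ for $1\le i\le m$. It also includes the terms $(k+1)$-${\rm dell}_T N_j+1$ for $m+1\le j\le n$, where $\mathrm{Hom}_A(T,N_j)\cong\mathrm{rad}\,\mathrm{Hom}_A(T,T_j)$. Here the top $D_j$ of $\mathrm{Hom}_A(T,T_j)$ is one of the simples not in ${\rm Sub}\mathbb{D}T$.

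\textbf{Both of your treatments of these simples fail.} Take the paper's example: $A$ has arrows $\alpha\colon 1\to 2$ and $\beta\colon 2\to 1$ with $\alpha\beta\alpha=\beta\alpha\beta=0$, and $T=P(1)\oplus S(1)$.
\begin{itemize}
\item[(i)] $B$ is Nakayama with indecomposable projectives $\mathrm{Hom}_A(T,P(1))=\substack{1\\2\\1}$ and $\mathrm{Hom}_A(T,S(1))=\substack{2\\1}$.
\item[(ii)] The only brick is $S_1=\substack{1\\2}$, and $\mathrm{Hom}_A(T,S_1)=D_1$. The other simple, $D_2$, lies outside ${\rm Sub}\mathbb{D}T$.
\item[(iii)] Both projectives have socle $D_1$, so $D_2$ is not a submodule of a projective. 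Hence ${\rm dell}\,D_2\neq0$ and $D_2$ is not a first syzygy.
\item[(iv)] One has $\Omega D_2=D_1$ and ${\rm pd}\,D_2=2$. Every second syzygy of a $B$-module is projective, so ${\rm dell}\,D_2=2$.
\end{itemize}
So $D_2$ has neither delooping level $0$ nor projective dimension $\le1$, and it is not a first syzygy. With your ``bricks only'' definition you would get ${\rm dell}_T\mathcal{C}={\rm dell}_TS_1=1$. But ${\rm dell}\,B=2={\rm gldim}\,B={\rm findim}\,B^{op}$, so both (1) and (2) would be false. Your idea of writing $D_2\in\mathcal{X}(T)$ as ${\rm Ext}^1_{\bar A}(T,Y)$ and coresolving cannot work.

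\textbf{The missing idea is to spend one syzygy to land in ${\rm Sub}\mathbb{D}T$.}
\begin{itemize}
\item[(i)] For such a simple, $\Omega_BD_j=\mathrm{rad}\,\mathrm{Hom}_A(T,T_j)$.
\item[(ii)] This is a submodule of a module in the torsion-free class ${\rm Sub}\mathbb{D}T$, so it lies in ${\rm Sub}\mathbb{D}T$. Hence it has the form $\mathrm{Hom}_A(T,N_j)$ with $N_j\in\mathcal{C}$.
\item[(iii)] Suppose $(k+1)$-${\rm dell}_TN_j=d$, so $\Omega_T^dN_j$ is a stable retract of $\Omega_T^{d+k+1}L$ for some $L\in\mathcal{C}$. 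Your facts (c) and (d) then show that $\Omega_B^{d+1}D_j=\Omega_B^{d}\mathrm{Hom}_A(T,N_j)$ is a stable retract of $\Omega_B^{(d+1)+k}\mathrm{Hom}_A(T,L)$.
\item[(iv)] Therefore $k$-${\rm dell}\,D_j\le d+1$. The shift from $k$ to $k+1$ is forced by the syzygy used up.
\end{itemize}
In the example, $N_2=S_1$ and $2$-${\rm dell}_TS_1+1=2$, which is exactly the needed bound. This bound, together with your bound for the $D_i\cong\mathrm{Hom}_A(T,S_i)$, gives (1), and (2) then follows as you say.
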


As a corollary of Theorem \ref{1.2} and Theorem \ref{1.4}, we get that a generalized version of Theorem \ref{1.1} via self-orthogonal support $\tau$-tilting modules (resp. tilting modules).

It is natural to ask whether our Theorem \ref{1.4} can be used for the study of the finitistic dimension conjecture. That is, which class of support $\tau$-tilting modules $T$ with $B={\rm End}_A T$ admits the property that the finitistic dimension of $B^{op}$ is finite?

 We give a positive answer to the question above and prove the following result.

\begin{thm}\label{1.5} Let $A$ be an algebra, $T\in {\rm mod}A$ a support $\tau$-tilting module and $B={\rm End}_A T$. If ${\rm Fac}T\cap {\rm Sub}T$ is of finite representation type, then the finitistic dimension of $B^{op}$ is finite.
  \end{thm}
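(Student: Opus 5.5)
The plan is to deduce the statement from Theorem \ref{1.4}(2) rather than from the relative theory directly. Since $\mathrm{fin.dim}\,B^{op}\le \mathrm{dell}\,B$ by Theorem \ref{1.1} applied to $B$, it suffices to show that the ordinary delooping level of $B$ is finite. Theorem \ref{1.4}(1) with its $k$-version can be used alternatively. The key input is that, because $\mathrm{Fac}T\cap\mathrm{Sub}T$ has only finitely many indecomposables, the indecomposable summands of high syzygies of $B$-modules form a finite set.

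\textbf{Step 1.} Use the equivalence $\mathrm{Hom}_A(T,-)\colon \mathrm{Fac}T\to \mathrm{Sub}\,\mathbb{D}T$, with quasi-inverse $-\otimes_B T$, which underlies Theorem \ref{1.3}. Recall that $T$ is a tilting module over $A/\mathrm{ann}\,T$, so this is the Brenner--Butler equivalence between the torsion class and the torsion-free class $\{Y : \mathrm{Tor}_1^B(Y,T)=0\}=\mathrm{Sub}\,\mathbb{D}T$.

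The indecomposable projective $B$-modules are $\mathrm{Hom}_A(T,T_i)$, which lie in $\mathrm{Sub}\,\mathbb{D}T$. This class is closed under submodules, so $\Omega_B M\in\mathrm{Sub}\,\mathbb{D}T$ for every $M\in\mathrm{mod}B$.

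Now take a projective presentation $0\to\Omega^2_BM\to P_1\to \Omega^1_BM\to 0$. All three terms lie in $\mathrm{Sub}\,\mathbb{D}T$, and $\mathrm{Tor}_1^B(\Omega^1_BM,T)=0$. Therefore applying $-\otimes_BT$ gives an exact sequence
$$0\to \Omega^2_BM\otimes_BT\to T^{m}\to \Omega^1_BM\otimes_BT\to 0 .$$
Hence $\Omega^2_BM\cong\mathrm{Hom}_A(T,X)$ with $X\in \mathrm{Fac}T\cap\mathrm{Sub}T$. By the hypothesis, the indecomposable summands of all second syzygies $\Omega^{\ge 2}_B M$ belong to a fixed finite set $\mathcal{N}$ of $B$-modules.

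\textbf{Step 2.} For $d\ge 2$, let $\mathcal{N}_d\subseteq\mathcal{N}$ be the set of non-projective indecomposable summands of modules of the form $\Omega^d_B M$, with $M$ ranging over $\mathrm{mod}B$. Since $\Omega^{d+1}_BM=\Omega^d_B(\Omega_BM)$, we have $\mathcal{N}_{d+1}\subseteq\mathcal{N}_d$. This is a decreasing chain of finite sets, so it stabilizes: there is $d$ with $\mathcal{N}_d=\mathcal{N}_{d+1}$.

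Then for each simple $B$-module $S$, every non-projective indecomposable summand $Z$ of $\Omega^d_BS$ is a summand of some $\Omega^{d+1}_BN_Z$. Taking $N=\bigoplus_Z N_Z$, the module $\Omega^d_BS$ is a stable retract of $\Omega^{d+1}_BN$. By Gelinas's definition this gives $\mathrm{dell}\,S\le d$, uniformly in $S$. Hence $\mathrm{dell}\,B\le d<\infty$, and so $\mathrm{fin.dim}\,B^{op}\le d$.

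\textbf{Main obstacle.} The delicate point is Step 1, namely justifying that $\Omega^2_BM$ corresponds to an object of $\mathrm{Sub}T$ and not merely of $\mathrm{Fac}T$. This requires the Tor-vanishing on $\mathrm{Sub}\,\mathbb{D}T$ for the tilting module $T$ over $A/\mathrm{ann}\,T$. One must also check that finite representation type of $\mathrm{Fac}T\cap\mathrm{Sub}T$ transfers to finitely many isoclasses of indecomposable $B$-modules under the equivalence. This holds because the equivalence preserves indecomposability.
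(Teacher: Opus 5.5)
Your proof is correct, but it takes a different route from the paper's.

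\textbf{The paper's route.} The paper stays inside $\mathcal{C}={\rm Fac}T$. It identifies ${\rm Fac}T\cap{\rm Sub}T$ with ${\rm add}L_1$, the closure of the first relative syzygies. It lets the chain $L_1\supseteq L_2\supseteq\cdots$ stabilise to get finiteness of the global $2$-delooping level, and hence of ${\rm dell}_T\mathcal{C}$ (Lemmas 5.1 and 5.2). Only then does it pass to $B$ via Theorem 4.5. That theorem depends on the brick--simple bijection of Theorem 4.2, and on a separate treatment of the simple $B$-modules outside ${\rm Sub}\,\mathbb{D}T$ through the modules $N_j$ and the shifted $(k+1)$-delooping level.

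\textbf{Your route.} You bypass all of this relative machinery. You never actually use Theorem 1.4, so your opening sentence announcing a deduction from Theorem 1.4(2) misdescribes your own argument. Instead you show directly that the second syzygies of $B$-modules have only finitely many indecomposable summands, and then apply Gelinas' bound to $B$ itself.

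The cost is one extra syzygy, because an arbitrary $B$-module need not lie in $\mathcal{Y}(T)={\rm Sub}\,\mathbb{D}T$. The two arguments are in fact related by the equivalence. If $\Omega_BY\cong{\rm Hom}_A(T,M)$ with $M\in{\rm Fac}T$, then $\Omega^2_BY\cong{\rm Hom}_A(T,K)$, where $K$ is the kernel of a minimal right ${\rm add}T$-approximation of $M$. So your finite set $\mathcal{N}$ is essentially the image of $L_1$ under ${\rm Hom}_A(T,-)$.

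\textbf{What each approach buys.} Your version is shorter and treats all simple $B$-modules uniformly. It also gives the cleaner statement that $B$ is $2$-syzygy-finite, which yields ${\rm findim}B<\infty$ as well by the classical syzygy-finite argument. The paper's version gives finiteness of the relative invariant ${\rm dell}_T\mathcal{C}$, computed in ${\rm mod}A$, which is what its framework is designed to produce.

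\textbf{Two harmless slips.} First, $P_1\otimes_BT$ lies in ${\rm add}T$ rather than being $T^m$; it is still a submodule of some $T^m$, which is all you need. Second, in $N=\bigoplus_ZN_Z$ the summands $Z$ should be counted with multiplicity.
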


As an immediate consequence of Theorem \ref{1.5}, we can get the following corollaries.

\begin{cor}\label{1.6} Let $A$ be an algebra of finite representation type and $T\in {\rm mod}A$ a support $\tau$-tilting module and $B={\rm End}_A T$. Then the finitistic dimension of $B^{op}$ is finite.
\end{cor}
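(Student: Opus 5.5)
The corollary should follow directly from Theorem \ref{1.5}. All that needs checking is that its hypothesis holds: when $A$ is of finite representation type, the subcategory ${\rm Fac}T\cap {\rm Sub}T$ is of finite representation type for every support $\tau$-tilting module $T\in{\rm mod}A$.

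First I fix the meaning of ``${\rm Fac}T\cap {\rm Sub}T$ is of finite representation type''. The natural reading is that this full subcategory of ${\rm mod}A$ contains only finitely many indecomposable modules up to isomorphism. This is the notion Theorem \ref{1.5} should require, since its proof presumably bounds delooping levels or projective dimensions over finitely many indecomposables.

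Next I observe that ${\rm Fac}T$ and ${\rm Sub}T$ are full subcategories of ${\rm mod}A$ closed under isomorphism. Both are also closed under direct summands: a summand of a quotient of $T^n$ is again a quotient of $T^n$, and a summand of a submodule of $T^n$ is again a submodule of $T^n$. Hence ${\rm Fac}T\cap {\rm Sub}T$ is closed under isomorphisms and summands, so its objects are exactly the finite direct sums of indecomposable $A$-modules lying in it. Every such indecomposable is, in particular, an indecomposable $A$-module.

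Since $A$ has only finitely many isoclasses of indecomposable modules, only finitely many can lie in ${\rm Fac}T\cap {\rm Sub}T$. The subcategory is therefore of finite representation type, and Theorem \ref{1.5} gives ${\rm fin.dim}\,B^{op}<\infty$ with $B={\rm End}_AT$.

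The only possible obstacle is a mismatch between this notion of finite type and the one the paper uses in Theorem \ref{1.5}. If the paper instead means that ${\rm add}M={\rm Fac}T\cap{\rm Sub}T$ for some module $M$, I would take $M$ to be the direct sum of the finitely many indecomposables lying in the intersection, which works because the intersection is closed under finite sums and summands.
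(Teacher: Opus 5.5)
Your proposal is correct and is the paper's argument: the paper records this corollary as an immediate consequence of Theorem~\ref{1.5} (also of Corollary~\ref{5.4}, since even ${\rm Fac}T$ has only finitely many indecomposables when $A$ is representation-finite). Your reading of ``finite representation type'' is the right one, because the proof of Theorem~\ref{5.3} only uses finiteness of the set $L_1$ of indecomposables in $\Omega^1_T(\mathcal{C})\subseteq {\rm Fac}T\cap{\rm Sub}T$; your remarks on closure under summands are harmless but not needed.
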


\begin{cor}\label{1.7} Let $A$ be a minimal representation infinite algebra, $T\in {\rm mod}A$ a support $\tau$-tilting but not a tilting module and $B={\rm End}_A T$. Then ${\rm findim}B^{op}<\infty$.
\end{cor}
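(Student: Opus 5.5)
The plan is to reduce Corollary \ref{1.7} to Theorem \ref{1.5}: it suffices to show that ${\rm Fac}T\cap{\rm Sub}T$ is of finite representation type. Since $A$ is minimal representation infinite, every proper quotient $A/I$ with $I\neq 0$ is representation finite. So I will show that ${\rm ann}_A T\neq 0$, and that both ${\rm Fac}T$ and ${\rm Sub}T$ lie inside ${\rm mod}\,A/{\rm ann}_A T$.

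\textbf{Step 1: a faithful support $\tau$-tilting module is tilting.} Let $T$ be support $\tau$-tilting, so $T$ is a $\tau$-tilting $A/\langle e\rangle$-module for some idempotent $e$. If $T$ is faithful, then $Te=0$ forces $e$ to annihilate $A$ acting on $T$, so $e=0$. Hence $T$ is $\tau$-tilting with $|T|=|A|$. By \cite[Proposition 2.2]{AIR}, a faithful $\tau$-rigid module has projective dimension at most one and satisfies ${\rm Ext}^1_A(T,T)=0$, so $T$ is partial tilting. Having $|A|$ nonisomorphic indecomposable summands, $T$ is then tilting. Contrapositively, since $T$ is not tilting in our situation, $I:={\rm ann}_A T\neq 0$.

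\textbf{Step 2: the relevant categories live over $A/I$.} Every module in ${\rm Fac}T$ is a quotient of some $T^n$, so it is annihilated by $I$. Every module in ${\rm Sub}T$ is a submodule of some $T^n$, so it is likewise annihilated by $I$. Thus ${\rm Fac}T\cap{\rm Sub}T\subseteq{\rm mod}\,A/I$, and its indecomposables are indecomposable $A/I$-modules. Because $I\neq 0$ and $A$ is minimal representation infinite, $A/I$ is representation finite. Hence ${\rm Fac}T\cap{\rm Sub}T$ has only finitely many indecomposables up to isomorphism, and Theorem \ref{1.5} gives ${\rm findim}B^{op}<\infty$.

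The main point requiring care is Step 1, namely correctly invoking that faithful $\tau$-rigid modules are partial tilting, and checking that faithfulness rules out a nonzero support idempotent $e$. Everything else is formal once Theorem \ref{1.5} is available.
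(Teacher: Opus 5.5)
Your proof is correct, but it is not the argument the paper gives for Corollary~\ref{5.7}.

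\textbf{The paper's route.} It goes through Proposition~\ref{5.6}. Since $T$ is tilting over $\bar{A}=A/{\rm ann}T$, the algebras $\bar{A}$ and $B$ are derived equivalent \cite{R}. Because $\bar{A}$, and hence $\bar{A}^{op}$, is representation-finite, ${\rm findim}\bar{A}^{op}<\infty$. This finiteness then transfers to $B^{op}$ by Pan--Xi \cite{PX}.

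\textbf{Your route.} You feed the representation-finiteness of $\bar{A}$ into Theorem~\ref{1.5}. This is exactly how the introduction advertises the corollary, as an ``immediate consequence of Theorem~\ref{1.5}''. In fact ${\rm Fac}T\subseteq{\rm mod}\,\bar{A}$ alone suffices, so Corollary~\ref{5.4} is enough and your remark about ${\rm Sub}T$ is not needed.

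\textbf{Your Step~1.} Both routes need ${\rm ann}_AT\neq 0$, so that $\bar{A}$ is a proper factor algebra and hence representation-finite. The paper simply asserts that $\bar{A}$ is of finite representation type without justifying this. You close that gap: a faithful support $\tau$-tilting module has $e=0$ and is therefore tilting \cite[Proposition 2.2]{AIR}.

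\textbf{What each approach buys.} The paper's proof rests only on standard external theorems and yields more. It gives the equivalence ${\rm findim}B^{op}<\infty\Leftrightarrow{\rm findim}\bar{A}^{op}<\infty$, and the same derived equivalence also gives ${\rm findim}B<\infty$. Your proof presents the corollary as an instance of the paper's relative delooping bound. Its validity, however, depends on the longer internal chain: Theorem~\ref{4.2}, Theorem~\ref{4.5}, Lemmas~\ref{5.1}--\ref{5.2}, and Gelinas' inequality \cite{G}.
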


The paper is organized as follows: In Section 2, we recall the preliminaries on tilting modules, $\tau$-tilting modules, torsion pairs and bricks. In Section 3, we introduce the depth of modules relative to a $\tau$-tilting module $T$ and prove Theorem \ref{1.2}. In Section 4, we first prove Theorem \ref{1.3} and then introduce the definition of $k$-delooping level of $\mathcal{C}$ relative to a $\tau$-tilting module $T$ and prove Theorem \ref{1.4}. Then we give examples to illustrate Theorem \ref{1.4}. In Section 5, we give applications of Theorem \ref{1.4} to the finitistic dimension conjecture and show Theorem \ref{1.5}, Corollary \ref{1.6}  and Corollary \ref{1.7}.

Throughout the paper, we assume that all algebras are finite-dimensional basic algebras over an algebraically closed field $K$ and all modules are finitely generated right modules. Denote by $\mathbb{D}$ the ordinary duality and $\tau$ the Auslander-Reiten translation functor.

\section{Preliminaries}

 In this section we recall some basic facts on tilting modules, $\tau$-tilting modules and bricks for later use.

We first recall the definition of torsion pairs from \cite{ASS}.
\begin{defn}\label{2.1}Let $A$ be an algebra. A pair $(\mathcal{T}, \mathcal{F})$ of full subcategories of $\textup{mod}A$ is called a torsion pair if the following conditions are satisfied:
\begin{itemize}

  \item [(1)]  $\textup{Hom}_A(M, N)=0$ for all $M \in \mathcal{T}$, $N \in \mathcal{F}$.

 \item [(2)] $\textup{Hom}_A(M, -)|_\mathcal{F}=0$ implies $M \in \mathcal{T}$.

 \item [(3)] $\textup{Hom}_A(-, N)|_\mathcal{T}=0$ implies $N \in \mathcal{F}$.
\end{itemize}
\end{defn}

For a module $M$, denote by $|M|$ the number of indecomposable direct summands up to isomorphisms. Denote by ${\rm pd}_A M$ the projective dimension of $M$. We recall the definition of tilting modules from \cite{HR}.

\begin{defn}\label{2.2} Let $A$ be an algebra. A module $T\in{\rm mod} A$ is called a tilting module if the following three conditions are satisfied:
\begin{itemize}
  \item[(1)]${\rm pd}_A T\leq1$.
 \item[(2)] $ \text{Ext}^1_A(T,T)=0$.
 \item[(3)] $|T|=|A|$.
\end{itemize}
\end{defn}

We also need the definition of a tilting torsion pair from \cite{ASS}.

\begin{prop}\label{2.3}
Let $A$ be an algebra and $T\in\textup{mod}A$. Then any tilting module $T$ induces a torsion pair $(\mathcal{X}(T_A), \mathcal{Y}(T_A))$ in the category \textup{mod}B, where $B = \textup{End}_A T$ and
$$\mathcal{X}(T_A)=\{X_B|\textup{Hom}_B(X,\mathbb{D}T)=0\}=\{X_B|X \otimes_B T=0\},$$
$$\mathcal{Y}(T_A)=\{Y_B|\textup{Ext}_B^1(Y,\mathbb{D}T)=0\}=\{Y_B|\textup{Tor}_1^B(Y,T)=0\}.$$
\end{prop}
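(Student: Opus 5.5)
We will prove that $(\mathcal{X}(T_A),\mathcal{Y}(T_A))$ satisfies the three axioms of Definition \ref{2.1}, and also that each pair of descriptions of $\mathcal{X}(T_A)$ and $\mathcal{Y}(T_A)$ agree. Throughout we regard $T$ as a $B$-$A$-bimodule, so that $\mathbb{D}T$ is an $A$-$B$-bimodule and in particular a right $B$-module.

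\textbf{The descriptions coincide.} We begin with adjunction and duality. For $X\in\textup{mod}B$,
$$\textup{Hom}_B(X,\mathbb{D}T)\cong \textup{Hom}_B(X,\textup{Hom}_K(T,K))\cong \mathbb{D}(X\otimes_B T).$$
Applying the same adjunction to a projective resolution of $X$ gives
$$\textup{Ext}^i_B(X,\mathbb{D}T)\cong \mathbb{D}\,\textup{Tor}_i^B(X,T).$$
Hence the two descriptions of $\mathcal{X}(T_A)$ agree, and so do the two descriptions of $\mathcal{Y}(T_A)$.

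\textbf{Background from tilting theory.} Next we recall the standard facts for a tilting module $T$ (Brenner--Butler):
\begin{itemize}
\item $\mathbb{D}T$ is a cotilting right $B$-module, and $A\cong\textup{End}_B(\mathbb{D}T)$.
\item The natural map $A\to\textup{End}_B(T)$ is an isomorphism, i.e.\ $T$ is faithfully balanced.
\item ${\rm pd}\,{}_BT\le 1$, hence $\textup{Tor}_i^B(-,T)=0$ for $i\ge2$.
\end{itemize}
The first follows from ${\rm pd}_A T\le1$ together with ${}_BT$ being a tilting left $B$-module. With these facts, the cleanest route is to show that $\mathcal{Y}(T_A)={}^{\perp_1}(\mathbb{D}T)$ coincides with $\textup{Cogen}(\mathbb{D}T)=\textup{Sub}\,\mathbb{D}T$. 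Here we use that $\mathbb{D}T$ is cotilting with injective dimension at most $1$. This is the dual of the standard result that for a tilting module $T$ one has $\textup{Fac}T=T^{\perp_1}$.

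\textbf{Verifying the axioms.}

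Axiom (1): if $X\in\mathcal{X}$ and $Y\in\mathcal{Y}$, then $Y$ embeds in $(\mathbb{D}T)^n$. Therefore $\textup{Hom}_B(X,Y)\hookrightarrow \textup{Hom}_B(X,\mathbb{D}T)^n=0$.

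Axiom (3): if $\textup{Hom}_B(\mathcal{X},N)=0$, consider the trace-reject sequence $0\to t N\to N\to N/tN\to0$, where $tN$ is the kernel of the universal map $N\to(\mathbb{D}T)^n$. Then $N/tN\in\textup{Sub}\,\mathbb{D}T=\mathcal{Y}$. Any map $tN\to\mathbb{D}T$ extends to $N$, because $\textup{Ext}^1_B(N/tN,\mathbb{D}T)=0$, and such an extension factors through $N/tN$. Hence it vanishes on $tN$, so $tN\in\mathcal{X}$. By hypothesis the inclusion $tN\to N$ is zero, so $N\cong N/tN\in\mathcal{Y}$.

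Axiom (2): if $\textup{Hom}_B(M,\mathcal{Y})=0$, then in particular $\textup{Hom}_B(M,\mathbb{D}T)=0$, since $\mathbb{D}T\in\mathcal{Y}$ by self-orthogonality. Hence $M\in\mathcal{X}$.

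\textbf{Main obstacle.} The main work is the identification $\mathcal{Y}(T_A)=\textup{Sub}\,\mathbb{D}T$, i.e.\ that $\mathbb{D}T$ is a cotilting $B$-module.

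The inclusion $\textup{Sub}\,\mathbb{D}T\subseteq {}^{\perp_1}\mathbb{D}T$ uses ${\rm id}\,\mathbb{D}T\le1$. Indeed, from $0\to Y\to(\mathbb{D}T)^n\to C\to0$ we get $C\in\textup{Sub}\,\mathbb{D}T$ (by a universal-map argument), and then a dimension shift finishes the proof.

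For the reverse inclusion, we would take the universal map $Y\to(\mathbb{D}T)^n$ and show that it is injective. To do this we pass through the Brenner--Butler equivalence $\textup{Hom}_A(T,-)\colon \textup{Fac}T\to\mathcal{Y}$, which identifies $\mathcal{Y}$ with modules of the form $\textup{Hom}_A(T,M)$. These embed into $\textup{Hom}_A(T,\mathbb{D}A)^n=(\mathbb{D}T)^n$ by applying $\textup{Hom}_A(T,-)$ to an injective envelope $M\hookrightarrow(\mathbb{D}A)^n$, using that $\textup{Hom}_A(T,-)$ is left exact.

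Alternatively, since this proposition is quoted from \cite{ASS}, we may simply cite the Brenner--Butler theorem there for both the equivalence and the torsion-pair property.
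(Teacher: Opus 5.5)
Your proof is correct. The paper gives no argument for this statement and only quotes \cite{ASS}. The standard proof there gets the equality of the two descriptions from the same isomorphisms $\textup{Hom}_B(X,\mathbb{D}T)\cong\mathbb{D}(X\otimes_BT)$ and $\textup{Ext}^1_B(Y,\mathbb{D}T)\cong\mathbb{D}\,\textup{Tor}_1^B(Y,T)$. It then obtains the torsion pair in one step by applying $\mathbb{D}$ to the torsion pair $(\mathcal{T}({}_BT),\mathcal{F}({}_BT))$ in $\textup{mod}\,B^{op}$ induced by the tilting left module ${}_BT$, since $\mathbb{D}\mathcal{F}({}_BT)=\mathcal{X}(T_A)$ and $\mathbb{D}\mathcal{T}({}_BT)=\mathcal{Y}(T_A)$. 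Equivalently, it uses the torsion pair cogenerated by the cotilting module $\mathbb{D}T$.

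You use the same input (${}_BT$ tilting, so $\mathbb{D}T$ cotilting) but unpack the last step. You identify $\mathcal{Y}(T_A)={}^{\perp_1}\mathbb{D}T$ with $\textup{Sub}\,\mathbb{D}T$ and then check the three axioms directly with a reject argument. This is longer, but it makes explicit the equality $\mathcal{Y}(T_A)=\textup{Sub}\,\mathbb{D}T$. The paper later uses this equality tacitly when it treats $\textup{Hom}_A(T,-)\colon\textup{Fac}T\to\textup{Sub}\,\mathbb{D}T$ as the equivalence of Theorem \ref{2.4}, in the proofs of Theorems \ref{4.2} and \ref{4.5}.

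There are two minor points in your last part.

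\textbf{The inclusion $\textup{Sub}\,\mathbb{D}T\subseteq{}^{\perp_1}\mathbb{D}T$.} You do not need the cokernel $C$ of $Y\hookrightarrow(\mathbb{D}T)^n$ to lie in $\textup{Sub}\,\mathbb{D}T$. Moreover, that claim does not follow from a universal-map argument alone: it fails for partial cotilting modules, and for $\mathbb{D}T$ it really rests on the reverse inclusion. The exact sequence $\textup{Ext}^1_B((\mathbb{D}T)^n,\mathbb{D}T)\to\textup{Ext}^1_B(Y,\mathbb{D}T)\to\textup{Ext}^2_B(C,\mathbb{D}T)$ already finishes the argument. Its outer terms vanish by self-orthogonality and ${\rm id}\,\mathbb{D}T\le 1$.

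\textbf{The reverse inclusion.} Going through the Brenner--Butler equivalence is fine only if that equivalence is proved independently of the torsion-pair statement. It can be, using $\textup{Tor}_2^B(-,T)=0$ together with the unit and counit. However, Theorem \ref{2.4} is stated in the paper after, and in terms of, the very torsion pair you are constructing. So you should make sure the version you invoke does not depend on it. Citing ${}^{\perp_1}U=\textup{Cogen}\,U$ for a cotilting module $U$ avoids this issue entirely.
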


Now we recall the tilting theorem from \cite{ASS} .
\begin{thm}\label{2.4}
Let $A$ be an algebra, $T\in \textup{mod}A$ a tilting module, $B =\textup{End}_A T$, and $(\mathcal{T}(T_A), \mathcal{F}(T_A))$, $(\mathcal{X}(T_A), \mathcal{Y}(T_A))$ be the induced torsion pairs in \textup{mod}$A$ and \textup{mod}$B$, respectively. Then
\begin{itemize}
\item [(1)]  The functors $\textup{Hom}_A(T,-)$ and $-\otimes_B T$ induce quasi-inverse equivalences between $\mathcal{T}(T_A)$ and $\mathcal{Y}(T_A)$.
 \item [(2)] The functors $\textup{Ext}^1_A(T,-)$ and $\textup{Tor}^B_1(-,T)$ induce quasi-inverse equivalences between $\mathcal{F}(T_A)$ and $\mathcal{X}(T_A)$.
\end{itemize}
\end{thm}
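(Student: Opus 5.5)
The statement is the Brenner--Butler tilting theorem, and I would prove it in the standard way. I would first set up the torsion pair in $\textup{mod}A$:
$$\mathcal{T}(T_A)=\textup{Fac}T=\{M\mid \textup{Ext}^1_A(T,M)=0\},\qquad \mathcal{F}(T_A)=\{N\mid \textup{Hom}_A(T,N)=0\}.$$
The inclusion $\textup{Fac}T\subseteq T^{\perp}$ comes from ${\rm pd}_A T\leq 1$, which makes $\textup{Ext}^1_A(T,-)$ right exact, together with $\textup{Ext}^1_A(T,T)=0$. The reverse inclusion is the step I expect to be the real obstacle, because it is the only place where $|T|=|A|$ enters. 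For $M\in T^{\perp}$, I would take the universal extension $0\to A\to E\to T^d\to 0$ (Bongartz). Then $E\oplus T$ is a partial tilting module with $|T|$ summands plus possibly more, and counting summands forces $E\in \textup{add}T$. Hence $A$ embeds in $\textup{add}T$ with cokernel in $\textup{add}T$. Applying this to a projective cover of $M$ and using $\textup{Ext}^1_A(T,M)=0$ to lift, one gets $M\in \textup{Fac}T$. A consequence I will use later is $\mathbb{D}A\in \textup{Fac}T$, since $\textup{Ext}^1_A(T,\mathbb{D}A)=0$. I take Proposition \ref{2.3} (the torsion pair $(\mathcal{X},\mathcal{Y})$ in $\textup{mod}B$) as given.

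For part (1), I would use the adjunction $-\otimes_B T\dashv \textup{Hom}_A(T,-)$ and show that the unit and counit are isomorphisms on the relevant classes. Take $M\in \textup{Fac}T$ and a right $\textup{add}T$-approximation $T_0\to M$, which is surjective. Its kernel $K$ satisfies $\textup{Ext}^1_A(T,K)=0$, because $\textup{Hom}_A(T,T_0)\to\textup{Hom}_A(T,M)$ is onto and $\textup{Ext}^1_A(T,T_0)=0$; so $K\in\textup{Fac}T$. Iterating gives an $\textup{add}T$-resolution $\cdots\to T_1\to T_0\to M\to 0$ that stays exact under $\textup{Hom}_A(T,-)$. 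Applying $\textup{Hom}_A(T,-)$ then gives a projective resolution of $\textup{Hom}_A(T,M)$ over $B$. Tensoring this resolution with $T$ and using $\textup{Hom}_A(T,T_i)\otimes_BT\cong T_i$ recovers the original exact sequence. This shows both that the counit $\textup{Hom}_A(T,M)\otimes_BT\to M$ is an isomorphism and that $\textup{Tor}_1^B(\textup{Hom}_A(T,M),T)=0$, i.e. $\textup{Hom}_A(T,M)\in\mathcal{Y}$.

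Conversely, for $Y\in\mathcal{Y}$, I would take a projective presentation $0\to \Omega Y\to P_0\to Y\to 0$. Tensoring with $T$ is exact because $\textup{Tor}_1^B(Y,T)=0$, and it lands in $\textup{Fac}T$. Comparing with $\textup{Hom}_A(T,-)$ applied to the resulting sequence (exact since $\Omega Y\otimes_B T\in\textup{Fac}T$) and using the five lemma shows the unit $Y\to\textup{Hom}_A(T,Y\otimes_BT)$ is an isomorphism. One first checks this on projectives, and then on $\Omega Y$ as well, since $\Omega Y$ also lies in $\mathcal{Y}$ when ${\rm pd}\,T_A\leq 1$ (because ${\rm pd}\,{}_BT\le 1$).

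For part (2), take $N\in\mathcal{F}(T_A)$ and its injective envelope $0\to N\to I\to I/N\to 0$. Both $I$ and $I/N$ lie in $\textup{Fac}T$, using $\mathbb{D}A\in\textup{Fac}T$ and closure of $\textup{Fac}T$ under quotients. Applying $\textup{Hom}_A(T,-)$ and using $\textup{Hom}_A(T,N)=0$ gives
$$0\to\textup{Hom}_A(T,I)\to\textup{Hom}_A(T,I/N)\to\textup{Ext}^1_A(T,N)\to 0.$$
Now tensor with $T$. By part (1) the first two terms lie in $\mathcal{Y}$ and tensor back to $I\to I/N$. The long Tor sequence then yields $\textup{Ext}^1_A(T,N)\otimes_BT=0$, so $\textup{Ext}^1_A(T,N)\in\mathcal{X}$. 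It also yields $\textup{Tor}_1^B(\textup{Ext}^1_A(T,N),T)\cong\ker(I\to I/N)=N$. The reverse composite is handled dually: for $X\in\mathcal{X}$, take an epimorphism from a module in $\mathcal{Y}$ (for instance a projective), note the kernel lies in $\mathcal{Y}$, and run the same long exact sequences. Naturality of these isomorphisms follows from the naturality of the connecting maps.
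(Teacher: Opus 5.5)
The paper does not prove this statement; it quotes the Brenner--Butler theorem from \cite{ASS}. Your outline follows the standard textbook proof: $\mathcal{T}(T_A)=\textup{Fac}T=T^{\perp}$ via the Bongartz universal extension, $\textup{add}T$-resolutions for the counit, a projective presentation for the unit, and injective envelopes or projective presentations with the long $\textup{Ext}$/$\textup{Tor}$ sequences for part (2). It is sound apart from two points.

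\textbf{The summand bound.} The step you single out as the crux rests on an input you do not prove. ``Counting summands forces $E\in\textup{add}T$'' invokes the theorem that a module $M$ with ${\rm pd}_AM\le 1$ and $\textup{Ext}^1_A(M,M)=0$ has at most $|A|$ pairwise non-isomorphic indecomposable summands. In the classical treatments (Bongartz, Happel--Ringel) this bound comes from the tilting theorem itself. One applies it to the Bongartz completion $T\oplus E$ and compares ranks via $K_0(A)\cong K_0(\textup{End}_A(T\oplus E))$.

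To avoid circularity, first prove (1) and (2) for $T$ with ${\rm pd}_AT\le 1$, $\textup{Ext}^1_A(T,T)=0$ and an exact sequence $0\to A\to T'\to T''\to 0$ with $T',T''\in\textup{add}T$. That is all your arguments actually use:
\begin{itemize}
\item this sequence gives $T^{\perp}\subseteq\textup{Fac}T$ by your lifting argument;
\item applying $\textup{Hom}_A(-,T)$ to it gives the projective resolution $0\to\textup{Hom}_A(T'',T)\to\textup{Hom}_A(T',T)\to{}_BT\to 0$, i.e.\ ${\rm pd}\,{}_BT\le 1$, which you assert without proof.
\end{itemize}
Only afterwards derive the summand bound and pass from $|T|=|A|$ to the exact sequence via the universal extension. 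Alternatively, cite the bound explicitly as an external result.

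\textbf{The unit argument.} As written, it has no base case. The five lemma at $Y$ needs the unit at $\Omega Y$. The same argument at $\Omega Y$ needs it at $\Omega^2Y$, and so on, with no reason to terminate since $\textup{gldim}B$ may be infinite.

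What closes it is that the unit $\eta_X\colon X\to\textup{Hom}_A(T,X\otimes_BT)$ is surjective for \emph{every} $X\in\textup{mod}B$. Take an epimorphism $P\to X$ with $P$ projective. Then $\eta_P$ is an isomorphism. The kernel of $P\otimes_BT\to X\otimes_BT$ is a quotient of $\Omega X\otimes_BT\in\textup{Fac}T$, so $\textup{Hom}_A(T,P\otimes_BT)\to\textup{Hom}_A(T,X\otimes_BT)$ is onto.

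Now apply the snake lemma to $0\to\Omega Y\to P_0\to Y\to 0$. The lower row is exact because $\textup{Tor}_1^B(Y,T)=0$ and $\Omega Y\otimes_BT\in\textup{Fac}T$. This gives $\ker\eta_Y\cong\textup{coker}\,\eta_{\Omega Y}=0$. So $\Omega Y\in\mathcal{Y}$ is not needed in part (1). The bound ${\rm pd}\,{}_BT\le 1$ is needed only in part (2), to put the kernel of $P\to X$ into $\mathcal{Y}$.
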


We also need the definition of $\tau$-tilting modules from \cite{AIR}.

\begin{defn}\label{2.5} Let $A$ be an algebra, $M\in{\text{mod}A}$ and $P\in\text{mod}A$ be projective.
\begin{itemize}
  \item [(1)]  We call $M$ $\tau$-rigid if $\text{Hom}_A(M,\tau M)=0$.

 \item [(2)]  We call $M$ $\tau$-tilting if $M$ is $\tau$-rigid and $|M|= |A|$.

 \item [(3)]  We call $M$ support $\tau$-tilting if there exists an idempotent $e$ of $A$ such that $M$ is a $\tau$-tilting $(A/\langle e\rangle)$- module.

\item [(4)] $(M, P)$ is a $\tau$-rigid pair if $M$ is $\tau$-rigid and $\text{Hom}_A(P, M)=0$.

\item [(5)]  $(M, P)$ is a  support $\tau$-tilting pair (resp. an almost support $\tau$-tilting pair) if $(M,P)$ is a $\tau$-rigid pair and $|M|+|P|=|A|$ (resp. $|M|+|P|=|A|-1$ ).

 \end{itemize}
\end{defn}

The following result on the relations of $\tau$-tilting modules and tilting modules in \cite[Proposition 2.2]{AIR} is essential in this paper.

\begin{prop}\label{2.12} Let $A$ be an algebra, $M\in{\textup{mod}A}$ with right annihilator $I=\textup{ann} M$ and $\bar{A}=A/I$.
If $M$ is a support $\tau$-tilting module in $\textup{mod}A$, then it is a tilting module in $\textup{mod} \bar{A}$.
\end{prop}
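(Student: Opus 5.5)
We prove Proposition \ref{2.12} by showing directly that $M$ satisfies conditions (1)--(3) of Definition \ref{2.2} as a module over $\bar{A}=A/I$. The plan rests on two standard facts: $M$ is a faithful $\bar{A}$-module, and $\tau$-rigidity is equivalent to $\textup{Ext}^1$-projectivity within $\textup{Fac}M$. Since $I=\textup{ann}M$, $M$ is faithful over $\bar{A}$. Choose a $K$-basis $m_1,\dots,m_t$ of $M$; then $\bar{A}\to M^{t}$, $a\mapsto (m_ia)_i$, is injective, so $\bar{A}\in\textup{Sub}M$. Also $\textup{Fac}M\subseteq \textup{mod}\bar{A}$, and $\textup{mod}\bar{A}$ is closed under submodules, quotients and extensions of $\bar{A}$-modules, the last as computed in $\textup{mod}\bar{A}$.

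First step, $\tau$-rigidity over $\bar{A}$. By the Auslander--Smal{\o} criterion, $\textup{Hom}_A(M,\tau_A M)=0$ is equivalent to $\textup{Ext}^1_A(M,\textup{Fac}M)=0$. Every extension in $\textup{mod}\bar{A}$ of a module $N\in\textup{Fac}M$ by $M$ is an extension in $\textup{mod}A$, so $\textup{Ext}^1_{\bar{A}}(M,N)\hookrightarrow \textup{Ext}^1_A(M,N)=0$. In particular $\textup{Ext}^1_{\bar{A}}(M,M)=0$, which is condition (2), and $M$ is $\tau$-rigid over $\bar{A}$.

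Second step, condition (1): $\textup{pd}_{\bar{A}}M\le 1$. I will use the criterion that $\textup{pd}_{\bar{A}}M\le1$ if and only if $\textup{Hom}_{\bar{A}}(\mathbb{D}\bar{A},\tau_{\bar{A}}M)=0$. Since $\bar{A}\in\textup{Sub}M$, we have $\mathbb{D}\bar{A}\in\textup{Fac}(\mathbb{D}M)$, so it suffices to show $\textup{Hom}_{\bar{A}}(\mathbb{D}M,\tau_{\bar{A}}M)=0$. Equivalently, I would use the faithful-module lemma of Auslander--Smal{\o}: if $M$ is faithful and $\tau$-rigid, then it is a partial tilting module. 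The direct argument goes as follows. Take the $\textup{add}M$-approximation sequence $0\to\bar{A}\to M^{t}\to C\to 0$, where $C\in\textup{Fac}M$. Then $\textup{Ext}^1_{\bar{A}}(M,-)$ vanishes on $\textup{Fac}M$ by the first step, and by dimension shifting together with the Auslander--Reiten formula $\textup{Ext}^1(M,X)\cong \mathbb{D}\overline{\textup{Hom}}(X,\tau M)$, we obtain $\textup{Hom}(\bar{A}, \tau_{\bar{A}}M)$-type vanishing. I expect this step to be the main obstacle. The cleanest route is to cite \cite[VIII Lemma 5.1]{ASS}-style results: a faithful $\tau$-rigid module has projective dimension at most one.

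Third step, condition (3): $|M|=|\bar{A}|$. Since $M$ is partial tilting over $\bar{A}$, Bongartz completion gives $|M|\le|\bar{A}|$. For the reverse inequality, I use that $M$ is support $\tau$-tilting over $A$: there is an idempotent $e$ with $Me=0$ and $|M|=|A/\langle e\rangle|$. Since $e\in I$, the algebra $\bar{A}$ is a quotient of $A/\langle e\rangle$, so $|\bar{A}|\le|A/\langle e\rangle|=|M|$. Here $|\cdot|$ for an algebra counts simple modules, which only decreases under passing to quotients. Hence $|M|=|\bar{A}|$, and by Definition \ref{2.2} $M$ is a tilting $\bar{A}$-module.
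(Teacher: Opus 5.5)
The paper gives no argument for this statement; it simply quotes \cite[Proposition 2.2]{AIR}. Your proposal instead reconstructs the standard proof behind that citation, and the overall structure is sound. You pass $\tau$-rigidity to $\bar A$ via the Auslander--Smal{\o} criterion and the injection $\textup{Ext}^1_{\bar A}(M,N)\hookrightarrow\textup{Ext}^1_A(M,N)$. You then use that faithful $\tau$-rigid implies partial tilting, and finish with a count of summands. Steps 1 and 3 are correct as written. Step 3 also makes explicit the reduction from support $\tau$-tilting to the faithful quotient: $\langle e\rangle\subseteq I$ gives $|\bar A|\le|A/\langle e\rangle|=|M|$, and Bongartz gives $|M|\le|\bar A|$. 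The citation buys brevity; your route buys a self-contained argument that shows exactly which ingredients are used.

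The one place that does not work as written is your attempted direct justification in Step 2.

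\begin{itemize}
\item From $\bar A_{\bar A}\in\textup{Sub}M$ you get $\mathbb{D}\bar A\in\textup{Fac}(\mathbb{D}M)$, but that is a statement about \emph{left} modules. Moreover, $\textup{Hom}_{\bar A}(\mathbb{D}M,\tau_{\bar A}M)$ is not defined, since $\mathbb{D}M$ is a left module and $\tau_{\bar A}M$ is a right module.
\item The approximation-sequence route via $0\to\bar A\to M^t\to C\to 0$ only produces information about $\textup{Hom}(\bar A,\tau_{\bar A}M)\cong\tau_{\bar A}M$, which is nonzero in general.
\end{itemize}

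What you need is $\mathbb{D}\bar A\in\textup{Fac}M$ as right modules. Since $M_{\bar A}$ is faithful, so is the left module $\mathbb{D}M$. Hence there is an embedding ${}_{\bar A}\bar A\hookrightarrow(\mathbb{D}M)^s$, and dualizing gives an epimorphism $M^s\twoheadrightarrow\mathbb{D}\bar A$. Therefore
$\textup{Hom}_{\bar A}(\mathbb{D}\bar A,\tau_{\bar A}M)\hookrightarrow\textup{Hom}_{\bar A}(M^s,\tau_{\bar A}M)=0$
by Step 1, and $\textup{pd}_{\bar A}M\le 1$ follows from the criterion you already quoted. This two-line argument is exactly the content of the faithful-$\tau$-rigid lemma you propose to cite, so your fallback citation is legitimate. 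With either the citation or this fix, your proof is complete.
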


We also need the following result which generalizes \cite[Proposition 2.3]{Z}.

\begin{prop}\label{2.6} Let $A$ be an algebra and let $T\in\textup{mod}A$ be a support $\tau$-tilting module. For any $M\in\textup{Fac}T$, there is an exact sequence $\cdots\rightarrow T_1\stackrel{f_1}{\rightarrow}T_0\stackrel{f_0}{\rightarrow}M\rightarrow 0$ with $T_i\in \textup{add}T$ and $\textup{Ker} f_i\in \textup{Fac}T$.
\end{prop}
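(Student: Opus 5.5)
The plan is to build the resolution inductively, taking at each step a right $\textup{add}T$-approximation, and to show that the kernel always stays in $\textup{Fac}T$. By Proposition \ref{2.12}, with $I=\textup{ann}T$ and $\bar A=A/I$, the module $T$ is a tilting $\bar A$-module. Every $M\in\textup{Fac}T$ is annihilated by $I$, so $\textup{Fac}T$ is a full subcategory of $\textup{mod}\bar A$. In $\textup{mod}\bar A$ we also have $\textup{Fac}T=\mathcal{T}(T_{\bar A})=\{X\mid \textup{Ext}^1_{\bar A}(T,X)=0\}$, by the classical characterization of the torsion class of a tilting module.

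Given $M\in\textup{Fac}T$, let $f_0\colon T_0\to M$ be a minimal right $\textup{add}T$-approximation, which exists since $\textup{add}T$ is functorially finite. Because $M\in\textup{Fac}T$, some epimorphism $T^n\to M$ factors through $f_0$, so $f_0$ is surjective. Put $K_0=\textup{Ker}f_0$, a $\bar A$-module, and apply $\textup{Hom}_{\bar A}(T,-)$ to $0\to K_0\to T_0\to M\to 0$. This gives the exact sequence
$$\textup{Hom}(T,T_0)\to\textup{Hom}(T,M)\to\textup{Ext}^1_{\bar A}(T,K_0)\to\textup{Ext}^1_{\bar A}(T,T_0).$$
The first map is surjective because $f_0$ is an approximation, and the last term vanishes since $T$ is tilting over $\bar A$. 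Hence $\textup{Ext}^1_{\bar A}(T,K_0)=0$, so $K_0\in\mathcal{T}(T_{\bar A})=\textup{Fac}T$.

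Replacing $M$ by $K_0$ and iterating yields $f_1\colon T_1\to K_0$, and so on. Composing with the inclusions gives the exact sequence $\cdots\to T_1\to T_0\to M\to 0$ with every $\textup{Ker}f_i\in\textup{Fac}T$. Here each $f_i$ is regarded as the composite $T_i\to K_{i-1}\hookrightarrow T_{i-1}$, so that $\textup{Ker}f_i=K_i$.

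The main obstacle is justifying that $\textup{Fac}T$ coincides with $T^{\perp_1}$ computed in $\textup{mod}\bar A$, and not merely $\textup{Fac}T\subseteq T^{\perp_1}$. It is the reverse inclusion that is applied to the kernel $K_0$. This is the standard Brenner–Butler/Happel–Ringel fact for tilting modules, and it is recorded in the torsion pair underlying Theorem \ref{2.4}. It is also essential that the $\textup{Ext}$ is taken over $\bar A$, not over $A$: over $A$ the module $T$ need not be tilting, and $\textup{Ext}^1_A(T,K_0)$ need not vanish. So I will state explicitly that all the modules involved are $\bar A$-modules and that the Hom and Ext computations take place in $\textup{mod}\bar A$.
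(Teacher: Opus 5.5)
Your proposal is correct and follows essentially the same route as the paper's proof. Both use Proposition \ref{2.12} to make $T$ tilting over $\bar A$, take a minimal right $\textup{add}T$-approximation, and show $\textup{Ext}^1_{\bar A}(T,\textup{Ker}f_0)=0$, so the kernel lies in $\textup{Fac}T$; then iterate. You additionally spell out the steps the paper leaves implicit: why the approximation is surjective, the long exact sequence giving the $\textup{Ext}$-vanishing, and why working over $\bar A$ rather than $A$ is essential.
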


\begin{proof} By Proposition \ref{2.12}, one gets that $T$ is a tilting module in ${\rm mod}\bar{A}$. Take a minimal ${\rm add}T$-approximation of $M \in {\rm Fac}T\subseteq {\rm mod}\bar{A}$, one gets the following exact sequence $$0\rightarrow {\rm Ker}f_0\rightarrow T_0\rightarrow M\rightarrow 0.$$  Then one gets that ${\rm Ker}f_0$ satisfies ${\rm Ext}_{\bar{A}}^1(T,{\rm Ker}f_0)=0$, which implies that ${\rm Ker}f_0\in {\rm Fac}T$ since $T$ is a tilting module. Continue the same process, the assertion holds.
\end{proof}
By using Proposition \ref{2.6}, for any $M\in\mathcal{C}={\rm Fac} T$, we use $\Omega^i_T M$ the $(i+1)$-th syzygy of $M$ relative to $T$. Denote by $\underline{\mathcal{C}}_T$ the stable category of $\mathcal{C}$ modulo ${\rm add}T$. Then $\Omega^1_T: \underline{\mathcal{C}}_T\longrightarrow \underline{\mathcal{C}}_T$ is an additive functor.

We also need the following definition of mutations for support $\tau$-tilting modules from \cite[Definition 2.19]{AIR}.

\begin{defn}\label{2.10} Let $A$ be an algebra and let $(M,P)$ and $(N,Q)$ be support $\tau$-tilting pairs in $\text{mod}A$. If there exists an almost support $\tau$-tilting pair that is a common direct summand of $(M,P)$ and $(N,Q)$, then we say that$(N,Q)$ is a mutation of $(M,P)$.  For each direct indecomposable summand $L$ of $M$ or $P$, there uniquely exists a mutation of $(M,P)$ at $L$.  If $\text{Fac}M \subseteq \text{Fac}N$, then $N$ is called a left mutation of $M$. Dually, $N$ is called a right mutation of $M$.
\end{defn}

 Now we recall the following definitions of bricks and semi-bricks, see \cite{As, Ri} for details.

\begin{defn}\label{2.7}  Let $A$ be an algebra and $S\in {\rm mod}A$.
\begin{itemize}
  \item
[(1)]  $S$ is called a brick if $\text{End}_A (S)$ is a division $K$-algebra.

  \item
[(2)] $S$ is called a semi-brick if $S\simeq \oplus_{i=1}^m S_i$ and $\text{Hom}_A (S_i, S_j) =0$ holds for any $i\neq j$, where $S_i$ is a brick and $m$ is a positive integer.
\end{itemize}
\end{defn}

 For a full  subcategory  $\mathcal{C}\subseteq \text{mod}A$. Denote by
$\text{add}\mathcal{C}$ the additive closure of $\mathcal{C}$,
 $\text{Fac}\mathcal{C}$ the subcategory of the factor modules of objects in $\text{add}\mathcal{C}$,
 and $\text{Filt}(\mathcal{C})$ the subcategory of the objects $M$ such that there exists a sequence
$0= M_0 \subseteq M_1 \subseteq M_2 \cdots  \subseteq M_n = M$ with  $M_i/M _{i-1} \in$ ~add$\mathcal{C}$.
We use $T(\mathcal{C})$ to denote the subcategory $\text{Filt}(\text{Fac}\mathcal{C})$.

The following property on semi-bricks \cite[Lemma 2.7(1)]{As} is essential in this paper.

\begin{prop}\label{2.8}
Let $A$ be an algebra and $S \in \textup{mod}A$ a semi-brick. Let $S_1$ be a brick which is a direct summand of $S$. Every nonzero homomorphism $f: M \to S_1$ with $M \in T(S)$ is surjective. Moreover, we have $\textup{Ker}f \in T(S)$.
\end{prop}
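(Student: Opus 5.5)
We argue by induction on the length $n$ of a filtration $0=M_0\subseteq M_1\subseteq\cdots\subseteq M_n=M$ with $M_i/M_{i-1}\in\text{Fac}S$. Note that $\text{add}(\text{Fac}S)=\text{Fac}S$, so the factors really lie in $\text{Fac}S$. We also use that $T(S)=\text{Filt}(\text{Fac}S)$ is closed under extensions: concatenating filtrations of the end terms of a short exact sequence gives a filtration of the middle term.

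\emph{Base case $M\in\text{Fac}S$.} Choose an epimorphism $\pi:S^{(a)}\to M$, where $S^{(a)}=\bigoplus_j S_j^{a_j}$, and put $g=f\pi:S^{(a)}\to S_1$. Since $f\neq0$ and $\pi$ is surjective, $g\neq 0$. The components $S_j\to S_1$ of $g$ vanish for $j\neq1$, because $S$ is a semi-brick. Hence some component $S_1\to S_1$ is nonzero, and it is an isomorphism because $\text{End}_A(S_1)$ is a division algebra. Therefore $g$ is surjective, and so is $f$.

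For the kernel, $g$ restricted to $S_1^{a_1}$ is a nonzero map given by a row $(\phi_1,\dots,\phi_{a_1})$ of elements of the division ring $\text{End}_A(S_1)$. After an invertible change of coordinates on $S_1^{a_1}$ this row becomes the projection onto one copy of $S_1$. Consequently $\text{Ker}\,g\cong S_1^{a_1-1}\oplus\bigoplus_{j\neq1}S_j^{a_j}$, which lies in $\text{add}S$. Since $\pi$ is surjective and $f\pi=g$, we get $\pi(\text{Ker}\,g)=\text{Ker}f$: if $f(\pi(x))=0$ then $x\in\text{Ker}\,g$. Hence $\text{Ker}f\in\text{Fac}S\subseteq T(S)$.

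\emph{Inductive step.} Put $M'=M_{n-1}$ and $M''=M/M'\in\text{Fac}S$; then $M'\in T(S)$ has a filtration of length $n-1$. We distinguish two cases.
\begin{itemize}
\item[(a)] Suppose $f|_{M'}\neq0$. By induction $f|_{M'}$ is surjective, so $f$ is surjective. The snake lemma applied to $0\to M'\to M\to M''\to0$ over $S_1\xrightarrow{=}S_1$ gives an exact sequence $0\to\text{Ker}(f|_{M'})\to\text{Ker}f\to M''\to0$, since the cokernel of $f|_{M'}$ is zero. Here $\text{Ker}(f|_{M'})\in T(S)$ by induction and $M''\in T(S)$, so extension closure gives $\text{Ker}f\in T(S)$.
\item[(b)] Suppose $f|_{M'}=0$. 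Then $f$ factors as $\bar f:M''\to S_1$ with $\bar f\neq0$. By the base case $\bar f$ is surjective with $\text{Ker}\bar f\in T(S)$. Hence $f$ is surjective, and there is an exact sequence $0\to M'\to\text{Ker}f\to\text{Ker}\bar f\to0$, so again $\text{Ker}f\in T(S)$.
\end{itemize}

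The only delicate point is the base case, specifically identifying $\text{Ker}f$ as a quotient of an object in $\text{add}S$. This rests on the fact that a nonzero map $S_1^{a_1}\to S_1$ splits off a copy of $S_1$ via an invertible change of coordinates, which holds precisely because $\text{End}_A(S_1)$ is a division algebra.
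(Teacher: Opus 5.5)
Your proof is correct and complete. Each step checks out:
\begin{itemize}
\item In the base case, a nonzero map $S_1^{a_1}\to S_1$ is a split epimorphism, because one of its components is invertible in the division ring $\textup{End}_A(S_1)$. Its kernel is therefore $\cong S_1^{a_1-1}$, and $\textup{Ker}f=\pi(\textup{Ker}\,g)$ lies in $\textup{Fac}S$.
\item In the inductive step, your two cases ($f|_{M'}\neq 0$ or $f|_{M'}=0$) are handled correctly, and the closure of $T(S)$ under extensions is justified.
\end{itemize}

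The paper does not prove this statement itself; it quotes it from \cite[Lemma 2.7(1)]{As}. So there is no in-paper argument to compare against, and your proposal is a self-contained verification of the cited lemma.

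Your induction can also be unwound into an explicit filtration of the kernel. Let $i$ be minimal with $f|_{M_i}\neq 0$. Then
\[
0=M_0\subseteq\cdots\subseteq M_{i-1}\subseteq \textup{Ker}f\cap M_i\subseteq\cdots\subseteq \textup{Ker}f\cap M_n=\textup{Ker}f
\]
is a filtration of $\textup{Ker}f$ with all factors in $\textup{Fac}S$.
\begin{itemize}
\item The step at $M_i$ has factor $\textup{Ker}\bigl(M_i/M_{i-1}\to S_1\bigr)$, which lies in $\textup{Fac}S$ by your base case.
\item Each later step has factor $\cong M_j/M_{j-1}$, because $f|_{M_{j-1}}$ is already surjective for $j>i$.
\end{itemize}
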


We also need the following proposition on relations between support $\tau$-tilting modules and semi-bricks, see \cite[Proposition 2.9(1), Theorem 2.3(2)]{As}.

\begin{thm}\label{2.9} Let $A$ be an algebra and $T\in\textup{mod}A$.
If $T$ is a support $\tau$-tilting module, then $ \textup{Fac}T = T(S)=\textup{Filt(Fac}S)$, where $S$ is a the left finite semi-brick according to $T$ in \cite[Theorem 2.3(2)]{As}.
\end{thm}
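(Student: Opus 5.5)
We prove the two inclusions $T(S)\subseteq \textup{Fac}T$ and $\textup{Fac}T\subseteq T(S)$ separately, using the explicit description of the semi-brick from \cite{As}. Write $T=T_1\oplus\cdots\oplus T_n$ with $T_i$ indecomposable and $\Lambda=\textup{End}_A T$. Following \cite{As}, $S=\bigoplus_i S_i$ where $S_i=T_i/R_i$, for those $i$ with $T_i\notin \textup{Fac}(T/T_i)$, and
$$R_i=\sum\{\textup{Im}\, g \mid g\in \textup{rad}_{\textup{add}T}(T,T_i)\}.$$
I will take for granted from \cite[Theorem 2.3]{As} that these $S_i$ are bricks with $\textup{Hom}_A(S_i,S_j)=0$ for $i\neq j$. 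The same construction with all $i$ allowed also works for the argument below, since a summand $T_i\in\textup{Fac}(T/T_i)$ has $R_i=T_i$.

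For $T(S)\subseteq \textup{Fac}T$: each $S_i$ is a factor module of $T$, so $S\in\textup{Fac}T$. Since $T$ is $\tau$-rigid, $\textup{Fac}T$ is a torsion class by \cite{AIR}, so it is closed under quotients and extensions. Hence $\textup{Fac}S\subseteq\textup{Fac}T$, and then $\textup{Filt}(\textup{Fac}S)\subseteq\textup{Fac}T$.

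For $\textup{Fac}T\subseteq T(S)$ I argue by induction on $\dim_K M$ for $M\in\textup{Fac}T$. Given $M\neq 0$, let $N\subseteq M$ be the sum of the images of all composites $T\xrightarrow{r}T'\xrightarrow{h}M$ with $T'\in\textup{add}T$, $r$ in the radical of $\textup{add}T$, and $h$ arbitrary. Then the following hold.
\begin{itemize}
\item[(a)] $N\in\textup{Fac}T$, since $N$ is a sum of images of maps out of $T$.
\item[(b)] $M/N\in\textup{Fac}S$. Any map $h:T_i\to M$ sends $R_i$ into $N$, because precomposing $h$ with a radical map again gives a map of the form defining $N$. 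So $h$ induces $S_i\to M/N$ when $S_i$ is a summand of $S$, and $T_i\to M/N$ is zero when $R_i=T_i$. Choosing an epimorphism $T^m\to M$ therefore yields an epimorphism $S^{m}\to M/N$.
\item[(c)] $N\neq M$. This is the key step and the one I expect to need the most care.
\end{itemize}

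Granting (c), $\dim N<\dim M$, so $N\in T(S)$ by induction. Then $0\to N\to M\to M/N\to 0$ shows $M\in\textup{Filt}(\textup{Fac}S)=T(S)$, as this class is closed under extensions. Applying this to $M=T$ gives the result.

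To prove (c), I use a Nakayama-type argument. Suppose $N=M$ and fix an epimorphism $p:T^m\to M$. Any map $T\to M$ is then a finite sum of composites $h r$ with $r$ radical. The source module $T'$ of each $h$ satisfies $T'\in\textup{add}T$, so by the same hypothesis $h$ is again a sum of composites $h' r'$ with $r'$ radical. Iterating, every map $T\to M$, and in particular $p$, is a sum of maps factoring through products of $k$ radical maps in $\textup{add}T$, for every $k$. Since the radical of $\textup{add}T$ is nilpotent ($\textup{rad}\Lambda$ is nilpotent), such products vanish for $k$ large. Thus $p=0$, contradicting $M\neq0$. To make this rigorous one should bound the number of summands, which is harmless: fix a single right $\textup{add}T$-approximation $T_M\to M$ and work inside the finite-dimensional algebra $\textup{End}_A(T\oplus T_M)$.
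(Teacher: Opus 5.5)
Your inclusion $T(S)\subseteq \textup{Fac}T$ and steps (a) and (b) are fine. The paper itself gives no argument here; it only cites \cite[Proposition 2.9(1), Theorem 2.3(2)]{As}. However, step (c) is false, so your induction does not progress.

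Take $A=K(1\to 2)$ and $T=\substack{1\\2}\oplus 1$. This $T$ is $\tau$-tilting (indeed APR-tilting), and its semibrick is $S=\substack{1\\2}$. Let $M=1\in\textup{Fac}T$ and let $p\colon\substack{1\\2}\to 1$ be the canonical projection. The map $r=(p,0)\colon T\to 1$ lies in the radical of $\textup{add}T$, so with $T'=1$ and $h=\textup{id}_1$ you get $N=M$.

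The Nakayama argument fails where you pass from ``$M$ is a sum of images of composites $hr$'' to ``every map $T\to M$ (and then every map $T'\to M$) is a sum of composites $hr$''. That step would require lifting along the epimorphism $\bigoplus T'\to M$, which needs $T$ to be projective, or at least a kernel in $\textup{Fac}T$, which you do not control. In the example, $(0,\textup{id}_1)\colon T\to 1$ is not such a sum, because every radical map out of $T$ vanishes on the summand $1$. Fixing an approximation $T_M\to M$ does not help either: here $T_M=1$, and the radical map $(p,0)$ already has image $M$.

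The missing idea is to dualize: peel $\textup{Fac}S$ off from the bottom, using the socle rather than the radical. Let $0\neq M\in\textup{Fac}T$.
\begin{itemize}
\item[(i)] The right $B$-module $\textup{Hom}_A(T,M)$ is nonzero, so its socle contains some $f\neq 0$. Then $f\circ r=0$ for all $r\in\textup{rad}B$.
\item[(ii)] Hence $f$ kills every $R_i$, and so induces a nonzero map $S=T/\textup{rad}_BT\to M$ (the summands with $R_i=T_i$ vanish).
\item[(iii)] The image $I$ of this map is nonzero and lies in $\textup{Fac}S$. Also $M/I\in\textup{Fac}T$, since $\textup{Fac}T$ is closed under quotients.
\item[(iv)] By induction on $\dim_K M$, $M/I\in T(S)$. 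Since $\textup{Filt}(\textup{Fac}S)$ is closed under extensions, $M\in T(S)$.
\end{itemize}
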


Now we recall the definition of label support $\tau$-tilting quivers with bricks from \cite[Definition 2.14]{As}.

\begin{defn}\label{2.11} Let $A$ be an algebra and let $M\in\textup{mod}A$ be a support $\tau$-tilting module and decompose $M$ as $M =\oplus_{i=1}^t M_i$
with $M_i$ indecomposable. Assume that $M \to N$ is an arrow in the support $\tau$-tilting quiver of $A$, and that
$N$ is the left mutation of $M$ at $M_i$. Then we label this arrow with a brick $S_i=M_i/\Sigma_{f\in \text{rad}\text{Hom}_A(M, M_i)}{\text{Im}f}$.
\end{defn}

\section{Depth relative to $T$}

In this section, we assume that $A$ is an algebra and $T\in \text{mod} A$ is a support $\tau$-tilting module. Denote by $\text{Fac}T=T(S)=\text{Filt(Fac} S)$, where $S=S_1\bigoplus \cdots \bigoplus S_m$ is the left finite semi-brick in \cite[Theorem 2.3(2)]{As}
 and $S_i$ is a brick. Then we show the main result on the depth relative to $T$ which generalizes \cite[Proposition 1.3]{G}.


Let $A$ be an algebra. Recall from \cite{AuB} that the grade of $M\in \text{mod}A$ denoted by $\text{grade}M$ is defined as the infimum of the integer $i\geq 0$ such that $\text{Ext}^i_A(M,A)\neq 0$. Recall from \cite{G} that the depth of $A$ denoted by $\text{depth}A$ is defined as the supremum of the grade of simple modules.

In the following we give the definition of grade and depth relative to $T$.

\begin{defn}\label{3.1} Let $A$ and $T$ be as above. Denote by $\mathcal{C}=\text{Fac}T$.
\begin{enumerate}
\item The grade of $M\in \mathcal{C}$ is defined as
$\text{grade}_T M =\text{inf}\{i\geq 0 |\text{Ext}^i_A(M,T)\neq 0\}.$
\item The depth of $\mathcal{C}$ is defined as $\text{depth}_T \mathcal{C}={\sup\limits_{1\leq i\leq m}} \text{grade}_T S_i$.
\end{enumerate}
\end{defn}
 It is easy to see that the grade of $M$ relative to $T$ we defined is the classical grade whenever $T=A$.
We give the following example to illustrate Definition \ref{3.1}.

\begin{exm}\label{3.2} Let $A$ be an algebra given by the quiver $\xymatrix{1\ar[r]^a&2\ar[r]^b&3}$ with the relation $ab=0$. Then
\begin{itemize}
    \item
    [(1)] $T=\substack{ 1\\2\\ }\oplus1\oplus3$ is a $\tau$-tilting module but not a tilting module and $\mathcal{C}=\textup{Fac} T=\textup{add}(1\oplus3\oplus\substack{1\\2})$.
    \item
    [(2)] Every indecomposable module in $\textup{Fac}T$ has $T$-grade $0$.
    \item
    [(3)] The semi-brick relative to $T$ is $S=3\oplus\substack{1\\2}$. So the $\textup{depth}_T \mathcal{C}=0$.
\end{itemize}
\end{exm}

To prove the main result we need the following lemma from \cite[Page 8]{Xi}.

\begin{lem}\label{3.a} Let $A$ be an algebra and $T\in \textup{mod}A$. Denote by $B=\textup{End}_A T$. If $T_1\stackrel{f_1}{\rightarrow} T_0\stackrel{f_0}{\rightarrow}M\rightarrow 0$ is a minimal ${\rm add}T$-presentation of $M$, then $\textup{Hom}_A(T_0,T)\stackrel{{f_1}^T}{\rightarrow}\textup{Hom}_A(T_1,T)\rightarrow \textup{Coker} {f_1}^T\rightarrow 0$ is a minimal projective presentation of $\textup{Coker} {f_1}^T$ in $\textup{mod}B^{op}$.
\end{lem}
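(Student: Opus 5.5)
The plan is to transport the statement through the contravariant functor $(-)^T:={\rm Hom}_A(-,T)$. Restricted to ${\rm add}T$, this functor is a duality onto ${\rm proj}\,B^{op}$: it sends $T$ to ${\rm Hom}_A(T,T)=B$, regarded as a left $B$-module, and it is fully faithful on ${\rm add}T$. Consequently ${\rm Hom}_A(T_0,T)$ and ${\rm Hom}_A(T_1,T)$ are projective $B^{op}$-modules. By definition of the cokernel, the sequence ${\rm Hom}_A(T_0,T)\xrightarrow{f_1^T}{\rm Hom}_A(T_1,T)\to{\rm Coker}f_1^T\to0$ is exact. So it is a projective presentation, and everything reduces to minimality.

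Because the functor is a duality, it identifies ${\rm rad}_A(X,Y)$ with ${\rm rad}_{B^{op}}(Y^T,X^T)$ for $X,Y\in{\rm add}T$, and it preserves split monos and split epis, turning one into the other. Minimality of $P_1\to P_0\to C\to0$ amounts to two conditions. First, $P_0\to C$ is a projective cover, which is equivalent to ${\rm Im}(P_1\to P_0)\subseteq{\rm rad}P_0$, i.e. the map $P_1\to P_0$ is a radical map. Second, $P_1\to{\rm Im}$ is a projective cover, which is equivalent to its kernel lying in ${\rm rad}P_1$.

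For the first condition, I would show that $f_1\in{\rm rad}_A(T_1,T_0)$. Suppose not. Then some indecomposable summand $T'$ of $T_1$ is mapped by $f_1$ isomorphically onto a summand of $T_0$, so ${\rm Ker}f_0={\rm Im}f_1$ contains a direct summand $T'$ of $T_0$. Then $f_0$ is not right minimal, which contradicts the minimality of the ${\rm add}T$-approximation $f_0$. Dualizing gives $f_1^T\in{\rm rad}$, so ${\rm Hom}_A(T_1,T)\to{\rm Coker}f_1^T$ is a projective cover.

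For the second condition, I would compute ${\rm Ker}f_1^T=\{g:T_0\to T\mid gf_1=0\}$. Since $T_1\to{\rm Ker}f_0$ is an epimorphism, this equals ${\rm Hom}_A(M,T)\circ f_0$. One then needs that no such $g$ restricts to a split epi onto a summand of $T$. This is the step I expect to be the main obstacle: if $M$ has a direct summand in ${\rm add}T$, the identity on that summand produces exactly such a $g$. So I would argue that, under the minimality conventions of \cite{Xi}, $f_1$ is left minimal in the appropriate sense, equivalently no nonzero summand of $T_0$ is "unused" by $f_1$, and dualize that property.

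If this turns out to fail in general, I would state and prove the lemma modulo summands of $M$ lying in ${\rm add}T$, i.e. in $\underline{\mathcal{C}}_T$, where it is exactly what is used later. There the first condition, proved above, is the essential content.
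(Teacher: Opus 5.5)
The paper gives no argument for this lemma; it only cites \cite[p.~8]{Xi}. Your treatment of the first minimality condition is correct. Right minimality of $f_0$ forces $f_1\in{\rm rad}_A(T_1,T_0)$. The duality ${\rm Hom}_A(-,T)\colon{\rm add}T\to{\rm proj}\,B^{op}$ then makes $f_1^T$ a radical map, so ${\rm Hom}_A(T_1,T)\to{\rm Coker}f_1^T$ is a projective cover.

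Your diagnosis of the second condition is also correct, and it shows that the lemma as printed is false. For $M=T$ the minimal ${\rm add}T$-presentation is $0\to T\xrightarrow{1}T\to 0$, and its dual $B\to 0\to 0\to 0$ is not a minimal projective presentation of $0$. In fact ${\rm Ker}f_1^T={\rm Hom}_A(M,T)\circ f_0$ lies in ${\rm rad}\,{\rm Hom}_A(T_0,T)$ if and only if $M$ has no nonzero direct summand in ${\rm add}T$. This is the relative analogue of the hypothesis ``$M$ has no projective summands'' in the classical statement about the minimal presentation of ${\rm Tr}M$.

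The gap is that you never establish the second condition. Invoking ``the minimality conventions of \cite{Xi}'' to make $f_1$ left minimal cannot work. No convention on the ${\rm add}T$-presentation changes the example $M=T$, where $f_1\colon 0\to T$ is not left minimal. In your fallback, calling the first condition ``the essential content'' still leaves the second one unproved.

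What is needed is to add the hypothesis that $M$ has no nonzero direct summand in ${\rm add}T$, and then argue as follows. Let $g\in{\rm Ker}f_1^T$; by exactness $g=hf_0$ for some $h\colon M\to T$. Suppose $g\notin{\rm rad}_A(T_0,T)$. Then there are an indecomposable summand $\iota\colon T'\to T_0$ and a map $p\colon T\to T'$ with $p\,h\,f_0\,\iota={\rm id}_{T'}$. So $f_0\iota\colon T'\to M$ is a split monomorphism, and $T'\in{\rm add}T$ is a nonzero summand of $M$, a contradiction. Hence ${\rm Ker}f_1^T\subseteq{\rm rad}_A(T_0,T)$. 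Under your identification of radicals this is ${\rm rad}\,{\rm Hom}_A(T_0,T)$, so ${\rm Hom}_A(T_0,T)\to{\rm Im}f_1^T$ is a projective cover.

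With this hypothesis and this step your proof is complete. Without the hypothesis no proof is possible, as your own observation shows.
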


Let $A$ be an algebra. Recall from \cite{AuB}, the finitistic dimension of $A$ is defined as:
$$
\text{findim}A= \sup\left\{\text{pd}_A M \mid M \in \text{mod}A,\ \text{pd}_A M < \infty \right\}.
$$

Dually, we use findim$A^{op}$ to denote the finitistic dimension of $A^{op}$.

Recall that a module $M$ is called self-orthogonal if ${\rm Ext}_A^i(M,M)=0$ for all $i\geq 1$. Now we are in a position to show our main result in this section.

\begin{thm}\label{3.3}
 Let $A$ be an algebra and $T\in \textup{mod}A$ be a self-orthogonal support $\tau$-tilting module (resp. a tilting module) with
$\textup{Fac}T= \textup{Filt(Fac}S)$, where $S=S_1 \bigoplus\cdots \bigoplus S_m$ is a semi-brick and $S_i$ is a brick. Denote by $B = \textup{End}_AT$. Then the depth of $\mathcal{C}$ relative to $T$ $\leq$ the finitistic dimension of $B^{op}$, that is, ${\rm depth}_T \mathcal{C}\leq {\rm findim} B^{op}$.
\end{thm}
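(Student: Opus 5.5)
The plan is to adapt Gelinas' argument for Theorem \ref{1.1}, replacing the projective resolution of a simple module by a relative resolution by ${\rm add}T$ from Proposition \ref{2.6}, and replacing ${\rm Hom}_A(-,A)$ by ${\rm Hom}_A(-,T)$. Fix a brick $S_i$ with $d={\rm grade}_T S_i$. It suffices to produce, for each finite $n\le d$, a $B^{op}$-module of projective dimension exactly $n$; this gives ${\rm findim}B^{op}\ge d$, and also covers the case $d=\infty$.

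\textbf{Step 1: relative resolution and dimension shifting.} Since $S_i\in{\rm Fac}T$, Proposition \ref{2.6} gives an exact sequence
$$\cdots\to T_1\stackrel{f_1}{\to}T_0\stackrel{f_0}{\to}S_i\to 0.$$
Here each $T_j\in{\rm add}T$ and each $K_j={\rm Ker}f_j\in{\rm Fac}T$, and I take every step to be a minimal right ${\rm add}T$-approximation.

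Because $T$ is self-orthogonal, ${\rm Ext}^{\ge1}_A(T_j,T)=0$, so the $T_j$ are ${\rm Hom}_A(-,T)$-acyclic. Dimension shifting along the short exact sequences $0\to K_j\to T_j\to K_{j-1}\to0$ (with $K_{-1}=S_i$) then shows that the cohomology of the complex
$$0\to{\rm Hom}_A(T_0,T)\to{\rm Hom}_A(T_1,T)\to{\rm Hom}_A(T_2,T)\to\cdots$$
in degree $j$ is ${\rm Ext}^j_A(S_i,T)$. In degree $0$ this is automatic, since ${\rm Hom}_A(S_i,T)$ is the kernel of the first map.

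\textbf{Step 2: the resulting $B^{op}$-module.} Each ${\rm Hom}_A(T_j,T)$ is a projective $B^{op}$-module, because ${\rm Hom}_A(-,T)$ is a duality ${\rm add}T\to{\rm proj}B^{op}$. Now let $n\le d$ be finite. Since ${\rm Ext}^j_A(S_i,T)=0$ for $j<n$, the complex
$$0\to{\rm Hom}_A(T_0,T)\to\cdots\to{\rm Hom}_A(T_n,T)\to C_n\to0,\qquad C_n={\rm Coker}f_n^T,$$
is exact, so it is a projective resolution of $C_n$ and ${\rm pd}_{B^{op}}C_n\le n$.

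Minimality of the approximations means every $f_j$ with $j\ge1$ lies in the radical of ${\rm add}T$. Under the duality this makes every map $f_j^T$ a radical map, exactly as in Lemma \ref{3.a}. Hence the resolution is minimal, and ${\rm pd}C_n=n$ provided $T_0,\dots,T_n$ are all nonzero.

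\textbf{Step 3 (main obstacle): nonvanishing of the terms.} I must rule out that the relative resolution stops before degree $n$. Suppose $T_{k+1}=0$ for some $k<n$, so the complex is the finite complex $0\to{\rm Hom}_A(T_0,T)\to\cdots\to{\rm Hom}_A(T_k,T)\to0$. It is exact in degrees $<n$, and $k<n$, so it is exact everywhere. But the last map ${\rm Hom}_A(T_{k-1},T)\to{\rm Hom}_A(T_k,T)$ is a radical map into a nonzero projective, hence not surjective. This is a contradiction unless $k=0$. In the case $k=0$ we have $S_i\cong T_0$, and then ${\rm Hom}_A(S_i,T)\neq0$, so $d=0$, contradicting $k<n\le d$ unless $n=0$; the case $n=0$ is trivial. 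So $T_0,\dots,T_n\neq0$, giving ${\rm pd}C_n=n$. Taking the supremum over $i$ and $n$ yields ${\rm depth}_T\mathcal{C}\le{\rm findim}B^{op}$.

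The tilting case is a special case: a tilting module is self-orthogonal, because ${\rm pd}\le1$ and ${\rm Ext}^1$ vanishes.
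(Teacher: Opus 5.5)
Your proposal is correct and follows essentially the paper's argument. In both, a minimal relative ${\rm add}T$-resolution of $S_i$ from Proposition~\ref{2.6} is dualized by ${\rm Hom}_A(-,T)$, made exact in low degrees by self-orthogonality, and shown minimal as in Lemma~\ref{3.a}; this yields a $B^{op}$-module of projective dimension exactly $n$. The paper separates the infinite-grade and finite-grade cases, which you merge by treating every $n\le{\rm grade}_T S_i$. Your Step~3, which checks that $T_0,\dots,T_n$ are nonzero, spells out a point the paper leaves implicit: it only remarks that ${\rm Ext}^t_A(S_i,T)$ embeds in ${\rm Coker}\,f_t^T$, and says nothing about it in the infinite-grade case.
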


\begin{proof}
 Let $$T_n \stackrel{f_n}{\to} \cdots \to T_1\stackrel{f_1} {\to} T_0 \to S_i \to 0\ (3.1)$$ be a minimal $T$-presentation of the brick $S_i$. We divide the proof into two cases.

\begin{itemize}
  \item [(1)] If there exists a brick $S_i \in \text{Fac}T$ such that $\text{Ext}^j_A (S_i, T) =0$ for all $j\geq 0$, then  findim$ B^{op}=+\infty$.

Applying the functor ${\rm Hom}_A(-, T)$ to $(3.1)$, since $T$ is self-orthogonal, we get the  following long exact sequence
$$0\to \text{Hom}_A(T_0, T)\to \cdots \xrightarrow{f_n^T} \text{Hom}_A(T_n, T)\to \text{coker} f_n ^T \to 0 \ (3.2)$$

\noindent Then by Lemma \ref{3.a} one gets that $(3.2)$ is a minimal projective resolution of $\text{coker} f_n ^T$ and hence $\text{Ext} _B^n (\text{coker}f_n^T, B) \neq 0$. Therefore the projective dimension of coker$f_n^T =n$ and therefore findim$B^{op}= +\infty.$

\item [(2)]  If $\prod _{j\geq 0}\text{Ext} _A^{j} (S_i, T) \neq 0$ holds for each brick $S_i$, then there exists minimal non-negative integer $t$ such that $\text{Ext}^t_A(S_i, T) \neq 0$. We show that three is a module $C\in {\rm mod}B^{op}$ of projective dimension $t$.

If $t=0$, then we have $\text{Ext}^0_A(S_i, T) \neq 0$, therefore the projective dimension of $\text{Hom}_A(S_i, T) \geq 0$.

In the following, we show the assertion holds for $t\geq1$. Applying the functor $\text{Hom}_A(-, T)$ to  $(3.1)$, since $T$ is self-orthogonal and $t$ is minimal, we get the following long exact sequence
$$0\to \text{Hom}_A(T_0, T)\to\cdots \stackrel{{f_t}^T}{\to} \text{Hom}_A(T_t, T)\to \textup{Coker}{f_t}^T \to 0\ (3.3)$$
 which is a minimal projective resolution of $\textup{Coker}{f_t}^T$ by Lemma \ref{3.a}. Moreover, $\text{Ext}^t_A(S_i, T)$ is a submodule of $\textup{Coker}{f_t}^T$. 
 
 In all, we have shown that depth$_T \mathcal{C} \leq \text{findim}B^{op}$.\qedhere
\end{itemize}\noindent
\end{proof}

We should remark that a self-orthogonal support $\tau$-tilting module need not be a tilting module, see \cite[Example 3.6]{Z}. Taking $T=A$ for a finite-dimensional algebra $A$, we have the following corollary \cite[Proposition~1.3]{G}.

\begin{cor}\label{3.4}
Let $A$ be an algebra. We have the inequality
$${\rm depth}A \leq {\rm findim}A^{op}.$$
\end{cor}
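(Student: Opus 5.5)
Corollary~\ref{3.4} is the special case $T=A$ of Theorem~\ref{3.3}, so the plan is to check that this choice satisfies all the hypotheses and that every relative notion reduces to the classical one.

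First, $A_A$ is a support $\tau$-tilting module; in fact it is tilting, since $\mathrm{pd}_A A=0$, $\mathrm{Ext}^1_A(A,A)=0$ and $|A|=|A|$. It is also self-orthogonal, because $\mathrm{Ext}^i_A(A,A)=0$ for all $i\ge 1$. Its endomorphism algebra is $B=\mathrm{End}_A A\cong A$, so $B^{op}\cong A^{op}$ and $\mathrm{findim}\,B^{op}=\mathrm{findim}\,A^{op}$.

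Second, $\mathcal{C}=\mathrm{Fac}A=\mathrm{mod}A$. We need the semi-brick $S$ with $\mathrm{mod}A=\mathrm{Filt}(\mathrm{Fac}S)$ attached to $T=A$ by Theorem~\ref{2.9}. I would take $S=\bigoplus_i S_i$, the direct sum of the simple $A$-modules. Pairwise non-isomorphic simples have no nonzero maps between them and each is a brick, so $S$ is a semi-brick. Every module is filtered by simples, so $\mathrm{Filt}(\mathrm{Fac}S)=\mathrm{mod}A$.

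To identify this $S$ with the one from \cite[Theorem 2.3(2)]{As}, I would use the labelling in Definition~\ref{2.11}. The brick attached to the summand $P_i$ of $A$ is $P_i/\sum_{f\in\mathrm{rad}\mathrm{Hom}_A(A,P_i)}\mathrm{Im}f$. The images of radical maps into $P_i$ together cover $\mathrm{rad}P_i$: they contain $\mathrm{rad}\mathrm{End}(P_i)$ applied to $P_i$, and the images of maps from the $P_j$ with $j\ne i$. Hence this brick is $P_i/\mathrm{rad}P_i$, the simple top of $P_i$. This identification is the only step needing any care.

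Finally, with $T=A$ we have $\mathrm{grade}_A S_i=\inf\{j\ge 0\mid \mathrm{Ext}^j_A(S_i,A)\neq 0\}=\mathrm{grade}\,S_i$. Therefore $\mathrm{depth}_A\mathcal{C}=\sup_i\mathrm{grade}\,S_i=\mathrm{depth}A$, and Theorem~\ref{3.3} gives $\mathrm{depth}A\le\mathrm{findim}A^{op}$.
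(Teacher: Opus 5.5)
Your proposal is correct and follows the paper's own argument: the paper obtains the corollary simply by taking $T=A$ in Theorem~\ref{3.3}. Your additional checks make explicit what the paper leaves implicit. These are that $A_A$ is a self-orthogonal tilting module with $B\cong A$, that the associated semi-brick is $A/\mathrm{rad}A$ (the sum of the simple modules), and that $\mathrm{grade}_A$ is the classical grade.
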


We end this section with the following example.

\begin{exm}\label{3.5} Let $A$ be an algebra given by the quiver \[\xymatrix{
1\ar[r]^{a}&2\ar[r]^{b}&3
}\]  with the relations $ab=0$. Then
\begin{itemize}
    \item
    [(1)] $T=\substack{1\\2}\oplus\substack{2\\3}\oplus \substack{2}$ is a tilting module.
    \item
    [(2)] $B=\textup{End}_A T$ is given by \[\xymatrix{
1\ar[r]^{\alpha}&2\ar[r]^{\beta}&3} \]and $\textup{gldim}B=1$.
    \item
    [(3)] $\textup{Fac}T=\textup{FiltFac}(\substack{1}\oplus \substack{2\\3})$, one gets that
    $\textup{depth}_T \mathcal{C}=1\leq \textup{findim}B^{op}=1$.
\end{itemize}
\end{exm}

\section{Delooping level relative to $T$}

 Throughout this section, $T=\oplus_{i=1}^nT_i$ is a support $\tau$-tilting module with $T_i$ indecomposable and $\text{Fac}T=T(S)=\text{Filt(Fac}S)$, where $S\simeq S_1\bigoplus \cdots \bigoplus S_m$ is the left finite semi-brick in \cite[ Theorem 2.3(2)]{As}
 and $S_i$ is a brick. Denote by $B=\text{End}_A T$. Following \cite{CLZZ}, in this section we define the delooping level relative to $T$ for the subcategory $\mathcal{C}=\text{Fac}T$ and then study the relation between the finitistic dimension of $B^{op}$ and the delooping level of $\mathcal{C}$.

To define the delooping level relative to $T$, we give the following result which gives a bijection between bricks $S_i$ and simple $B$-modules in $\text{Sub}\mathbb{D}T$. We should remark that part of this result is first shown in the proof of \cite[Theorem 2.27]{As}.

\begin{thm}\label{4.2}
 Let $A, T, S, S_i$ and $ B$ be as above. Then the functor
 $$\textup{Hom}_A(T, -): \textup{Fac}T \to \textup{Sub}\mathbb{D}T$$
  induces a bijection between $\{S_1, \dots, S_m\}$ and $\{\text{simple $B$-modules in \textup{Sub}}\mathbb{D}T\}$.
\end{thm}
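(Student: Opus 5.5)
The plan is to reduce to classical tilting theory and then use Proposition \ref{2.8} to control submodules.

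\emph{Reduction and setup.} Put $\bar A=A/\mathrm{ann}\,T$. By Proposition \ref{2.12}, $T$ is a tilting $\bar A$-module. Also $\mathrm{End}_{\bar A}T=B$, and $\mathrm{Fac}T\subseteq \mathrm{mod}\bar A$ is exactly the torsion class $\mathcal T(T_{\bar A})$. By Theorem \ref{2.4}(1), $F=\mathrm{Hom}_A(T,-)$ restricts to an equivalence $\mathrm{Fac}T\simeq \mathcal Y(T_{\bar A})$. Since $T_{\bar A}$ is tilting, $\mathbb D T$ is a cotilting $B$-module, and so $\mathcal Y(T_{\bar A})=\{Y\mid \mathrm{Ext}^1_B(Y,\mathbb DT)=0\}=\mathrm{Sub}\,\mathbb DT$; I will cite this from \cite{ASS}. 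Two further facts will be used repeatedly. First, $\mathrm{Ext}^1_A(T,\mathrm{Fac}T)=0$, so $F$ sends short exact sequences with all terms in $\mathrm{Fac}T$ to short exact sequences. Second, $\mathrm{Sub}\,\mathbb DT$ is closed under submodules.

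\emph{Each $F(S_i)$ is simple.} Since $0\neq S_i\in\mathrm{Fac}T$ and $F$ is an equivalence on $\mathrm{Fac}T$, we have $F(S_i)\neq 0$. Let $0\neq U\subseteq F(S_i)$ be a $B$-submodule. Then $U\in\mathrm{Sub}\,\mathbb DT$, so $U\cong F(X)$ for some $X\in\mathrm{Fac}T$, and the inclusion equals $F(g)$ for some nonzero $g\colon X\to S_i$, by fullness. Proposition \ref{2.8} applies because $X\in\mathrm{Fac}T=T(S)$: it shows that $g$ is surjective and $\mathrm{Ker}\,g\in\mathrm{Fac}T$. So $0\to\mathrm{Ker}\,g\to X\to S_i\to 0$ lies in $\mathrm{Fac}T$, and applying $F$ keeps it exact. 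Hence $F(g)$ is surjective, which gives $U=F(S_i)$. Injectivity of $S_i\mapsto F(S_i)$ is immediate because $F$ is an equivalence and the $S_i$ are pairwise non-isomorphic, since $\mathrm{Hom}_A(S_i,S_j)=0$ for $i\neq j$.

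\emph{Surjectivity onto simples in $\mathrm{Sub}\,\mathbb DT$.} This is the step I expect to be the main obstacle. Let $L$ be a simple $B$-module lying in $\mathrm{Sub}\,\mathbb DT$, and write $L=F(M)$ with $0\neq M\in\mathrm{Fac}T$. By the exactness of $F$ and its behaviour on submodules, simplicity of $L$ says that $M$ admits no short exact sequence $0\to K\to M\to N\to 0$ with $K,N\in\mathrm{Fac}T$ both nonzero. In other words, $M$ is a simple object of the exact category $\mathrm{Fac}T$.

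I will first use $M\in\mathrm{Filt}(\mathrm{Fac}S)$. Taking the top filtration step $M\twoheadrightarrow M/M_{n-1}$, whose kernel $M_{n-1}$ again lies in $\mathrm{Filt}(\mathrm{Fac}S)\subseteq\mathrm{Fac}T$, shows that $M\in\mathrm{Fac}S$. Then there is a nonzero map $S_j\to M$ for some $j$. The aim is to show that this map is an isomorphism. For that I would invoke the argument in the proof of \cite[Theorem 2.27]{As}, where the bricks $S_j$ (the brick labels of Definition \ref{2.11}) are identified as precisely the torsion-simple objects of $T(S)$. Failing that, I would argue directly: choose a minimal surjection $S^k\twoheadrightarrow M$, and use that every nonzero map out of $M$ into some $S_j$ is surjective with torsion kernel by Proposition \ref{2.8}. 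Torsion-simplicity of $M$ should then force $M\cong S_j$. Concretely, the first sub-step is to produce a nonzero map $M\to S_j$, which should come from $M$ being a quotient of $S^k$ and $\mathrm{Hom}$-orthogonality of the semi-brick. Proposition \ref{2.8} then makes the kernel of that map torsion, and torsion-simplicity makes the kernel zero.
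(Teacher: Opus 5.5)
Your reduction to the tilting $\bar A$-module $T$, your proof that each $\mathrm{Hom}_A(T,S_i)$ is simple, and your injectivity argument are correct. The simplicity step is a cleaner version of the paper's step (1): a nonzero submodule is $F(g)\colon F(X)\to F(S_i)$, and Proposition \ref{2.8} together with exactness of $F$ on $\mathrm{Fac}T$ forces $F(g)$ to be onto.

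\textbf{The gap is in surjectivity.} You replace the hypothesis that $L=F(M)$ is simple in $\mathrm{mod}B$ by the statement that $M$ is a simple object of the exact category $\mathrm{Fac}T$. You then try to get $M\cong S_j$ from that alone. But this condition is strictly weaker, and the $S_j$ are not the only such objects.
\begin{itemize}
\item Take $A=K(1\to 2)$ and $T=\substack{1\\2}\oplus 1$ (APR tilting). Then $\mathrm{Fac}T=\mathrm{add}(\substack{1\\2}\oplus 1)$.
\item The semibrick is $S=S_1=\substack{1\\2}$ only. The label at the summand $1$ is zero, because $\substack{1\\2}\to 1$ is a radical epimorphism.
\item The simple module $1$ has no nonzero proper submodules, so it is torsion-simple in your sense. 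Yet $1\not\cong S_1$.
\item Correspondingly, $\mathrm{Hom}_A(T,1)$ is the $2$-dimensional indecomposable projective $B$-module, whose simple top lies outside $\mathrm{Sub}\mathbb{D}T$.
\end{itemize}
The same example defeats your proposed sub-step. The module $1$ is a quotient of $S_1$, but $\mathrm{Hom}_A(1,\substack{1\\2})=0$. So membership in $\mathrm{Fac}S$ plus Hom-orthogonality does not produce a nonzero map $M\to S_j$. In particular, the $S_j$ are not the torsion-simple objects of $T(S)$ in the sense you use, so your appeal to Asai cannot close the argument in that form.

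\textbf{The fix is to use the simplicity you have already proved.} You already have a nonzero $f\colon S_j\to M$, either from $M\in\mathrm{Fac}S$ or directly from the first nonzero filtration step $M_1\subseteq M$. Since $F$ is faithful on $\mathrm{Fac}T$, the map $F(f)\colon F(S_j)\to F(M)=L$ is a nonzero homomorphism between simple $B$-modules, hence an isomorphism. Since $F\colon\mathrm{Fac}T\to\mathrm{Sub}\mathbb{D}T$ is an equivalence, $f$ itself is an isomorphism. This is exactly how the paper concludes. It uses the full simplicity of $L$ in $\mathrm{mod}B$ rather than only its weaker shadow in $\mathrm{Fac}T$.
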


\begin{proof} We divide the proof into two steps.

 (1) We show that $\textup{Hom}_A(T, S_i)$ is simple. This is actually shown by Asai in \cite{As}. Here we give a new proof.

On the contrary, suppose that $\textup{Hom}_A(T, S_i)$ is not simple. Since $S_i$ is a brick and a direct summand of the left finite semi-brick $S$, one gets an exact sequence
$$ 0 \to L_i \to T_i \to S_i \to 0 \ \ \ \ (4.1)$$
  with $T_i$ an indecomposable direct summand of $T$ and $L_i \in \textup{Fac}T$ by Proposition \ref{2.8}. Applying $\textup{Hom}_A(T, -)$ to $(4.1)$, one gets an exact sequence
$$ 0 \to \textup{Hom}_A(T, L_i) \to \textup{Hom}_A(T, T_i) \to \textup{Hom}_A(T, S_i) \to 0.$$
On the other hand, one has the following exact sequence
$$0 \to R  \to \textup{Hom}_A(T, T_i) \to S \to 0$$
with $S$ simple in $\text{mod} B$.
We have the following commutative diagram

\xymatrix{
0  \ar[r] &\textup{Hom}_A(T, L_i) \ar[r]\ar[d]^{\theta}& \textup{Hom}_A(T, T_i)\ar[r]\ar@{=}[d]
& \textup{Hom}_A(T, S_i) \ar[r]\ar[d]^{\pi} & 0 \\
 0 \ar[r] & R \ar[r] &\textup{Hom}_A(T, T_i) \ar[r]& S \ar[r]& 0.
}

By using the snake lemma, one gets that $\theta$ is a monomorphism and $\pi$ is an epimorphism with $\textup{Coker}\theta \cong \textup{Ker} \pi = N$. So the exact sequence $$0 \to N \stackrel{a}{\to} \textup{Hom}_A(T, S_i) \to S \to 0$$ implies that $N \in \textup{Sub}\mathbb{D}T$. Since $\textup{Hom}_A(T, -): \textup{Fac}\ T \to \textup{Sub}\mathbb{D}T$ is an equivalence, there exists $ M \in \textup{Fac}T$ such that $N \cong \textup{Hom}_A(T, M)$.
Then one gets an exact sequence
$$0 \to \textup{Hom}_A(T,M) \stackrel{a}{\to} \textup{Hom}_A(T, S_i) \to S \to 0. \ \ \ (4.2)$$
So there exists $ b: M \to S_i \ne 0$ such that $a= \textup{Hom}_A(T, b)$.

By Proposition \ref{2.8}, $b$ is surjective and $Q=\textup{Ker} b\in \textup{Fac}T $, then one gets an exact sequence
$$ 0 \to Q  \to M \stackrel{b}{\to} S_i \to 0.\ \ \ \ \ \ (4.3)  $$
Applying $\textup{Hom}_A(T, -)$ to $(4.3)$, we get the following exact sequence
$$0 \to \textup{Hom}_A(T, Q) \to \textup{Hom}_A(T, M) \xrightarrow{a} \textup{Hom}_A(T, S_i) \to 0.\ \ \ \ \ (4.4)$$
Comparing $(4.2)$ and $(4.4)$, one gets that the map $a$ is an isomorphism which implies that $\textup{Hom}_A(T, Q) = 0$, and hence $Q=0$. Then $M \cong S_i$ implies $S=0$ by using $(4.2)$, contradiction.

(2) We show for any $S'$ simple in $\textup{Sub}\mathbb{D}T$, there is $S_i \in \{S_1, \dots, S_m\}$ such that $\textup{Hom}_A(T, S_i) \cong S'$.

Since $\textup{Hom}_A(T, -): \textup{Fac}T \to \textup{Sub}\mathbb{D}T$ is an equivalence by Proposition \ref{2.12} and Theorem \ref{2.4}, then there exists $ M \in \textup{Fac}T$ such that $\textup{Hom}_A(T, M) \cong S'$.
So the fact $$ \textup{Hom}_A(M, M) \cong \textup{Hom}_B(\textup{Hom}_A(T, M), \textup{Hom}_A(T, M)) \cong \textup{Hom}_B(S', S')
\cong K$$ implies that $M$ is a brick.

Since $M \in \textup{Fac}T = \textup{Filt} (\textup{Fac}S)$, then there exists a brick $S_i$ which is a direct summand of $S$ such that $S_i \xrightarrow{f} M \ne 0$.
Since $\textup{Hom}_A(T, -): \textup{Fac}T \to \textup{Sub}\mathbb{D}T$ is an equivalence,
one gets
\begin{align*}
\textup{Hom}_A(T, -): &\textup{Hom}_A(S_i, M) \to \textup{Hom}_B (\textup{Hom}_A(T, S_i), \textup{Hom}_A(T, M))\\
\ \ \ \ \ \ &\ \ \ \ \ \ \ \ \ \ \ \ \ \ \ \  f \to \textup{Hom}_A(T, f)
\end{align*}
induces an isomorphism.
So the map $\textup{Hom}_A(T, f): \textup{Hom}_A(T, S_i){\rightarrow} \textup{Hom}_A(T, M)$ is not zero, which implies that $\textup{Hom}_A(T, f)$ is an isomorphism since both $\textup{Hom}_A(T, S_i)$ and $\textup{Hom}_A(T, M)$ are simple. Using $\textup{Hom}_A(T, -): \textup{Fac}T \to \textup{Sub}\mathbb{D}T$ is an equivalence again, one gets that $S_i\cong M$.
\end{proof}

As a consequence, we have the following corollary.

\begin{cor}\label{4.3} Let $A, S, T$ and $B$ be as above and $|T|=|A|=n$. Then every simple $B$-module is in $\textup{Sub}\mathbb{D}T$ if and only if $T\cong A$.
\end{cor}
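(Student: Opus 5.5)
The plan is to prove the two implications separately. The direction $T\cong A$ $\Rightarrow$ (every simple $B$-module lies in $\textup{Sub}\mathbb{D}T$) is immediate: then $B=\textup{End}_A A\cong A$ and $\mathbb{D}T=\mathbb{D}A$ is an injective cogenerator of $\textup{mod}A$. Hence $\textup{Sub}\mathbb{D}T=\textup{mod}B$, and in particular it contains every simple $B$-module.

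For the converse, I count simples. Since $A$ and $T$ are basic and $|T|=n$, the algebra $B=\textup{End}_AT$ is basic with exactly $n$ pairwise non-isomorphic simple modules. If all of them lie in $\textup{Sub}\mathbb{D}T$, then Theorem \ref{4.2} gives a bijection between $\{S_1,\dots,S_m\}$ and these $n$ simples, so $m=n$. Thus the left finite semi-brick $S$ attached to $T$ has exactly $n=|A|$ brick summands.

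Next I translate $m=n$ into mutation data, using Asai's description of $S$ recalled in Definition \ref{2.11}. The bricks $S_i$ are exactly the labels of the arrows of the support $\tau$-tilting quiver that start at $T$, i.e. the mutations $T\to N$ at an indecomposable summand $T_i$ with $\textup{Fac}N\subsetneq\textup{Fac}T$. Concretely, $S_i$ is the top of $T_i$ modulo the radical maps from $T$, and $S_i\neq 0$ exactly when that mutation decreases $\textup{Fac}$. Since $|T|=|A|$, the pair $(T,0)$ is $\tau$-tilting, and by \cite{AIR} its mutation at each of the $n$ summands exists and is either a left or a right mutation. So $m=n$ means that all $n$ mutations of $T$ go downward, and $T$ has no incoming arrow in the support $\tau$-tilting quiver.

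Finally I conclude $T\cong A$. By \cite{AIR}, the support $\tau$-tilting quiver coincides with the Hasse quiver of the partial order given by inclusion of $\textup{Fac}$, and $A$ is its unique maximum. If $T\not\cong A$, then $\textup{Fac}T\subsetneq\textup{Fac}A=\textup{mod}A$. By \cite[Theorem 2.35]{AIR}, there is then a left mutation $N$ of $T$ with $\textup{Fac}T\subsetneq\textup{Fac}N$, so at least one arrow ends at $T$, contradicting the previous step. Hence $T\cong A$.

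The main obstacle is the identification in the third paragraph. One must check that the number of brick summands of Asai's semi-brick equals the number of outgoing arrows of $T$; that is, every outgoing arrow contributes a nonzero brick label, these labels are pairwise distinct, and they are exactly $S_1,\dots,S_m$. This is \cite[Theorem 2.3 and Proposition 2.9]{As}, which I would cite rather than reprove. If that citation is unavailable in usable form, I would argue directly: $S_i\neq 0$ precisely when $T_i\notin\textup{Fac}(T/T_i)$, and show that every $T_i$ lies outside $\textup{Fac}(T/T_i)$ forces every indecomposable projective into $\textup{add}T$.
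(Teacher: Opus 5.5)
Your proof is correct. After the shared first step, where Theorem~\ref{4.2} turns the hypothesis into $m=n$, it follows a different route from the paper.

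The paper argues inside tilting theory:
\begin{enumerate}
\item $\textup{Sub}\mathbb{D}T$ is closed under extensions, so containing all simples forces $\textup{Sub}\mathbb{D}T=\textup{mod}B$.
\item Transporting this through the equivalence of Theorem~\ref{2.4} makes $\textup{Fac}T=\textup{Filt}S$ wide. Being also a torsion class, it is a Serre subcategory.
\item Sincerity of $T$ puts every simple $A$-module into $\textup{Fac}T$. Hence $\textup{Fac}T=\textup{mod}A$ and $T\cong A$.
\end{enumerate}

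You instead read $m$ as the out-degree of $T$ in the support $\tau$-tilting quiver. Then $m=n$ together with $n$-regularity makes $T$ a source, and \cite[Theorem 2.35]{AIR} shows the only source is $A$.

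The paper's route is more elementary: it uses no mutation theory, only the tilting equivalence and sincerity. Yours needs more external input, but it gives a sharper statement. The number of simple $B$-modules in $\textup{Sub}\mathbb{D}T$ equals the number of arrows leaving $T$. The remaining $n-m$ simples lie in the torsion class complementary to $\textup{Sub}\mathbb{D}T$, and they count the arrows entering $T$.

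The identification you flag as the main obstacle is mild:
\begin{itemize}
\item $S=T/\textup{rad}_BT$ is exactly the form of Asai's bijection that the paper cites.
\item Nakayama's lemma for $T_i$ over the local ring $\textup{End}_A(T_i)$ gives $T_i/\textup{rad}_BT_i\neq0$ if and only if $T_i\notin\textup{Fac}(T/T_i)$.
\item By \cite{AIR}, $T_i\notin\textup{Fac}(T/T_i)$ is exactly when the mutation at $T_i$ decreases $\textup{Fac}$.
\item The labels are pairwise non-isomorphic because $S$ is a semibrick.
\end{itemize}

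One caution on terminology. The $N$ with $\textup{Fac}T\subsetneq\textup{Fac}N$ in your last step is a \emph{right} mutation of $T$ in the convention of \cite{AIR}. The paper's Definition~\ref{2.10} swaps left and right, so fix one convention when you write this up.
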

\begin{proof} If $T\cong A$, there is nothing to show. Conversely, by Theorem \ref{4.2}, the left finite semi-brick $S$ should have $n$ non-isomorphic indecomposable direct summands and $S=S_1\oplus S_2\oplus\dots S_n$. Since $\textup{Sub}\mathbb{D}T$  is closed under extensions and every simple $B$-module is in $\textup{Sub}\mathbb{D}T$, then one gets that $\textup{Sub}\mathbb{D}T={\rm mod}B$. By Theorem \ref{2.4}, ${\rm mod} B\simeq {\rm Fac}T$ as exact categories. One gets that ${\rm Fac}T={\rm Filt} S$ is a wide subcategory by \cite[Proposition 2.17]{As}. Since ${\rm Fac}T$ is a torsion class, one gets that it is a Serre subcategory, that is, closed under submodules, factor modules and extensions. Since $T$ is sincere, then every simple $A$-module is a composition factor of $T$, and hence in ${\rm Fac}T$. Then ${\rm Fac}T={\rm mod}A={\rm Fac}A$, and hence $T\cong A$ holds.
\end{proof}

Now we show some basic homological properties between $\text{Fac}T$ and $\text{mod}B$ for later use.

\begin{prop}\label{4.4}
Let $A, T, S, S_i$ and $ B$ be as above.
\begin{itemize}
\item[(1)] If $\cdots\to T_1\to T_0\to N\to 0 $ is a minimal $T$-resolution of $N$ in $\mathcal{C}$, then $\cdots\to \textup{Hom}_A(T,T_1)\to \textup{Hom}_A(T, T_0)\to\textup{Hom}_A(T, N)\to 0$ is a minimal projective resolution of $\textup{Hom}_A(T, N)$.
\item [(2)] For any $n>d\geq1$ and $M\in\textup{mod}B^{op}$, we have
$\textup{Tor}^B_n (\textup{Hom}_A(T, S_i), M)\cong \textup{Tor}^B_{n-d} (\textup{Hom}_A (T, \Omega^d_T S_i), M).$
\end{itemize}
\end{prop}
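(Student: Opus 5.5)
For (1), I will apply $\textup{Hom}_A(T,-)$ to the minimal $T$-resolution $\cdots\to T_1\xrightarrow{f_1} T_0\xrightarrow{f_0} N\to 0$ supplied by Proposition \ref{2.6}. I split it into the short exact sequences $0\to \Omega^{j}_T N\to T_j\to \Omega^{j-1}_T N\to 0$, where each $\Omega^j_T N\in\textup{Fac}T$. By Proposition \ref{2.12}, $T$ is a tilting $\bar A$-module with $\bar A=A/\textup{ann}T$, and $\textup{Fac}T=\mathcal T(T_{\bar A})$. Hence $\textup{Ext}^1_{\bar A}(T,\Omega^j_T N)=0$, so $\textup{Hom}_A(T,-)$ keeps each short sequence exact. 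Here $\textup{Hom}_A=\textup{Hom}_{\bar A}$ on $\bar A$-modules, and I will use this identification throughout. Splicing gives an exact complex $\cdots\to\textup{Hom}_A(T,T_1)\to\textup{Hom}_A(T,T_0)\to\textup{Hom}_A(T,N)\to 0$. Its terms are projective $B$-modules, since $\textup{Hom}_A(T,\textup{add}T)=\textup{add}B_B$.

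For minimality, I need each map $\textup{Hom}_A(T,T_j)\to\textup{Hom}_A(T,\Omega^{j-1}_TN)$ to be a projective cover, equivalently right minimal. A minimal right $\textup{add}T$-approximation $g:T_j\to X$ is right minimal. The functor $\textup{Hom}_A(T,-)$ is fully faithful on $\textup{Fac}T$ by Theorem \ref{2.4}, so any endomorphism $\phi$ of $\textup{Hom}_A(T,T_j)$ with $\textup{Hom}_A(T,g)\phi=\textup{Hom}_A(T,g)$ comes from some $\psi\in\textup{End}(T_j)$ with $g\psi=g$. Then $\psi$ is an automorphism, and therefore so is $\phi$. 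This gives minimality.

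For (2), I will use dimension shifting with Tor. Tensoring the projective resolution from (1) of $\textup{Hom}_A(T,S_i)$ with $M\in\textup{mod}B^{op}$ computes $\textup{Tor}^B_*(\textup{Hom}_A(T,S_i),M)$. The truncation starting at $\textup{Hom}_A(T,T_d)$ is a projective resolution of $\textup{Hom}_A(T,\Omega^d_TS_i)$. Here I use the convention that $\Omega^d_T S_i$ is the image of $T_d\to T_{d-1}$, so that the resolution in (1) restricted to degrees $\geq d$ resolves it; I will align the paper's indexing ``$(i+1)$-th syzygy'' so that the shift is exactly $d$. Concretely, the short exact sequences $0\to\textup{Hom}_A(T,\Omega^{j}_TS_i)\to\textup{Hom}_A(T,T_j)\to\textup{Hom}_A(T,\Omega^{j-1}_TS_i)\to0$ and the vanishing of $\textup{Tor}^B_{\ge1}$ on projectives give $\textup{Tor}^B_n(\textup{Hom}_A(T,\Omega^{j-1}_TS_i),M)\cong\textup{Tor}^B_{n-1}(\textup{Hom}_A(T,\Omega^j_TS_i),M)$ for $n\geq2$. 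Iterating $d$ times gives the claim whenever $n-d\geq1$, which is guaranteed by $n>d$.

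The main obstacle is the bookkeeping for $\Omega$. I must confirm that the syzygies stay in $\textup{Fac}T$, which is exactly Proposition \ref{2.6}, and get the index shift right. I also need to justify that working over $\bar A$ does not change $\textup{Hom}$ or the endomorphism ring $B$.
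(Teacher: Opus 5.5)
Your proposal is correct and takes essentially the same route as the paper. Like the paper, you apply $\textup{Hom}_A(T,-)$ to the $T$-resolution from Proposition \ref{2.6}, use that the syzygies lie in $\textup{Fac}T$ (where $\textup{Hom}_A(T,-)$ is an equivalence onto $\textup{Sub}\mathbb{D}T$) to get a minimal projective resolution, and then dimension-shift $\textup{Tor}^B$ along the resulting short exact sequences. Your explicit right-minimality argument, and your check that $n>d$ keeps every shifted degree at least $2$, fill in what the paper states in one line.
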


\begin{proof}(1) By Proposition \ref{2.6}, one gets that every kernel in a minimal $T$-resolution is in $\mathcal{C}$. Applying the functor ${\rm Hom}_A(T, -)$  to the minimal $T$ presentation $$\cdots\to T_1\to T_0\to N\to 0 ,$$ one gets the following long exact sequence $$\cdots\to \textup{Hom}_A(T,T_1)\to \textup{Hom}_A(T, T_0)\to\textup{Hom}_A(T, N)\to 0.$$ It is a minimal projective resolution since $\text{Hom}_A(T,-)$ is an equivalence.

(2) Denote $L=\text{Hom}_A(T, S_i)$.
Denote by $T_* \xrightarrow{\simeq} S_i$ a $T$-resolution of a brick $S_i$ by Proposition \ref{2.6}. Then one has the following exact sequence
$$0\to \Omega_T^d S_i \to  T_{d-1} \to \cdots \to T_1 \to T_0 \to S_i \to 0\ \ \ \ \ (4.5)$$ with $\Omega_T^d S_i\in\text{Fac}T$.

Applying  $\text{Hom}_A (T, -)$ to  $(*5)$, we have the following long exact sequence
 $$0\to \text{Hom}_A(T,\Omega_T^d S_i) \to  \text{Hom}_A(T,T_{d-1} )\to \cdots \to \text{Hom}_A(T, T_0) \to \text{Hom}_A(T, S_i) \to 0\ \ \ \ (4.6),$$
which is a projective resolution of $L=\text{Hom}_A(T, S_i)$.

Applying the functor  $~_-\otimes_B M$ to  $(4.6)$, one gets the following long exact sequence:
\begin{align*}
\cdots\to\textup{Tor}_n^B(P_0, M) \to \textup{Tor}_n^B(L, M)\to \textup{Tor}_{n-1}^B(\Omega^1L, M)\to \textup{Tor}_{n-1}^B(P_0, M)\to\cdots.
\end{align*}
By using dimension shifting, we have
$$\text{Tor}_n^B (L, M) \cong \text{Tor}_{n-1}^B (\Omega^1 L, M) \cong \text{Tor}_{n-2}^B (\Omega^2 L, M) \cong \text{Tor}_{n-d}^B (\Omega^d L, M).
$$
We are done.
\end{proof}

  Let $A$ be an algebra, $M\in \text{mod}A$ and let $ k \in \mathbb{Z^+}$. Recall from \cite{GuI} that the $k$-delooping level of $M$ denoted by $k$-$\text{dell}M$ is defined as the infimum of $d\geq 0$ such that $\Omega^d M$ is a stable retract of $\Omega^{d+k}N$ for $N$ in $\text{\underline{mod}}A$. The $k$-delooping level of $A$ denoted by $k$-\text{dell}$A$ is defined as the supremum of the $k$-delooping level of simple $A$-modules. Putting $k=1$, one gets the delooping level of $M$ and $A$ introduced in \cite{G}.

For a module $M$ in $\text{mod}A$, denote by $\text{rad} M$ the radical of $M$. By Proposition \ref{2.6} and Theorem \ref{4.2}, we are able to give the following definition.

\begin{defn}\label{4.1} Let $A$, $T$ and $S_i$ be as above. Let $k$ be a positive integer.
\begin{itemize}
\item [(1)] For any $ M\in \mathcal{C}=\text{Fac}T$, the $k$-delooping level of $M$ relative to $T$ denoted by $k$-${\rm dell}_T M$ is defined as the infimum of integer $d$ such that $\Omega_{T}^{d} M$ is a stable retract of $\Omega_T^{d+k}N$ for some $N\in \text{Fac}T$.
\item [(2)] The $k$-delooping level of $\mathcal{C}$ relative to $T$ is defined as the supremum of the $k$-delooping level relative to $T$ of $S_i$ for $1\leq i\leq m$ and the $(k+1)$-delooping level of $N_j$ $+1$ for $m+1\leq j\leq n$, where $\text{Hom}_A(T,N_j)\cong \text{rad}\text{Hom}_A(T,T_j)$ .
\item [(3)] The global $k$-delooping level of $\mathcal{C}$ relative to $T$ is defined as the  supremum of the $k$-delooping level relative to $T$ of $M$ for all $M\in\mathcal{C}$.
\end{itemize}
\end{defn}
 If $T$=$A$, then one gets that the semi-brick $S$ should be the direct sum of simple $A$-modules. Therefore the $k$-delooping level of $\mathcal{C}$ is the classical case introduced by \cite{GuI}. It is easy to see that the $k$-delooping level of $\mathcal{C}$ relative to $T$ is no more than the global $k$-delooping level of $\mathcal{C}$ relative to $T$. From now on, we use $k$-$\text{dell}_TM$, $k$-$\text{dell}_T\mathcal{C}$ and global-$k$-$\text{dell}_T\mathcal{C}$ to denote the three notations above.

Now we are in a position to show the main result in this section.

\begin{thm}\label{4.5}
   Let $A, T, S, S_i$ and $ B$ be as above. Let $k$ be a positive integer. Then
\begin{itemize}
    \item
    [(1)] The $k$-delooping level of $B$ is no more than the $k$-delooping level of $\mathcal{C}$ relative to $T$, that is, $k$-${\rm dell}B\leq k$-${\rm dell}_T\mathcal{C}$.
    \item
    [(2)] The finitistic dimension of $B^{op}$ is no more than the delooping level of $\mathcal{C}$ relative to $T$, that is, ${\rm findim}B^{op}\leq {\rm dell}_T \mathcal{C}$.
\end{itemize}
\end{thm}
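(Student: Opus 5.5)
The approach is to transport everything along the equivalence $F=\textup{Hom}_A(T,-):\textup{Fac}T\to \textup{Sub}\mathbb{D}T$ from Proposition \ref{2.12} and Theorem \ref{2.4}, applied to $T$ viewed as a tilting $\bar A$-module, and then bound the $k$-delooping level of every simple $B$-module. By Proposition \ref{4.4}(1), $F$ sends a minimal $T$-resolution in $\mathcal{C}$ to a minimal projective resolution in $\textup{mod}B$. Hence $F(\Omega^d_T M)\cong \Omega^d_B F(M)$ for all $M\in\mathcal{C}$ and $d\geq 0$. Moreover, $F$ sends $\textup{add}T$ to $\textup{proj}B$ and is fully faithful, so it induces a fully faithful functor $\underline{\mathcal{C}}_T\to \underline{\textup{mod}}B$. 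In particular, if $\Omega^d_T M$ is a stable retract of $\Omega^{d+k}_T N$ in $\underline{\mathcal{C}}_T$, then $\Omega^d_B F(M)$ is a stable retract of $\Omega^{d+k}_B F(N)$ in $\underline{\textup{mod}}B$. This gives $k$-$\textup{dell}_B F(M)\leq k$-$\textup{dell}_T M$ for every $M\in\mathcal{C}$.

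For part (1), I would split the simple $B$-modules into two families. The first family consists of the simples lying in $\textup{Sub}\mathbb{D}T$. By Theorem \ref{4.2} these are exactly the modules $F(S_i)$ for $1\leq i\leq m$, so the previous paragraph gives $k$-$\textup{dell}_B F(S_i)\leq k$-$\textup{dell}_T S_i$. The second family consists of the remaining simples, which I index as $\textup{top}\,F(T_j)$ for $m+1\leq j\leq n$. For these I would use the short exact sequence $0\to \textup{rad}F(T_j)\to F(T_j)\to \textup{top}F(T_j)\to 0$. Since $\textup{rad}F(T_j)$ is a submodule of $F(T_j)$, it lies in $\textup{Sub}\mathbb{D}T$ and equals $F(N_j)$ for some $N_j\in\mathcal{C}$, exactly as in Definition \ref{4.1}(2). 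Because $F(T_j)$ is a projective cover, $\Omega_B\,\textup{top}F(T_j)\cong F(N_j)$. If $\Omega^{d}_T N_j$ is a stable retract of $\Omega^{d+k+1}_T N$, then transporting this relation shows that $\Omega^{d+1}_B \textup{top}F(T_j)$ is a stable retract of $\Omega^{d+1+k}_B F(N)$. Hence $k$-$\textup{dell}_B\,\textup{top}F(T_j)\leq d+1$, and the $(k+1)$-delooping level of $N_j$ plus one bounds the $k$-delooping level of this simple, matching the definition. Taking the supremum over both families gives $k$-$\textup{dell}B\leq k$-$\textup{dell}_T\mathcal{C}$.

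For part (2), I would set $k=1$ and combine part (1) with Theorem \ref{1.1} (\cite[Proposition 1.3]{G}) applied to $B$. This yields $\textup{findim}B^{op}\leq \textup{dell}B\leq \textup{dell}_T\mathcal{C}$.

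I expect the main obstacle to be the compatibility of stable categories. A retraction in $\underline{\mathcal{C}}_T$ is only a retraction modulo maps factoring through $\textup{add}T$. I would check that $F$ sends such maps to maps factoring through projectives, which holds because $F(\textup{add}T)=\textup{add}B_B$. I would also check that $\Omega_T$ corresponds to $\Omega_B$ up to projective summands, which follows from minimality in Proposition \ref{4.4}(1). A second, smaller point is the bookkeeping of shifts for the simples not in $\textup{Sub}\mathbb{D}T$: the $+1$ and the $k+1$ in Definition \ref{4.1}(2) must compensate for the one syzygy step in exactly the way described above.
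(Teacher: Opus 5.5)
Your proposal is correct and follows essentially the same route as the paper. Like the paper, you:
\begin{itemize}
\item transport stable retracts along $\textup{Hom}_A(T,-)$ using Proposition \ref{4.4}(1);
\item bound the simples $\textup{Hom}_A(T,S_i)$ via Theorem \ref{4.2};
\item treat the remaining simples through $0\to \textup{rad}\,\textup{Hom}_A(T,T_j)\to \textup{Hom}_A(T,T_j)\to D_j\to 0$, using the $(k+1)$-delooping level of $N_j$ plus one;
\item deduce (2) from (1) with $k=1$ and \cite[Proposition 1.3]{G}.
\end{itemize}
The paper only asserts that stable retracts are preserved; your explicit check that maps factoring through $\textup{add}T$ go to maps factoring through projectives supplies that detail.
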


\begin{proof} (1) We divide the proof into three steps.

(a) For any $X\in \mathcal{C}$, we show that  $k$-${\rm dell}_B {\rm Hom}(T, X)\leq $ $k$-${\rm dell}_T X$ for all $k\geq 1$.

If $k$-${\rm dell}_T X=\infty$, then there is nothing to show. Now assume that $k$-${\rm dell}_T X=t<\infty$. Then there is a module $Y\in\mathcal{C}$ such that $\Omega^{t}_T X$ is a stable retract of $\Omega^{k+t}_T Y$ in $\underline{\mathcal{C}}_T$. Then by Theorem \ref{2.4} and Proposition \ref{4.4}(1), one gets that $\Omega^{t}_B {\rm Hom}_A(T, X)$ is a stable retract of $\Omega^{k+t}_B {\rm Hom}_A(T, Y)$ in $\underline{\rm{mod}}B$. The assertion holds.

(b) For any simple $D_i \in{\rm mod} B$ with $1\leq i\leq m$, if $ D_i\cong {\rm Hom}(T, S_i)$ by Theorem \ref{4.2}, then one gets that
the $k$-delooping level of $\text{Hom}_A(T, S_i)$ is no more than that of $S_i$ for $1\leq i\leq m$ by (a).

(c)  For any simple $D_j \not\cong {\rm Hom}_A(T, S_j)\in{\rm mod} B$ with $m+1\leq j\leq n$, we show $k$-${\rm dell}_B D_j\leq (k+1)$-${\rm dell}_T N_j +1$, where $\text{Hom}_A(T, N_j)\cong\text{rad}\text{Hom}_A(T,T_j)$.

Since $\text{Hom}_A(T,T_j)$ is an indecomposable projective $B$-module, one has the following exact sequence:
$$0\rightarrow \text{rad}\text{Hom}_A(T,T_j)\to \text{Hom}_A(T,T_j)\to D_j\to 0 $$
By Theorem \ref{2.4}, $\text{Hom}_A(T,-): \text{Fac}T\to\text{Sub}\mathbb{D}T$ is an equivalence and $\text{Sub}\mathbb{D}T$ is a torsionfree class,
and hence $\text{rad}\text{Hom}_A(T,T_j)$ is in $\text{Sub}\mathbb{D}T$. So there is a module $N_j\in\mathcal{C}$ such that $\text{Hom}_A(T, N_j)\cong\text{rad}\text{Hom}_A(T,T_j)$.

If $(k+1)$-${\rm dell}_T N_j=d<\infty$, then one gets that $\Omega_T^{d}N_j$ is a stable retract of $\Omega_T^{d+k+1}L_j$ in $\underline{\mathcal{C}}_T$ for some $L_j\in \mathcal{C}$. Applying the functor ${\rm Hom}_A(T,-)$, one has that $\Omega_B^{d}{\rm Hom}_A(T, N_j)$ is a stable retract of $\Omega_B^{d+k+1}U$ in $\underline{\rm mod}B$ with $U\in{\rm Sub}\mathbb{D}T$, therefore $\Omega_B^{d+1}D_j$ is a stable retract of $\Omega_B^{d+k+1}U$ in $\underline{\rm mod}B$. So $k$-${\rm dell}_B D_j\leq d+1$. Using a similar argument, the inequality holds for $d=\infty$.
We are done.

(2) Let $k=1$. It is a straight result of (1) and \cite[Proposition 1.3,]{G}.
\end{proof}

We have the following straight corollary which is a $\tau$-tilting version of Theorem \ref{1.1}.

\begin{cor}\label{4.6}
 Let $A$ be a finite-dimensional algebra, $T\in{\rm mod}A$ a self-orthogonal support $\tau$-tilting module (resp. a tilting module) with $\mathcal{C}={\rm Fac}T$ and $B={\rm End}_A T$. Then
 ${\rm depth}_T \mathcal{C}\leq {\rm findim} B^{op}\leq {\rm dell}_T\mathcal{C}$.
\end{cor}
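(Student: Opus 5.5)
This corollary is obtained by putting together the two inequalities already established, so the plan is to check that the hypotheses of each earlier result are met by $T$ and then chain them.

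For the left inequality ${\rm depth}_T\mathcal{C}\leq {\rm findim}B^{op}$, I invoke Theorem \ref{3.3} directly. Its hypotheses are exactly that $T$ is a self-orthogonal support $\tau$-tilting module (or a tilting module) with ${\rm Fac}T={\rm Filt}({\rm Fac}S)$ for a semi-brick $S=S_1\oplus\cdots\oplus S_m$. By Theorem \ref{2.9} such a left finite semi-brick always exists for a support $\tau$-tilting $T$. For the tilting case I note two facts. First, a tilting module is $\tau$-tilting, since ${\rm pd}_A T\leq 1$ and ${\rm Ext}^1_A(T,T)=0$ give ${\rm Hom}_A(T,\tau T)\cong \mathbb{D}{\rm Ext}^1_A(T,T)=0$. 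Second, a tilting module is automatically self-orthogonal, because ${\rm pd}_A T\leq 1$ kills ${\rm Ext}^i_A(T,T)$ for $i\geq 2$. Hence both readings of the statement fall under Theorem \ref{3.3}.

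For the right inequality ${\rm findim}B^{op}\leq {\rm dell}_T\mathcal{C}$, I apply Theorem \ref{4.5}(2). That result needs only that $T$ is support $\tau$-tilting, with ${\rm dell}_T\mathcal{C}$ meaning $1$-${\rm dell}_T\mathcal{C}$ as in Definition \ref{4.1}. Its proof goes through Theorem \ref{4.5}(1) with $k=1$, which gives ${\rm dell}B\leq {\rm dell}_T\mathcal{C}$, combined with Gelinas' bound ${\rm findim}B^{op}\leq {\rm dell}B$ from Theorem \ref{1.1} applied to the algebra $B$. Self-orthogonality is not used on this side.

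Concatenating the two inequalities gives ${\rm depth}_T\mathcal{C}\leq {\rm findim}B^{op}\leq {\rm dell}_T\mathcal{C}$. The only point needing any care is the consistency of the semi-brick $S$: Theorem \ref{3.3} and Definition \ref{4.1} must use the same left finite semi-brick attached to $T$. I fix $S$ once via Theorem \ref{2.9} so that ${\rm depth}_T\mathcal{C}$ and ${\rm dell}_T\mathcal{C}$ refer to the same bricks $S_i$. Beyond this bookkeeping I expect no real obstacle.
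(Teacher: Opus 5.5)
Your proposal is correct and is exactly the paper's argument: the paper obtains this corollary in one line by combining Theorem \ref{3.3} for the left inequality with Theorem \ref{4.5}(2) for the right one. Your extra checks are bookkeeping the paper leaves implicit: that a tilting module is a self-orthogonal $\tau$-tilting module, and that the same semi-brick $S$ from Theorem \ref{2.9} is used in both ${\rm depth}_T\mathcal{C}$ and ${\rm dell}_T\mathcal{C}$.
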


\begin{proof} This is the straight result of Theorem \ref{3.3} and Theorem \ref{4.5}.
\end{proof}

We also have the following corollary.

\begin{cor}\label{4.7}
 Let $A,T, B$ and $\mathcal{C}$ be as in Corollary \ref{4.6}.
\begin{itemize}
    \item
    [(1)] $\textup{depth}_T \mathcal{C} = 0$.
    \item
    [(2)] $\textup{findim}B^{op} = 0$.
    \item
    [(3)] $\textup{dell}_T \mathcal{C} = 0$.
\end{itemize}
Then we have $(3)\Rightarrow(2)\Rightarrow (1)$.
\end{cor}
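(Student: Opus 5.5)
The plan is to read each implication off Corollary \ref{4.6}, which gives
\[
{\rm depth}_T \mathcal{C}\leq {\rm findim} B^{op}\leq {\rm dell}_T\mathcal{C}
\]
for a self-orthogonal support $\tau$-tilting module $T$, together with the fact that all three invariants are non-negative. Non-negativity holds for the following reasons. The $T$-grade is an infimum over $i\geq 0$. The finitistic dimension is a supremum over projective dimensions, and it is non-negative because $B^{op}$ itself is projective. The delooping level relative to $T$ is an infimum over integers $d\geq 0$, since relative syzygies $\Omega^d_T$ only make sense for $d\geq 0$.

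For $(3)\Rightarrow(2)$: suppose ${\rm dell}_T\mathcal{C}=0$. The right-hand inequality of Corollary \ref{4.6}, which is Theorem \ref{4.5}(2) with $k=1$, then gives ${\rm findim}B^{op}\leq 0$. Since ${\rm findim}B^{op}\geq 0$, we get ${\rm findim}B^{op}=0$.

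For $(2)\Rightarrow(1)$: suppose ${\rm findim}B^{op}=0$. Theorem \ref{3.3} gives ${\rm depth}_T\mathcal{C}\leq {\rm findim}B^{op}=0$. Since ${\rm depth}_T\mathcal{C}$ is a supremum of grades, each of which is $\geq 0$, it is non-negative, so ${\rm depth}_T\mathcal{C}=0$.

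The point most likely to need care is whether the cited inequalities hold in exactly the generality needed. The implication $(2)\Rightarrow(1)$ uses Theorem \ref{3.3}, so it needs the self-orthogonality hypothesis; this is included in ``as in Corollary \ref{4.6}''. The inequality of Theorem \ref{4.5}(2) holds for every support $\tau$-tilting $T$. I would also remark on one edge case: if some $S_i$ has ${\rm Ext}^j_A(S_i,T)=0$ for all $j$, its grade is $+\infty$ by convention. This does not conflict with $(2)\Rightarrow(1)$, because in that case the argument of Theorem \ref{3.3} yields ${\rm findim}B^{op}=\infty$, so ${\rm findim}B^{op}$ cannot be $0$. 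With that noted, the chain is just two applications of the sandwich inequality together with the non-negativity of the three invariants.
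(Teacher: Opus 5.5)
Your proposal is correct and follows the paper's own route. The paper simply reads both implications off the chain ${\rm depth}_T\mathcal{C}\leq {\rm findim}B^{op}\leq {\rm dell}_T\mathcal{C}$ of Corollary \ref{4.6}; your explicit check that all three invariants are non-negative fills in the step the paper leaves implicit.
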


\begin{proof}
This is an immediate consequence of Corollary \ref{4.6}.
\end{proof}

In Theorem \ref{4.5}, we show that the finitistic dimension of $B^{op}$ is bounded by the delooping level of $\mathcal{C}$. But the delooping level of  $\mathcal{C}$ is not easy to compute by using Definition \ref{4.1}. It is natural to ask when the finitistic dimension of $B^{op}$ can be bounded by the supremum of the delooping level of $S_i$ ?

\begin{thm}\label{4.8} Let $A, T, S, S_i$ and $B$ be as above. Let $m\geq 1$, $S_i\neq 0$ for $1\leq i\leq m$ and $S_j=0$ for $m+1\leq j\leq n$. If the projective dimension of the top of $\textup{Hom}_A(T,T_j)$ is $l_j$ for $m+1\leq j\leq n$, then $\textup{findim}B^{op}\leq \textup{sup}\{l_j|m+1\leq j\leq n\ \textup{and}\ \textup{dell}_T S_i|1\leq i\leq m\}$.
\end{thm}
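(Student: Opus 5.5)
The plan is to reduce everything to Gélinas' bound ${\rm findim}B^{op}\leq {\rm dell}B$ (Theorem \ref{1.1}) and then bound ${\rm dell}_B D$ for each simple $B$-module $D$. By Theorem \ref{4.2}, the simple $B$-modules split into two families. The first family is $D_i\cong {\rm Hom}_A(T,S_i)$ for $1\leq i\leq m$; these are exactly the simples lying in ${\rm Sub}\mathbb{D}T$. The second family is $D_j={\rm top}\,{\rm Hom}_A(T,T_j)$ for $m+1\leq j\leq n$. So it suffices to show
$$\textup{dell}_B D_i\leq \textup{dell}_T S_i \quad (1\leq i\leq m), \qquad \textup{dell}_B D_j\leq l_j \quad (m+1\leq j\leq n).$$
Taking the supremum then gives ${\rm dell}B\leq \sup\{l_j,\ {\rm dell}_TS_i\}$, and we conclude ${\rm findim}B^{op}\leq {\rm dell}B$ by \cite[Proposition 1.3]{G}.

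For the first family I will apply step (a) of the proof of Theorem \ref{4.5} with $k=1$ and $X=S_i$. That step says ${\rm dell}_B{\rm Hom}_A(T,X)\leq {\rm dell}_T X$ for every $X\in\mathcal{C}$. Its justification is that, by Proposition \ref{4.4}(1), ${\rm Hom}_A(T,-)$ carries minimal ${\rm add}T$-resolutions to minimal projective resolutions. Hence it carries $\Omega_T^d$ to $\Omega_B^d$, and being a functor it preserves stable retractions modulo ${\rm add}T$ versus projectives. Nothing new is needed here.

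For the second family I will use the elementary fact that a module of finite projective dimension has small delooping level. Suppose ${\rm pd}_BD_j=l_j<\infty$. Then $\Omega_B^{l_j}D_j$ is projective, so it is zero in $\underline{\rm mod}B$, and it is trivially a stable retract of $\Omega_B^{l_j+1}N$ for any $N$ (take $N=0$). Therefore ${\rm dell}_BD_j\leq l_j$. If some $l_j=\infty$, the right-hand side of the statement is $\infty$ and there is nothing to prove. I will read the hypothesis as implicitly giving finite $l_j$ whenever the bound is to be meaningful.

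The main obstacle is the bookkeeping in the identification of the simples, namely that the indices $m+1,\dots,n$ correspond exactly to the simples not in ${\rm Sub}\mathbb{D}T$. That is, I need to make sure every simple $B$-module either is some ${\rm Hom}_A(T,S_i)$ or is the top of some ${\rm Hom}_A(T,T_j)$ with $j>m$, so that no simple escapes both bounds. To handle this, I will order the indecomposable summands so that $T_i\to S_i$ is the surjection from the proof of Theorem \ref{4.2}. Then ${\rm Hom}_A(T,S_i)$ is the top of ${\rm Hom}_A(T,T_i)$ for $i\leq m$, and the remaining tops are the remaining simples. This covers all $n$ simples, since $B$ is basic with $n$ indecomposable projectives ${\rm Hom}_A(T,T_j)$.
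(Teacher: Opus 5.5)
Your proof is correct, but it is organized differently from the paper's. The paper never passes through ${\rm dell}\,B$. It takes $M\in{\rm mod}\,B^{op}$ with ${\rm pd}\,M=t>d$ and chooses a simple $D$ with ${\rm Tor}^B_t(D,M)\neq 0$. It notes that $D$ cannot be the top of a ${\rm Hom}_A(T,T_j)$ with $j>m$, since those tops have projective dimension $l_j\leq d<t$. Hence $D\cong{\rm Hom}_A(T,S_i)$ by Theorem \ref{4.2}. It then uses Proposition \ref{4.4}(2) to move ${\rm Tor}^B_t(D,M)$ to ${\rm Tor}^B_{t-d}({\rm Hom}_A(T,\Omega^d_TS_i),M)$, which is a direct summand of ${\rm Tor}^B_{t+1}({\rm Hom}_A(T,N),M)=0$. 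In effect, the paper reruns Gelinas' Tor argument inline.

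You instead prove the intermediate bound ${\rm dell}\,B\leq\sup\{l_j,\ {\rm dell}_TS_i\}$ one simple at a time. You use step (a) of the proof of Theorem \ref{4.5} for the simples in ${\rm Sub}\,\mathbb{D}T$, and ${\rm dell}\leq{\rm pd}$ for the tops of ${\rm Hom}_A(T,T_j)$ with $j>m$. You then quote \cite[Proposition 1.3]{G}.

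Your route is more modular and gives a stronger statement about ${\rm dell}\,B$ itself. It is the analogue of Theorem \ref{4.5}(1) for $k=1$, with the term $2$-${\rm dell}_TN_j+1$ replaced by $l_j$. It also makes explicit the bookkeeping the paper compresses into ``by the assumption, $D$ is in ${\rm Sub}\,\mathbb{D}T$'': every simple is the top of some indecomposable projective ${\rm Hom}_A(T,T_j)$, and for $j\leq m$ that top is ${\rm Hom}_A(T,S_j)$. The paper's route is self-contained and avoids citing Gelinas' theorem, at the cost of duplicating its proof.
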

\begin{proof} Assume that $\textup{sup}\{l_j|m+1\leq j\leq n\ \textup{and}\ \textup{dell}_T S_i|1\leq i\leq m\}=d$, we show $\textup{findim}B^{op} \leq d$.

For any $M \in \textup{mod}B^{op}$ with the projective dimension $t>d$, there exists a simple module $ D\in \textup{mod}B$ such that $\textup{Tor}^B_t (D, M) \neq 0$. By the assumption, $D$ is in $\text{Sub}\mathbb{D}T$. Then by Theorem \ref{4.2}, one gets a brick $S_i\in \mathcal{C}$ such that $\text{Hom}_A(T,S_i)\cong D$. In the following, we show that $t \leq d.$ On the contrary, suppose that $t>d$. Then by Theorem \ref{4.2} and Proposition \ref{4.4}(2), one can get that
\begin{align*}
\text{Tor}_{t}^B (D, M) = \text{Tor}_{t}^B (\textup{Hom}_\Lambda(T,S_i), M) \cong \text{Tor}_{t-d}^B (\Omega^d \textup{Hom}_A(T,S_i), M) \\ \cong \text{Tor}_{t-d}^B (\textup{Hom}_A(T,\Omega_T^d S_i), M),
\end{align*}

 which is a direct summand of

 \begin{align*}
 \text{Tor}_{t-d}^B (\textup{Hom}_A(T,\Omega_T^{d+1} N), M) \cong \text{Tor}_{t-d}^B (\Omega^{d+1}\textup{Hom}_A(T, N), M)\\ \cong \text{Tor}_{t-d+d+1}^B (\textup{Hom}_A(T, N), M)=0
\end{align*} This is a contradiction. The assertion holds.
\end{proof}

Immediately we have the following corollaries.

\begin{cor}\label{4.9} Let $A$ be an algebra and $T=P(1)\oplus P(2)\oplus\cdots\oplus T_j\oplus\cdots\oplus P(n)$ be a $\tau$-tilting module given by a left mutation from $A$. Let $S, S_i$ be as in Definition \ref{4.1}. If the top of $\textup{Hom}_A(T, T_j)$ is of projective dimension $l$, then $\textup{findim}B^{op}\leq \textup{sup}\{l\ \textup{and}\ \textup{dell}_T S_i|1\leq i\leq n, i\neq j\}.$
\end{cor}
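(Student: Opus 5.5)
The plan is to deduce Corollary \ref{4.9} directly from Theorem \ref{4.8}, by checking its hypotheses in the special situation where $T$ is the left mutation of $A$ at $P(j)$. Here $T=\bigoplus_{i\neq j}P(i)\oplus T_j$ is $\tau$-tilting, so $|T|=|A|=n$ and $B={\rm End}_AT$ has $n$ simple modules, the tops of the indecomposable projectives ${\rm Hom}_A(T,P(i))$ for $i\neq j$ and ${\rm Hom}_A(T,T_j)$. We must show two things. First, each simple $B$-module other than the top of ${\rm Hom}_A(T,T_j)$ lies in ${\rm Sub}\mathbb{D}T$. Second, via Theorem \ref{4.2}, these simples correspond to the $n-1$ bricks $S_i$ for $i\neq j$. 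After relabeling, these are exactly the hypotheses of Theorem \ref{4.8} with $m=n-1$ and a single extra index $j$, and the bound there then reads ${\rm findim}B^{op}\leq\sup\{l,\ {\rm dell}_TS_i\mid i\neq j\}$.

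First I would identify the semi-brick. Since ${\rm Fac}T\subsetneq{\rm mod}A$, and $T$ is a left mutation of $A$ (so ${\rm Fac}T\subseteq {\rm Fac}A$), ${\rm Fac}T$ is the torsion class obtained by removing exactly one arrow of the brick-labelled Hasse quiver from ${\rm mod}A$; for $A$ this removed brick is the simple $S(j)$. Hence ${\rm Fac}T$ still contains every simple $S(i)$ with $i\neq j$, being a top of $P(i)\in{\rm add}T$. The natural candidate is $S=\bigoplus_{i\neq j}S_i$ with $S_i=P(i)/\sum_{f\in{\rm rad}{\rm Hom}_A(T,P(i))}{\rm Im}f$, as in Definition \ref{2.11}. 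Next I would check that $S_j$, the label attached to $T_j$, is zero, i.e.\ $T_j$ is generated by radical maps from ${\rm add}T$. Equivalently, by Theorem \ref{4.2}, the top of ${\rm Hom}_A(T,T_j)$ does not lie in ${\rm Sub}\mathbb{D}T$. This is where the left-mutation hypothesis enters: $T_j$ arises as the cokernel of a minimal left ${\rm add}(\bigoplus_{i\neq j}P(i))$-approximation of $P(j)$, so it is a quotient of modules in ${\rm add}(\bigoplus_{i\neq j}P(i))$ through non-isomorphisms. That quotient map is a radical map onto $T_j$, which forces the label of $T_j$ to vanish.

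With $S_j=0$ and $S_i\neq0$ for $i\neq j$ (the latter because the $n-1$ simples ${\rm Hom}_A(T,S_i)$ lie in ${\rm Sub}\mathbb{D}T$, using Theorem \ref{4.2} and counting), Theorem \ref{4.8} applies with $l_j=l$ and yields the stated inequality.

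The main obstacle I expect is the identification of the semi-brick, specifically showing that exactly $n-1$ simple $B$-modules lie in ${\rm Sub}\mathbb{D}T$, i.e.\ that $S$ has precisely $n-1$ nonzero summands. My first attempt would be the counting argument from Corollary \ref{4.3}. Since $T\not\cong A$, not all simples lie in ${\rm Sub}\mathbb{D}T$, so $S$ has at most $n-1$ summands. Conversely, the tops ${\rm Hom}_A(T,S(i))$ for $i\neq j$ are simple and lie in ${\rm Sub}\mathbb{D}T$: each $S(i)$ is a brick in ${\rm Fac}T$, and any nonzero proper $B$-submodule of ${\rm Hom}_A(T,S(i))$ would, via the argument in part (1) of the proof of Theorem \ref{4.2} and Proposition \ref{2.8}, contradict simplicity of $S(i)$ in ${\rm mod}A$. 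This gives the needed $n-1$ distinct simples.
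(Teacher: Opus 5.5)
Your reduction to Theorem 4.8 with $m=n-1$ is the paper's route. Your check that the label of $T_j$ vanishes is correct: the cokernel map $U'\to T_j$ of the approximation is a surjective radical map, a point the paper leaves implicit. The gap is at the step you flag as the main obstacle. You claim that ${\rm Hom}_A(T,S(i))$ is a simple $B$-module for the simple $A$-modules $S(i)$, $i\neq j$, and this is false in general.

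Take the path algebra of $1\to 2$, so $P(1)$ has top $S(1)$ and socle $S(2)$, and $P(2)=S(2)$. Mutate at $j=2$: the approximation $S(2)\hookrightarrow P(1)$ gives $T=P(1)\oplus S(1)$. Then ${\rm Hom}_A(T,S(1))={\rm Hom}_A(P(1),S(1))\oplus{\rm End}_A(S(1))$ is $2$-dimensional. So it is not simple over the basic algebra $B$; it is the indecomposable projective ${\rm Hom}_A(T,T_2)$.

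Your transfer of part (1) of the proof of Theorem 4.2 breaks at Proposition 2.8. That proposition gives ${\rm Ker}\,b\in{\rm Fac}T$ only when the target of $b$ is a summand of the semibrick $S$ with $T(S)={\rm Fac}T$. Here that semibrick is $P(1)$, not $S(1)$. The surjection $P(1)\to S(1)$ has kernel $S(2)\notin{\rm Fac}T$, and ${\rm Hom}_A(T,-)$ does not send it to an epimorphism. So your lower bound of $n-1$ simples in ${\rm Sub}\mathbb{D}T$ is not established, and neither is $S_i\neq 0$ for $i\neq j$. If your parenthetical ``the $n-1$ simples ${\rm Hom}_A(T,S_i)$'' means the labels instead, then invoking Theorem 4.2 there presupposes $S_i\neq 0$, which is circular.

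The missing idea is the one the paper uses: for $i\neq j$, $P(i)\notin{\rm Fac}(T/P(i))$ because $P(i)$ is projective. Hence the label $S_i=P(i)/\sum_{f\in{\rm rad}{\rm Hom}_A(T,P(i))}{\rm Im}f$, which you already wrote down as the candidate, is nonzero.

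To see this directly, note that $P(i)$ is local. So the sum equals $P(i)$ only if some $f\in{\rm rad}{\rm Hom}_A(T,P(i))$ is surjective. Such an $f$ splits, giving $fs=1_{P(i)}$, which contradicts $f$ being radical.

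Once $S_i\neq 0$ is known for all $i\neq j$, the counting through Corollary 4.3 is unnecessary, and Theorem 4.8 applies with $l_j=l$.
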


\begin{proof} Since $P(i)$ is not in $\text{Fac} T/P(i)$, by Definition \ref{2.10}, one gets that there is a left mutation of $T$ at $P(i)$. Then by Definition \ref{2.11}, one gets that the corresponding brick $S_i$ is non-zero. Then the assertion holds.
\end{proof}

\begin{cor}\label{4.10} Let $A$ be an algebra and $T=P(1)\oplus P(2)\oplus\cdots\oplus T_j\oplus\cdots\oplus P(n)$ be an APR-tilting module. Let $ S_i$ be as in Definition \ref{4.1}. If the top of $\textup{Hom}_A(T, T_j)$ is of projective dimension $l$, then $\textup{findim}B^{op}\leq \textup{sup}\{l\ \textup{and}\ \textup{dell}_T S_i|1\leq i\leq n, i\neq j\}.$
\end{cor}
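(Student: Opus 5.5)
The plan is to reduce Corollary \ref{4.10} to Corollary \ref{4.9}, or, if that reduction is not literally available, to apply Theorem \ref{4.8} directly. Recall that an APR-tilting module at a simple projective $S(j)=P(j)$ (with $P(j)$ not injective) has the form $T=\tau^{-1}P(j)\oplus\bigoplus_{i\neq j}P(i)$. Thus $T$ is a tilting module, hence a $\tau$-tilting module, and we may take $T_j=\tau^{-1}P(j)$.

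The first step is to see that $T$ arises from $A$ by a left mutation at $P(j)$, in the sense of Definition \ref{2.10}. The pairs $(A,0)$ and $(T,0)$ share the almost complete summand $\bigoplus_{i\neq j}P(i)$. It remains to check $\textup{Fac}T\subseteq\textup{Fac}A$, which is trivial, so $T$ is a left mutation of $A$ in the paper's convention. (If orientation conventions differ, we simply note that $A$ and $T$ are mutations of each other with a common almost complete summand; only this is used.)

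Given this, the hypotheses of Corollary \ref{4.9} are met verbatim, and its conclusion is exactly the assertion. To be safe, we also check the mechanism behind Corollary \ref{4.9}. For each $i\neq j$ we need the brick $S_i$ of Definition \ref{2.11} to be nonzero, and we need the summand $T_j$ to give $S_j=0$. First, $P(i)\notin\textup{Fac}(T/P(i))$ because $P(i)$ is projective and not a summand of $T/P(i)$. So by Definition \ref{2.10} there is a left mutation at $P(i)$, and the label $S_i=P(i)/\sum_{f\in\textup{rad}}\textup{Im} f$ is nonzero. Second, the remaining indecomposable summand $T_j$ corresponds to the simple $B$-modules not in $\textup{Sub}\mathbb{D}T$. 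By Theorem \ref{4.2} there are exactly $n-1$ simples of the form $\textup{Hom}_A(T,S_i)$, so exactly one index $j$ remains.

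Finally, we invoke Theorem \ref{4.8} with $m=n-1$, after reindexing so that $j$ is the last index, and with $l_j=l$ the projective dimension of the top of $\textup{Hom}_A(T,T_j)$. This yields $\textup{findim}B^{op}\leq\sup\{l,\ \textup{dell}_TS_i \mid i\neq j\}$.

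The main obstacle is the bookkeeping that identifies precisely which simple $B$-module lies outside $\textup{Sub}\mathbb{D}T$, namely the top of $\textup{Hom}_A(T,\tau^{-1}P(j))$. The cleanest way to settle it is to count via the bijection of Theorem \ref{4.2} together with the nonvanishing of the $n-1$ bricks attached to the $P(i)$, rather than to compute the bricks explicitly.
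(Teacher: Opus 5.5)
Your proposal is correct and follows the paper's intended route. The paper gives no separate proof, and the implicit argument is exactly yours: the APR-tilting module is the mutation of $A$ at the simple projective $P(j)$ with $\textup{Fac}T\subsetneq\textup{Fac}A$, hence a left mutation from $A$, so Corollary~\ref{4.9} applies. Here "left mutation" is in the AIR sense, which is what the paper actually uses; the inclusion printed in Definition~\ref{2.10} is reversed, so your parenthetical hedge is warranted.

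In your supplementary check, the count ``exactly $n-1$'' assumes $S_j=0$ rather than proving it. You can justify it as follows: the almost split sequence $0\to P(j)\to P'\to\tau^{-1}P(j)\to 0$ has $P'\in\textup{add}(A/P(j))$, so $T_j\in\textup{Fac}(T/T_j)$ and hence $S_j=0$. Alternatively, Corollary~\ref{4.3} gives it directly, since $T\not\cong A$.
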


We give the following example to illustrate Theorem \ref{4.5} and \ref{4.8}, which also shows why Definition \ref{4.1}(2) is reasonable.

\begin{exm}\label{4.10} Let $A$ be an algebra given by the quiver \[\xymatrix{
1\ar@<2pt>[r]^{\alpha}&2\ar@<2pt>[l]^{\beta}
}\]  with the relations $\alpha\beta\alpha=\beta\alpha\beta=0$. Then
\begin{itemize}
    \item [(1)] $T=\substack{1\\2\\1}\oplus1$ is a $\tau$-tilting module.
    \item [(2)] $B=\textup{End}_A T$ is given by\[\xymatrix{
1\ar@<2pt>[r]^{\alpha}&2\ar@<2pt>[l]^{\beta}
}\] with $\beta\alpha=0$ and $\textup{gldim}B=2$.
    \item [(3)] Since $\mathcal{C}=\textup{Fac}T=\textup{Filt(Fac}\substack{1\\2})$, $\textup{dell}_T\substack{1\\2}=1$ and $2-\textup{dell}\ \textup{radHom}_A(T,T_2)=1$, one gets that
    $\textup{findim}B^{op}\leq\textup{dell}_T \mathcal{C}={\rm sup}\{\textup{dell}_T\substack{1\\2}=1, 2-\textup{dell}\ \textup{radHom}_A(T,T_2)+1\}=2$.
    \item [(4)] The projective dimension of $S(2)(={\rm top}\ \textup{Hom}_A(T, T_2))\in\textup{mod}B$ is $2$ and $\textup{findim}B^{op}\leq \textup{sup}\{2\ \textup{and}\ \textup{dell}_T\substack{1\\2}\}=2$
\end{itemize}
\end{exm}

We end this section with the following example which shows the finite projective dimension of the top in Theorem \ref{4.8} is not necessary.

\begin{exm}\label{4.11} Let $A$ be a preprojective algebra of type $A_3$. Then
\begin{itemize}
    \item [(1)] $T=\substack{1\\2\\3}\oplus\substack{1\\2}\oplus\substack{2\\1}$ is a $\tau$-tilting module with $B=\textup{End}_A T$ given by the quiver \[\xymatrix{3\ar@<2pt>[r]^b & 2\ar@<2pt>[l]^c \ar[r]^a & 1 }\] with the relations $bc=cb=0$. The ${\rm findim}B^{op}=1$.
    \item [(2)] $\mathcal{C}=\textup{Fac}T=T(S)=\textup{Filt}(\textup{Fac}(S_1\oplus S_3))$ with $S_1=\substack{1\\2\\3}$ and $S_3=\substack{\ \\2}$, $\textup{dell}_T S_1=\textup{dell}_T S_3=0$. The projective dimension of the top of $\textup{Hom}_A(T, T_2 )=S(2)$ is $\infty$ with the $2$-$\textup{dell}\Omega^1 S(2)$=$2$-$\textup{dell}S(3)=0$ since $S(3)\cong \Omega^2 S(3)$. Then $\textup{dell}_T\mathcal{C}=1$.
    \item [(3)] $U=\substack{2\\3}\oplus\substack{2\\1\ \  3\\2}\oplus\substack{2\\1}$ is a $\tau$-tilting module with $C=\textup{End}_A U$ given by\[\xymatrix{
1\ar@<2pt>[r]^{a} & 2\ar@<2pt>[l]^{d} \ar@<2pt>[r]^{b} & 3\ar@<2pt>[l]^c
}\] with $ab=cd=0, cb=ad=0, da=bc$. The ${\rm findim}C^{op}=1$.

\item [(4)] $\mathcal{C'}=\textup{Fac}U=T(S')=\textup{Filt}(\textup{Fac}S'_2)$ with $S'_2=\substack{2\\1\ \ 3}$. One gets that $\textup{dell}_U S'_2=0$, the projective dimension of the top of $\textup{Hom}_A(U, U_i )=S(i)$ is $\infty$ for $i=1, 3$, and the  $2$-$\textup{dell}\Omega^1 S(i)$=$2$-$\textup{dell}S(2)=0$ since $S(2)\cong \Omega^2 S(2)$. Then $\textup{dell}_U\mathcal{C'}=1$.
\end{itemize}
\end{exm}

\section{Applications to the finitistic dimension conjecture}

In this section, we apply Theorem \ref{4.5} to the finitistic dimension conjecture. Throughout this section, $T=\oplus_{i=1}^nT_i$ is a support $\tau$-tilting module with $T_i$ indecomposable and $\mathcal{C}=\text{Fac}T=T(S)=\text{Filt(Fac}S)$, where $S\simeq S_1\bigoplus \cdots \bigoplus S_m$ is the left finite semi-brick in \cite[Theorem 2.3(2)]{As}
 and $S_i$ is a brick. Denote by $B=\text{End}_A T$.

For a positive integer $i$, denote by $\Omega^i_T(\mathcal{C})$ the full subcategory of $\mathcal{C}$ formed by direct summands of $\Omega_T^n(X)\oplus T'$ for some object $X$ and $T'\in{\rm add}T$ and denote by $L_i$ the set of isomorphism classes of indecomposable modules in $\Omega^i_T(\mathcal{C})$. We have the following lemma.

\begin{lem}\label{5.1} Let $A$ be an algebra, $T\in {\rm mod}A$ a support $\tau$-tilting module and $\mathcal{C}=\textup{Fac}T$. If $L_1$ is a finite set, then global $2$-delooping level of $\mathcal{C}$ relative to $T$ is finite, that is, global-$2$-$\textup{dell}_T\mathcal{C}<\infty$.
\end{lem}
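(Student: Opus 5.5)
This is the relative version of the standard argument that finitely many indecomposable first syzygies force finite $2$-delooping level. The whole argument stays inside the stable category $\underline{\mathcal{C}}_T$, where $\Omega_T=\Omega^1_T$ is an additive functor by Proposition \ref{2.6}.

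\emph{Step 1: $\Omega_T$ permutes a finite set.} Every $\Omega^1_T$-syzygy is a direct sum of indecomposables from $L_1$. Hence $\Omega_T$ sends $L_1$ into ${\rm add}(L_1\cup {\rm add}T)$. Modulo ${\rm add}T$, consider the finite set $L_1'$ of non-projective-in-$\mathcal{C}$ objects of $L_1$, i.e. those not in ${\rm add}T$. For $X\in L_1'$, the object $\Omega_T X$ is stably isomorphic to a direct sum of objects of $L_1'$. Define a directed graph on $L_1'$ with an arrow $X\to Y$ when $Y$ is a summand of $\Omega_TX$ in $\underline{\mathcal{C}}_T$. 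Following any path long enough, one must enter a cycle or reach an $X$ with $\Omega_TX=0$ in $\underline{\mathcal{C}}_T$, i.e. $\Omega_TX\in{\rm add}T$. Since $L_1'$ is finite, there is a bound $p=|L_1'|$ on how long one can wander before this happens.

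\emph{Step 2: every $M\in\mathcal{C}$ deloops after at most $p+1$ steps.} Take $M\in\mathcal{C}$ and write $\Omega_TM\simeq\bigoplus X_r$ with $X_r\in L_1'$ in $\underline{\mathcal{C}}_T$. It suffices to treat each $X_r$, since stable retracts are compatible with finite direct sums: if $\Omega^{d}_TX_r$ is a retract of $\Omega^{d+2}_TN_r$, then the sum is a retract of $\Omega_T^{d+2}(\bigoplus N_r)$, enlarging $d$ to the maximum via $\Omega_T$.

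For $X\in L_1'$, I would classify $X$ by the graph.
\begin{enumerate}
\item If some iterate $\Omega^s_TX$ is zero in $\underline{\mathcal{C}}_T$ with $s\le p$, then $\Omega_T^sX$ is trivially a retract of anything. This gives $d\le p$.
\item Otherwise consider the indecomposable summands occurring in $\Omega_T^{s}X$ for $s\geq0$. The pigeonhole principle on the finite graph shows that each indecomposable summand $Z$ of $\Omega_T^{p}X$ lies on or below a cycle, so $Z$ is a summand of $\Omega_T^{c}Z'$ for some $Z'$ and some $c\ge 1$.
\end{enumerate}

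\emph{Step 3: make the shift at least $2$.} The cleaner route is to use the inverse direction directly. Each $Z\in L_1'$ that is a summand of $\Omega_T^{p}X$ is, by the graph argument, a summand of $\Omega^{q}_TW$ for arbitrarily large $q$, with $W\in L_1'\subseteq\mathcal{C}$, because cycles can be traversed repeatedly. In particular $q\ge2$ is achievable, and then $q=2$ can be obtained by replacing $W$ by $\Omega_T^{q-2}W\in\mathcal{C}$. So $\Omega^p_TX$ is a stable retract of $\Omega^{2}_T(\bigoplus W)$, hence of $\Omega_T^{p+2}$ of something after enlarging $d$. Combined with Step 2, this gives $2$-${\rm dell}_TM\le p+1$ uniformly, which proves global-$2$-${\rm dell}_T\mathcal{C}<\infty$.

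I expect the main obstacle to be the bookkeeping in Step 2. One must ensure that \emph{every} summand reached after $p$ steps really lies on a cycle, or at least below one in the sense of being reachable from a cycle. The argument needs a summand $Z$ of $\Omega^p_TX$ to be a summand of $\Omega_T^{q}$ of some object, and this holds because $Z$ has a path of length $p$ into it from $X$. That path revisits a vertex, which yields arbitrarily long paths ending at $Z$. I would check this pigeonhole step carefully, together with the fact that $\Omega_T$ preserves stable retracts, which holds since $\Omega_T$ is additive on $\underline{\mathcal{C}}_T$.
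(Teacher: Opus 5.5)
Your argument is correct in substance. It rests on the same finiteness phenomenon as the paper's proof, packaged differently. The paper builds no graph. It notes that $L_1\supseteq L_2\supseteq\cdots$ is a descending chain of finite sets, so $L_n=L_{n+1}$ for some $n$. Applying the additive functor $\Omega_T$ on $\underline{\mathcal{C}}_T$ propagates this to $L_{n+1}=L_{n+2}$, hence $L_n=L_{n+2}$. Then every indecomposable summand of $\Omega^n_TM$ is immediately a summand of some $\Omega^{n+2}_TN$. Modulo ${\rm add}T$, the stable set $L_n$ is exactly the set of vertices of your graph that are endpoints of arbitrarily long paths, i.e.\ those lying downstream of a cycle. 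So your pigeonhole on paths is an explicit unwinding of the chain stabilization. The paper's version is shorter, and because it works with $\Omega^n_TM$ directly it never has to convert between shifts. Your version makes the bound $p+1$, with $p=|L_1'|$, visible, but the chain argument gives the same bound, since $L_1'\supseteq L_2'\supseteq\cdots$ can drop at most $p$ times.

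The conversion between shifts is where your write-up slips, twice.
\begin{enumerate}
\item In Step 3, being a retract of $\Omega^2_T(\bigoplus W)$ is not what $2$-${\rm dell}_TX\le p$ asks for. No ``enlarging $d$'' turns it into a retract of a $(p+2)$-nd syzygy.
\item In Step 2, from $\Omega^d_TX_r$ being a retract of $\Omega^{d+2}_TN_r$ you only get $\Omega^{d+1}_TM\simeq\bigoplus_r\Omega^d_TX_r$ as a retract of $\Omega^{d+2}_T(\bigoplus N_r)$. That is a shift of $1$, so it bounds $2$-${\rm dell}_T(\Omega_TM)$, not $2$-${\rm dell}_TM$.
\end{enumerate}
Both are repaired by the fact you already proved, that $q$ can be taken arbitrarily large. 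Each indecomposable summand $Z$ of $\Omega^{p+1}_TM\simeq\bigoplus_r\Omega^p_TX_r$ ends a length-$p$ path from some $X_r$, so it lies downstream of a cycle. The last $p+3$ arrows of a long enough path into $Z$ then exhibit $Z$ as a stable summand of $\Omega^{p+3}_TW$ with $W\in L_1'$. Summing over the summands gives $2$-${\rm dell}_TM\le p+1$ uniformly, as you claimed.
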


\begin{proof} It is obvious that $L_1\supseteq L_2\supseteq\cdots\supseteq L_n\supseteq\cdots$. Since there are finite number of elements in $L_1$, one gets there is an $n\geq 1$ such that $L_n=L_{n+1}$ and hence $L_{n+1}=L_{n+2}$ by the definition of $L_i$. So one gets that $L_n=L_{n+2}$. Then for any $M\in \mathcal{C}$, one gets an $N\in \mathcal{C}$ such that $\Omega^n_T(M)$ is a direct summand of $\Omega^{n+2}_T(N)$, that is, the $2$-delooping level of $M$ relative to $T$ is no more than $n$. The assertion holds.
\end{proof}

To show the main theorem in this section, we also need the following lemma.

\begin{lem}\label{5.2} Let $A$ be an algebra, $T\in {\rm mod}A$ a support $\tau$-tilting module and $\mathcal{C}=\textup{Fac}T$. If $L_1$ is a finite set, then the delooping level of $\mathcal{C}$ relative to $T$ is finite, that is, ${\rm dell}_T\mathcal{C}<\infty$.
\end{lem}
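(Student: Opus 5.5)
The plan is to reduce Lemma \ref{5.2} to the same stabilisation argument used in Lemma \ref{5.1}, applied to the two pieces of Definition \ref{4.1}(2). By that definition,
$${\rm dell}_T\mathcal{C}=\sup\{\,1\text{-}{\rm dell}_T S_i\ (1\leq i\leq m),\ \ 2\text{-}{\rm dell}_T N_j+1\ (m+1\leq j\leq n)\,\}.$$
This is a supremum over finitely many terms. So it suffices to bound each $1$-${\rm dell}_T S_i$ and each $2$-${\rm dell}_T N_j$ by a single integer.

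First I would bound the $N_j$-terms. Each $N_j$ lies in $\mathcal{C}$, so Lemma \ref{5.1} gives $2$-${\rm dell}_T N_j\leq$ global-$2$-${\rm dell}_T\mathcal{C}<\infty$. These terms are therefore finite.

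Next I would handle the $S_i$-terms by rerunning the stabilisation argument of Lemma \ref{5.1} with step $1$.
\begin{itemize}
\item The chain $L_1\supseteq L_2\supseteq\cdots$ is decreasing. This holds because $\Omega^{i+1}_T X=\Omega^i_T(\Omega^1_T X)$ with $\Omega^1_T X\in\mathcal{C}$ by Proposition \ref{2.6}, and $\Omega_T$ commutes with finite direct sums up to summands in ${\rm add}T$.
\item Since $L_1$ is finite, the chain stabilises: there is an $n$ with $L_n=L_{n+1}$.
\item For $M\in\mathcal{C}$, each indecomposable summand of $\Omega^n_T M$ not in ${\rm add}T$ lies in $L_n=L_{n+1}$. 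So it is a summand of $\Omega^{n+1}_T X_\alpha\oplus T'$ for some $X_\alpha\in\mathcal{C}$.
\item Summing over the finitely many indecomposable summands and setting $N=\bigoplus_\alpha X_\alpha$, we get that $\Omega^n_T M$ is a direct summand of $\Omega^{n+1}_T N\oplus T''$. Hence $\Omega^n_T M$ is a stable retract of $\Omega^{n+1}_T N$ in $\underline{\mathcal{C}}_T$, so $1$-${\rm dell}_T M\leq n$.
\end{itemize}
In particular $1$-${\rm dell}_T S_i\leq n$ for all $i$. Combined with the bound on the $N_j$-terms, this gives ${\rm dell}_T\mathcal{C}<\infty$.

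The main obstacle is the third and fourth points: making precise that membership in $L_{n+1}$ produces a single object $N\in\mathcal{C}$ realising the retract. This requires $\Omega^{n+1}_T$ to be additive on $\underline{\mathcal{C}}_T$, so that summands coming from different $X_\alpha$ can be combined. It also requires that summands in ${\rm add}T$ be ignored stably. Both follow from the additivity of $\Omega^1_T$ on $\underline{\mathcal{C}}_T$ noted after Proposition \ref{2.6}, together with the uniqueness of minimal ${\rm add}T$-approximations up to summands in ${\rm add}T$. I would make this explicit rather than rely on the brief phrasing in Lemma \ref{5.1}.

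A possible shortcut is to observe that a stable retract of $\Omega^{d+2}_T N=\Omega^{d+1}_T(\Omega^1_T N)$ is also a stable retract of $\Omega^{d+1}_T N'$ with $N'=\Omega^1_T N\in\mathcal{C}$. Then $1$-${\rm dell}_T\leq 2$-${\rm dell}_T$ pointwise, and Lemma \ref{5.1} alone bounds the $S_i$-terms. I would present this shortcut as the main argument and keep the direct stabilisation above as a check.
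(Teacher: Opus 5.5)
Your proposal is correct, and your main argument is the paper's own. The paper shows $1\text{-}{\rm dell}_T M\leq 2\text{-}{\rm dell}_T M$ pointwise by replacing $N$ with $\Omega^1_T N\in\mathcal{C}$, and then concludes from Lemma \ref{5.1} and Definition \ref{4.1}. You additionally make explicit two points the paper leaves implicit: the bound on the $N_j$-terms, and the passage from direct summands to stable retracts.
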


\begin{proof} It suffices to show that for a module $M$ in $\mathcal{C}$ the delooping level of $M$ relative to $T$ is no more than the $2$-delooping level of $M$ relative to $T$. By Lemma 5.1, we can assume that $2$-dell$_T M=n$, that is, $\Omega^n_T(M)$ is a direct summand of $\Omega^{n+2}_T(N)$ for some $N\in\mathcal{C}$. Then one gets that $\Omega^n_T(M)$ is a direct summand of $\Omega_T^{n+1}(\Omega^1_T N)$. So one gets the delooping level of $S_i$ is no more than $n$. By Definition \ref{4.1}, the assertion holds.
\end{proof}

  Now we are in a position to state the main result in this section.
  \begin{thm}\label{5.3} Let $A$ be an algebra, $T\in {\rm mod}A$ a support $\tau$-tilting module and $B={\rm End}_A T$. If ${\rm Fac}T\cap {\rm Sub}T$ is of finite representation type, then ${\rm findim}B^{op}<\infty$.
  \end{thm}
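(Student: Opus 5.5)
The plan is to reduce Theorem \ref{5.3} to Lemma \ref{5.2} and Theorem \ref{4.5}(2). Theorem \ref{4.5}(2) gives ${\rm findim}B^{op}\leq {\rm dell}_T\mathcal{C}$. Lemma \ref{5.2} says ${\rm dell}_T\mathcal{C}<\infty$ as soon as the set $L_1$ of indecomposable objects of $\Omega^1_T(\mathcal{C})$ is finite. So it suffices to show that $L_1$ is contained, up to isomorphism, in the set of indecomposables of ${\rm Fac}T\cap{\rm Sub}T$. This set is finite by hypothesis.

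\emph{Step 1: closure properties of $\Omega^1_T(\mathcal{C})$.} An object of $\Omega^1_T(\mathcal{C})$ is a direct summand of $\Omega^1_T X\oplus T'$ with $X\in\mathcal{C}$ and $T'\in{\rm add}T$. Since ${\rm Fac}T$ and ${\rm Sub}T$ are both closed under direct summands and finite direct sums, and ${\rm add}T\subseteq {\rm Fac}T\cap{\rm Sub}T$, it is enough to show $\Omega^1_T X\in {\rm Fac}T\cap {\rm Sub}T$ for each $X\in\mathcal{C}$.

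\emph{Step 2: $\Omega^1_TX$ lies in both subcategories.} By Proposition \ref{2.6}, the minimal ${\rm add}T$-approximation gives an exact sequence $0\to \Omega^1_T X\to T_0\to X\to 0$ with $T_0\in{\rm add}T$ and $\Omega^1_TX\in{\rm Fac}T$. As a submodule of $T_0\in{\rm add}T$, the module $\Omega^1_TX$ is cogenerated by $T$, so it lies in ${\rm Sub}T$. Hence every indecomposable in $L_1$ lies in ${\rm Fac}T\cap{\rm Sub}T$. Because this category has only finitely many indecomposables up to isomorphism, $L_1$ is finite.

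\emph{Step 3: conclude.} Lemma \ref{5.1} and Lemma \ref{5.2} now give ${\rm dell}_T\mathcal{C}<\infty$, and Theorem \ref{4.5}(2) gives ${\rm findim}B^{op}\leq{\rm dell}_T\mathcal{C}<\infty$.

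The main obstacle is the interpretation of ${\rm dell}_T\mathcal{C}$ in Definition \ref{4.1}(2). It involves not only the relative delooping levels of the bricks $S_i$ but also the terms $(2\text{-}{\rm dell}_T N_j)+1$ for the modules $N_j$ with ${\rm Hom}_A(T,N_j)\cong{\rm rad}\,{\rm Hom}_A(T,T_j)$. Lemma \ref{5.1} supplies a uniform bound $n$ on the global $2$-delooping level, which covers the $N_j$-terms, giving the bound $n+1$. The proof of Lemma \ref{5.2} covers the $S_i$-terms, since $2\text{-}{\rm dell}_T\leq n$ implies $1\text{-}{\rm dell}_T\leq n$. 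I would check explicitly that the stabilization argument of Lemma \ref{5.1} applies to every object of $\mathcal{C}$, including the $N_j$. Then ${\rm dell}_T\mathcal{C}\leq n+1$, which finishes the argument.
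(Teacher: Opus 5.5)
Your proposal is correct and follows the paper's proof. The paper also asserts ${\rm Fac}T\cap{\rm Sub}T={\rm add}L_1$, deduces that $L_1$ is finite, and concludes via Lemma \ref{5.1}, Lemma \ref{5.2} and Theorem \ref{4.5}(2); you prove only the inclusion of $L_1$ into ${\rm Fac}T\cap{\rm Sub}T$, which is the half actually needed. Your explicit handling of the $N_j$-terms of Definition \ref{4.1}(2), through the global bound from Lemma \ref{5.1}, spells out what the paper's proof of Lemma \ref{5.2} leaves to the phrase ``By Definition \ref{4.1}''.
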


  \begin{proof} It is easy to show that ${\rm Fac}T\cap {\rm Sub}T={\rm add}L_1$. Since ${\rm Fac}T\cap {\rm Sub}T$ is of finite representation type, one gets that $L_1$ is a finite set. Then the assertion follows from Theorem \ref{4.5}, Lemma \ref{5.1} and Lemma \ref{5.2}.
\end{proof}

We have the following straight corollaries.

\begin{cor}\label{5.4} Let $A$ be an algebra, $T\in {\rm mod}A$ a support $\tau$-tilting module and $B={\rm End}_A T$. If ${\rm Fac}T$ or ${\rm Sub}T$ is of finite representation type, then ${\rm findim}B^{op}<\infty$.
\end{cor}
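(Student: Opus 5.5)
The plan is to reduce the statement to Theorem \ref{5.3}. The key observation is that ${\rm Fac}T\cap{\rm Sub}T$ is a full subcategory of both ${\rm Fac}T$ and ${\rm Sub}T$. So it suffices to show that, if either of these two larger subcategories has only finitely many indecomposable modules up to isomorphism, then so does the intersection.

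First, suppose ${\rm Fac}T$ is of finite representation type. Every indecomposable object of ${\rm Fac}T\cap{\rm Sub}T$ is in particular an indecomposable object of ${\rm Fac}T$. Hence the set of isomorphism classes of indecomposables in the intersection injects into the corresponding finite set for ${\rm Fac}T$. Therefore ${\rm Fac}T\cap{\rm Sub}T$ is of finite representation type. The case where ${\rm Sub}T$ is of finite representation type is identical, using the inclusion ${\rm Fac}T\cap{\rm Sub}T\subseteq{\rm Sub}T$.

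In either case the hypothesis of Theorem \ref{5.3} holds, so ${\rm findim}B^{op}<\infty$. Equivalently, one can phrase this through the chain used in the proof of Theorem \ref{5.3}: $L_1$ is a finite set, hence global-$2$-${\rm dell}_T\mathcal{C}<\infty$ by Lemma \ref{5.1}, hence ${\rm dell}_T\mathcal{C}<\infty$ by Lemma \ref{5.2}, and finally Theorem \ref{4.5}(2) gives the bound on ${\rm findim}B^{op}$.

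There is essentially no obstacle here. The only point to state carefully is that ``finite representation type'' for a subcategory means finitely many indecomposables up to isomorphism, a property inherited by full subcategories closed under direct summands. Both ${\rm Fac}T$ and ${\rm Sub}T$ are closed under direct summands, so their intersection is as well, and the inheritance applies.
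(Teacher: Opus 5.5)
Your proposal is correct and is exactly the argument the paper intends. The paper lists this as a straight corollary of Theorem \ref{5.3} with no written proof, and the only content is your observation that ${\rm Fac}T\cap{\rm Sub}T$ lies inside both ${\rm Fac}T$ and ${\rm Sub}T$, so finite representation type of either passes to the intersection (your remark about closure under direct summands is harmless but not needed, since an indecomposable module in the intersection is already an indecomposable module in ${\rm Fac}T$, resp. ${\rm Sub}T$).
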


\begin{cor}\label{5.5} Let $A$ be an algebra of finite representation type, $T\in {\rm mod}A$ a support $\tau$-tilting module and $B={\rm End}_A T$. Then ${\rm findim}B^{op}<\infty$.
\end{cor}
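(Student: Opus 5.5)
The plan is to reduce Corollary \ref{5.5} directly to Theorem \ref{5.3}, or more conveniently to Corollary \ref{5.4}. Suppose $A$ is of finite representation type. Then ${\rm mod}A$ has only finitely many indecomposable modules up to isomorphism. Every full additive subcategory of ${\rm mod}A$ that is closed under direct summands therefore has only finitely many indecomposables as well. This applies in particular to ${\rm Fac}T$, which is closed under direct summands because it is a torsion class. It also applies to the intersection ${\rm Fac}T\cap{\rm Sub}T$, since ${\rm Sub}T$ is closed under direct summands too. So ${\rm Fac}T\cap{\rm Sub}T$ is of finite representation type.

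Applying Theorem \ref{5.3} then gives ${\rm findim}B^{op}<\infty$.

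It is worth spelling out the chain inside Theorem \ref{5.3}, because that is where the real content sits. The relative syzygies of Proposition \ref{2.6} lie in ${\rm Fac}T$. They are also submodules of modules in ${\rm add}T$, so they lie in ${\rm Sub}T$. Hence the set $L_1$ of indecomposables in $\Omega^1_T(\mathcal{C})$ is contained in the indecomposables of ${\rm Fac}T\cap{\rm Sub}T$, and so is finite. Lemma \ref{5.2} then gives ${\rm dell}_T\mathcal{C}<\infty$. Finally, Theorem \ref{4.5}(2) bounds ${\rm findim}B^{op}$ by ${\rm dell}_T\mathcal{C}$.

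There is essentially no obstacle at the level of this corollary. The only points that need checking are that finiteness of ${\rm ind}A$ passes to the subcategory, which is immediate, and that no hypothesis such as self-orthogonality of $T$ is needed. It is not, since Theorem \ref{5.3} only uses the upper bound from Theorem \ref{4.5}, which holds for any support $\tau$-tilting module.
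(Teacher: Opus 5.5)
Your proposal is correct and is essentially the paper's argument: Corollary \ref{5.5} is stated there as a straight consequence of Theorem \ref{5.3} (alongside Corollary \ref{5.4}), because representation-finiteness of $A$ forces ${\rm Fac}T\cap{\rm Sub}T$ to have only finitely many indecomposables. Your unpacking of the chain inside Theorem \ref{5.3} also agrees with the paper: $L_1$ sits inside the indecomposables of ${\rm Fac}T\cap{\rm Sub}T$, then Lemmas \ref{5.1} and \ref{5.2} apply, then Theorem \ref{4.5}(2). Your remark that no self-orthogonality of $T$ is needed is likewise consistent with it.
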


In the rest of this section we give another hint on the finitistic dimension conjecture via support $\tau$-tilting modules.

\begin{prop}\label{5.6} Let $A$ be an algebra, $T\in {\rm mod}A$ a support $\tau$-tilting module and $B={\rm End}_A T$. Let $\bar{A}=A/{\rm ann}T$.
Then ${\rm findim}B^{op}<\infty$ if and only if so does $\bar{A}^{op}$.
\end{prop}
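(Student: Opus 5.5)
The key observation is that $T$ is a tilting module over $\bar{A}=A/{\rm ann}T$ (Proposition \ref{2.12}) and that $B={\rm End}_A T={\rm End}_{\bar{A}}T$. The statement therefore reduces to the classical fact that finiteness of the finitistic dimension is invariant under classical tilting, applied to the opposite algebras. Since ${\rm Fac}T$, ${\rm Sub}T$ and all ${\rm Hom}$-spaces between $\bar{A}$-modules are the same whether computed over $A$ or $\bar{A}$, nothing is lost in passing to $\bar{A}$. From here on I work entirely with the tilting module $T_{\bar{A}}$ and $B={\rm End}_{\bar{A}}T$.

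I would pass to the left side as follows. The bimodule ${}_BT_{\bar{A}}$ is also a tilting module as a left $B$-module, i.e.\ as a right $B^{op}$-module, with ${\rm End}_{B}({}_BT)^{op}\cong\bar{A}$ (the Brenner--Butler double centralizer property; see \cite{ASS}). The statement is symmetric in the roles of $\bar{A}^{op}$ and $B^{op}$. Hence it suffices to prove the following claim: if $U$ is a tilting module over an algebra $\Lambda$ with $\Gamma={\rm End}_\Lambda U$, then ${\rm findim}\,\Lambda<\infty$ implies ${\rm findim}\,\Gamma<\infty$, and conversely.

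For the claim I would prove the bound ${\rm findim}\,\Gamma\leq{\rm findim}\,\Lambda+1$, and symmetrically ${\rm findim}\,\Lambda\leq{\rm findim}\,\Gamma+1$. Take $Y\in{\rm mod}\Gamma$ with ${\rm pd}_\Gamma Y<\infty$. Using the torsion pair $(\mathcal{X}(U),\mathcal{Y}(U))$ of Proposition \ref{2.3}, I replace $Y$ by its first syzygy $\Omega Y$. Every submodule of a module in ${\rm add}\,D U$-cogenerated form is torsion-free, so $\Omega Y$, being a submodule of a projective, lies in $\mathcal{Y}(U)$. Also ${\rm pd}\,\Omega Y\geq{\rm pd}\,Y-1$. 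By Theorem \ref{2.4}(1), $\Omega Y\cong{\rm Hom}_\Lambda(U,M)$ with $M\in\mathcal{T}(U)={\rm Fac}\,U$. Proposition \ref{4.4}(1) shows that a minimal projective resolution of $\Omega Y$ corresponds to a minimal ${\rm add}\,U$-resolution of $M$ inside ${\rm Fac}\,U$. Since ${\rm pd}\,\Omega Y<\infty$, this ${\rm add}\,U$-resolution is finite, of length $r={\rm pd}\,\Omega Y$. Because ${\rm pd}_\Lambda U\leq1$, a finite ${\rm add}\,U$-resolution of length $r$ gives ${\rm pd}_\Lambda M\leq r+1<\infty$.

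The main obstacle is the reverse comparison, namely bounding $r$ by ${\rm pd}_\Lambda M$ up to a constant. Here I would use that $U$ is self-orthogonal and that ${\rm Ext}^i_\Lambda(U,-)$ vanishes on ${\rm Fac}\,U$ for $i\geq1$. Together these give ${\rm Ext}^{j}_\Gamma({\rm Hom}(U,M),{\rm Hom}(U,N))\cong{\rm Ext}^j_\Lambda(M,N)$ for $M,N\in{\rm Fac}\,U$. Then ${\rm Ext}^r_\Gamma(\Omega Y,\Gamma)\neq0$ translates to ${\rm Ext}^r_\Lambda(M,U)\neq0$, which forces $r\leq{\rm pd}_\Lambda M\leq{\rm findim}\,\Lambda$. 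Combining, ${\rm pd}\,Y\leq{\rm findim}\,\Lambda+1$. The converse is the same argument applied to ${}_BT$, or equivalently uses $\mathcal{F}(U)$ and ${\rm Ext}^1$ via Theorem \ref{2.4}(2) to handle $\Lambda$-modules outside ${\rm Fac}\,U$ by a one-step shift. Applying the claim with $\Lambda=\bar{A}^{op}$, $\Gamma=B^{op}$ gives the proposition. This is the step where I expect the most care to be needed: checking the Ext-isomorphism precisely and keeping track of the sides (left versus right, $op$) correctly.
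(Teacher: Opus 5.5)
Your argument for ${\rm findim}\,\Gamma\le{\rm findim}\,\Lambda+1$, where $U_\Lambda$ is tilting and $\Gamma={\rm End}_\Lambda U$, is correct. The step to $\Omega Y\in\mathcal{Y}(U)$ works, the transport to an ${\rm add}\,U$-resolution in ${\rm Fac}\,U$ works, and so does the isomorphism ${\rm Ext}^j_\Gamma({\rm Hom}(U,M),{\rm Hom}(U,N))\cong{\rm Ext}^j_\Lambda(M,N)$. The converse, however, is not proved.

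Applying your argument to the two sides of the bimodule ${}_BT_{\bar A}$ gives exactly two inequalities:
\begin{itemize}
\item ${\rm findim}\,B\le{\rm findim}\,\bar A+1$, from $T_{\bar A}$;
\item ${\rm findim}\,\bar A^{op}\le{\rm findim}\,B^{op}+1$, from ${}_BT=T_{B^{op}}$.
\end{itemize}
The endomorphism ring of $T_{B^{op}}$ is $\bar A^{op}$. So the correct assignment is $\Lambda=B^{op}$, $\Gamma=\bar A^{op}$, not the reverse as you wrote.

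The implication ${\rm findim}\,\bar A^{op}<\infty\Rightarrow{\rm findim}\,B^{op}<\infty$ follows from neither inequality. The finitistic dimension is one-sided, so combining them would need a left--right symmetry you do not have. Nor does the bimodule supply a tilting $\bar A^{op}$-module with endomorphism ring $B^{op}$ to which your argument could be applied.

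Your fallback (``use $\mathcal{F}(U)$ and ${\rm Ext}^1$ by a one-step shift'') is not an argument as stated. Its obvious reading, splitting $X$ via $0\to tX\to X\to X/tX\to 0$, fails: $tX$ and $X/tX$ need not have finite projective dimension.

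The missing inequality ${\rm findim}\,\Lambda\le{\rm findim}\,\Gamma+1$ needs two extra ingredients.
\begin{itemize}
\item Move into ${\rm Fac}\,U$. For $X\in{\rm mod}\,\Lambda$ with ${\rm pd}\,X<\infty$, take the universal extension $0\to X\to E\to U^d\to 0$ with $E\in U^{\perp_1}={\rm Fac}\,U$; this exists since ${\rm pd}\,U\le 1$. Then ${\rm pd}\,X\le{\rm pd}\,E<\infty$.
\item Show the ${\rm add}\,U$-resolution terminates. Suppose $E\in{\rm Fac}\,U$ has ${\rm pd}\,E=q$, and let $K_q$ be the $q$-th syzygy of its ${\rm add}\,U$-resolution inside ${\rm Fac}\,U$. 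Dimension shifting gives ${\rm Ext}^1_\Lambda(K_q,N)\cong{\rm Ext}^{q+1}_\Lambda(E,N)=0$ for all $N\in{\rm Fac}\,U$. Hence $0\to K_{q+1}\to U_q\to K_q\to 0$ splits and $K_q\in{\rm add}\,U$, so ${\rm pd}_\Gamma{\rm Hom}_\Lambda(U,E)\le q<\infty$.
\end{itemize}
Your own estimate then gives ${\rm pd}\,X\le{\rm pd}\,E\le{\rm pd}_\Gamma{\rm Hom}_\Lambda(U,E)+1\le{\rm findim}\,\Gamma+1$.

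With this added and the sides kept straight, your route yields the explicit bound $|{\rm findim}\,B^{op}-{\rm findim}\,\bar A^{op}|\le 1$. The paper instead gets only finiteness, by taking the derived equivalence of $\bar A^{op}$ and $B^{op}$ from Rickard's theorem and citing Pan--Xi's derived invariance.
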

\begin{proof} By Proposition \ref{2.12}, $T$ is a tilting module in ${\rm mod}\bar{A}$. Since ${\rm Hom}_A(T, T)={\rm Hom}_{\bar{A}}(T, T)=B$, one gets that $T$ is also a tilting module in ${\rm mod}B^{op}$. Therefore one gets that $\bar{A}^{op}$ and $B^{op}$ are derived equivalent by \cite[Theorem 1.1]{R}. Then by \cite[Theorem 1.1]{PX}, the assertion holds.
\end{proof}

Recall that an algebra $A$ is called minimal representation infinite if it is of infinite representation type and every factor algebra of $A$ is of finite representation type. We have the following corollary.

\begin{cor}\label{5.7} Let $A$ be a minimal representation infinite algebra, $T\in {\rm mod}A$ a support $\tau$-tilting but not a tilting module but and $B={\rm End}_A T$. Then ${\rm findim}B^{op}<\infty$.
\end{cor}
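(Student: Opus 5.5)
The plan is to reduce to Proposition \ref{5.6} and then to the finite-type criterion of Theorem \ref{5.3}/Corollary \ref{5.4}, applied over the factor algebra $\bar{A}=A/{\rm ann}T$ rather than over $A$.

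\textbf{Step 1: the factor algebra is proper.} By Proposition \ref{2.12}, $T$ is a tilting $\bar{A}$-module. We claim $\bar{A}\neq A$. If ${\rm ann}T=0$, then $T$ is a faithful support $\tau$-tilting module. By the standard fact from \cite{AIR} that faithful $\tau$-rigid modules are partial tilting (so ${\rm pd}_A T\leq 1$ and ${\rm Ext}^1_A(T,T)=0$), together with $|T|=|\bar{A}|=|A|$, this would make $T$ a tilting $A$-module. That contradicts the hypothesis, so ${\rm ann}T\neq 0$ and $\bar{A}$ is a proper factor algebra of $A$.

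\textbf{Step 2: finite type over $\bar{A}$.} Since $A$ is minimal representation infinite, every proper factor algebra is of finite representation type, so $\bar{A}$ is representation-finite.

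\textbf{Step 3: conclude.} This step has two parallel routes.

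One route is to apply Corollary \ref{5.5} to the algebra $\bar{A}$ and its support $\tau$-tilting (indeed tilting) module $T$. Since ${\rm End}_{\bar{A}}T={\rm End}_A T=B$, this gives ${\rm findim}B^{op}<\infty$ directly.

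The other route uses ${\rm Fac}T\subseteq{\rm mod}\bar{A}$. Because $\bar{A}$ has finite type, ${\rm Fac}T$ has only finitely many indecomposables, so Corollary \ref{5.4} applies over $A$ itself.

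Either way, Proposition \ref{5.6} can serve as a cross-check: it shows the conclusion is equivalent to ${\rm findim}\bar{A}^{op}<\infty$. That holds because $\bar{A}^{op}$ is representation-finite, and representation-finite algebras have finite finitistic dimension.

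\textbf{Main obstacle.} The only nontrivial step is Step 1, showing that a faithful support $\tau$-tilting module is tilting. I would cite \cite[Proposition 2.2]{AIR}: if $M$ is $\tau$-rigid and faithful, then ${\rm pd}_A M\leq1$ and ${\rm Ext}^1_A(M,M)=0$ follows from ${\rm Hom}_A(M,\tau M)=0$ via the Auslander–Reiten formula. If one prefers to stay within the paper, Proposition \ref{2.12} already gives this when ${\rm ann}T=0$, since then $\bar{A}=A$ and $T$ is tilting over $A$ itself.
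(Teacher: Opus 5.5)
Your proposal is correct, but your main line differs from the paper's proof. The paper argues only as in your ``cross-check''. Since $T$ is tilting over $\bar A$, the algebras $\bar A^{op}$ and $B^{op}$ are derived equivalent. Proposition~\ref{5.6} (Rickard together with Pan--Xi's derived invariance of finiteness of ${\rm findim}$) then transfers ${\rm findim}\,\bar A^{op}<\infty$ from the representation-finite $\bar A^{op}$ to $B^{op}$.

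The paper asserts that $\bar A$ is representation-finite without comment. Your Step~1 supplies the missing justification: if ${\rm ann}T=0$, Proposition~\ref{2.12} makes $T$ tilting over $A$, so $\bar A$ must be a proper factor algebra. That observation alone suffices, so it is less of an obstacle than you suggest.

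Your routes (a) and (b) instead feed the representation-finiteness of $\bar A$ into the paper's own delooping machinery. Since ${\rm Fac}T\subseteq{\rm mod}\bar A$ has only finitely many indecomposables, the corollary is a special case of Corollary~\ref{5.4}. That is, it follows from Theorem~\ref{5.3}, Lemmas~\ref{5.1}--\ref{5.2}, Theorem~\ref{4.5} and Gelinas' bound.

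Each route buys something different. Yours avoids derived equivalences altogether and yields the bound ${\rm findim}B^{op}\leq{\rm dell}_T\mathcal{C}<\infty$ in terms of $T$. The paper's route is independent of Sections~4--5 and comes with the two-sided equivalence of Proposition~\ref{5.6}. Both rest on the same input: $\bar A$ is a proper, hence representation-finite, factor algebra of $A$.
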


\begin{proof} By Proposition \ref{2.12}, $T$ is a tilting module in ${\rm mod}\bar{A}$ and is also a tilting module in ${\rm mod}B^{op}$. Since $\bar{A}$ is of finite representation type, one gets that $\bar{A}^{op}$ is of finite representation type, and hence ${\rm findim}\bar {A}^{op}<\infty$. By Proposition \ref{5.6}, the assertion holds.
\end{proof}

We end this section with the following example.

\begin{exm}\label{5.8} Let $A$ be an algebra given by the quiver \[\xymatrix{
1\ar@<2pt>[r]^{\alpha}&2\ar@<2pt>[l]^{\beta}
}\]  with the relations $\alpha\beta=\beta\alpha=0$. Then
\begin{itemize}
\item[(1)] $A$ is a self-injective Nakayama algebra and ${\rm findim}A=0$.
\item[(2)] There are $5$ non-zero support $\tau$-tilting modules in ${\rm mod}A$ as follows:
$U_1=\substack{1\\2}\oplus\substack{2\\1}=A$, $U_2=\substack{2}\oplus\substack{2\\1}$, $U_3=\substack{1\\2}\oplus\substack{1}$, $U_4=\substack{2}$ and $U_5=\substack{1}$.
\item[(3)] Denote by $B_i={\rm End}_A U_i$ for $1\leq i\leq 5$. Then one gets that  ${\rm findim }B_1^{op}=0$, ${\rm findim} B_2^{op}=1$, ${\rm findim }B_3^{op}=1$, ${\rm findim }B_4^{op}=0$ and ${\rm findim} B_5^{op}=0$.
\end{itemize}
\end{exm}

\vskip 10pt

\noindent{\bf Acknowledgements}  Both of the authors are supported by the National Natural Science Foundation of China (Nos. 12171207 and  12371038). The authors also want to thank Professors Xiao-Wu Chen, Zhaoyong Huang, Zhi-Wei Li, Shengyong Pan and Zhibing Zhao for useful discussion and suggestions. They also thank Professor Lei Chen for the help with latex.

\bibliography{}

\begin{thebibliography}{9999}

\bibitem {A} {\sc T. Adachi}, {\em The classification of $\tau$-tilting modules over Nakayama
algebras}, J. Algebra, 452(2016), 227-262.

\bibitem{AIR} {\sc T. Adachi,  O. Iyama and I. Reiten}, {\em  $\tau$-tilting theory}, Compos. Math., {150} (3) (2014), 415-452.

\bibitem {AnMV} {\sc L. Angeleri H\"{u}gel, F. Marks and J. Vit\'oria}, {\em  Silting
modules}, Int. Math. Res. Not. IMRN, 4(2016), 1251-1284.


\bibitem{As}  {\sc S. Asai}, {\em Semibricks}, Int. Math. Res. Not. IMRN, 16(2020), 4993-5054.

\bibitem{ASS} {\sc I. Assem, D. Simson and A. Skowronski}, {\em Elements of the Representation Theory of Associative Algebras}, Volume 1. Techniques of Representation Theory, London Math. Soc. Student Texts {\bf 65}, Cambridge Univ. Press, 2006.
\bibitem{AuB}  {\sc  M. Auslander and D. A. Buchsbaum}, {\em Homological dimension in local rings}, Trans. Amer. Math. Soc., {85}(3)(1957), 390-405.
\bibitem{C} {\sc L. Chen}, {\em Derived equivalence and delooping level}, preprint at: arXiv: 2603.12974.
\bibitem{CLZZ} {\sc X.-W. Chen, Z.-W. Li, X. J. Zhang and Z. B. Zhao}, {\em Comparing $\tau$-tilting modules and 1-tilting modules}, preprint at: arXiv:2501.02466.
\bibitem{CH} {\sc Y. J. Chen and W. Hu}, {\em On the upper bound of finitistic dimension,} Master thesis in Beijing Normal University (in Chinese), 2025.
\bibitem {DIJ} {\sc L. Demonet, O. Iyama and G. Jasso}, {\em $\tau$-tilting finite algebras, bricks and $g$-vectors}, Int. Math. Res. Not. IMRN, 3(2019), 852-892.
\bibitem{G} {\sc V. Gelinas}, {\em The depth, the delooping level and the finitistic dimension}, Adv. Math., {394}   (2022), 108052.

\bibitem{GPS} {\sc E. L. Green, C. Psaroudakis and $\o$. Solberg}, {\em Reduction techniques for the finitistic dimension},  Trans. Amer. Math. Soc., 374(2021), no.10, 6839 - 6879.

\bibitem{Gu} {\sc R. Y. Guo}, {\em Symmetry of derived delooping level}, Algebr. Represent. Theory, 28(2025), no. 4, 1125-1137.

\bibitem{GuI} {\sc  R. Y. Guo and K. Igusa}, {\em Derived delooping levels and finitistic dimension}, Adv. Math., {464} (2025), 110152.

\bibitem{HR} {\sc D. Happel and C. M. Ringel}, {\em Tilted algebras}, Trans. Amer. Math. Soc., {274} (2) (1982), 399-443.

\bibitem{IT} {\sc K. Igusa and G. Todorov}, {\em On the finitistic global dimension conjecture for Artin algebras}, Inst. Commun.,45
American Mathematical Society,Providence, RI, 2005, 201-204.

\bibitem{IZ} {\sc O. Iyama and X. J. Zhang}, {\em Classifying $\tau$-tilting modules over the Auslander algebra of $K[x]/(x^{n})$ }, J. Math. Soc. Japan, 72(3) (2020), 731-764

\bibitem{KR}  {\sc L. Kershaw and J. Rickard},  {\em A finite dimensional algebra with infinite delooping level,}
Ann. Represent. Theory, 1(2024), no. 1, 61 - 65.

\bibitem{K} {\sc H. Krause}, {\em On the symmetry of the finitistic dimension}, C. R. Math. Acad. Sci. Paris, 361(2023), 1449-1453.

\bibitem{M} {\sc Y. Mizuno}, {\em  Classifying $\tau$-tilting modules over preprojective algebras
of Dynkin type,} Math. Zeit., {277}(3)(2014), 665-690.

\bibitem{PX}{\sc S. Y. Pan and C. C. Xi}, {\em Finiteness of finitistic dimension is invariant under derived equivalences},
J. Algebra, 322(2009), no. 1, 21-24.

\bibitem{R}{\sc J. Rickard}, {\em Derived equivalences as derived functors}, J. London Math. Soc., (2)43(1991), no. 1, 37-48.

\bibitem{Ri} {\sc C. M. Ringel}, {\em Brick chain filtrations}, preprint at: arXiv: 2411.18427.

\bibitem{S} {\sc E. Sen}, {\em Delooping level of Nakayama algebras,} Arch. Math. (Basel), 117(2021), no.2, 141-146.

\bibitem{Xi} {\sc C. C. Xi}, {\em The relative Auslander-Reiten theory of modules,} preprint at: \url{https://www.wemath.cn/~ccxi/}.

\bibitem{Xi2} {\sc C. C. Xi},  {\em On the finitistic dimension conjecture II. Related to finite global dimension}, Adv. Math., 201(2006), no.1, 116-142.

\bibitem{Z} {\sc X. J. Zhang}, {\em Self-orthogonal $\tau$-tilting modules and tilting modules,} J. Pure. Appl. Algebra, {226} (2022), 106860.

\end{thebibliography}

\vskip 10pt

{\footnotesize \noindent Mingfei Xu and Xiaojin Zhang\\
School of Mathematics and Statistics, Jiangsu Normal University, Xuzhou 221116, Jiangsu, PR China.\\}



\end{document}